\newtheorem{theorem}{Theorem}[section]
\newtheorem{lemma}[theorem]{Lemma}
\newtheorem{proposition}[theorem]{Proposition}
\newtheorem{corollary}[theorem]{Corollary} 
\theoremstyle{definition}  
\newtheorem{definition}[theorem]{Definition}
\newtheorem{example}[theorem]{Example}
\newtheorem{conjecture}[theorem]{Conjecture}  
\newtheorem{remark}[theorem]{Remark}
\newtheorem{note}[theorem]{Note}
\newcommand{\id}{\text{id}}
\newcommand{\Fun}{\text{Fun}}
\renewcommand{\Vec}{\text{Vec}}
\newcommand{\Pic}{\text{Pic}}
\newcommand{\BrPic}{\text{BrPic}}
\newcommand{\Hom}{\text{Hom}}
\newcommand{\Aut}{\text{Aut}}
\newcommand{\Rep}{\text{Rep}}
\newcommand{\rev}{\text{rev}}
\newcommand{\op}{\text{op}}
\newcommand{\B}{\mathcal{B}}
\newcommand{\C}{\mathcal{C}}
\newcommand{\D}{\mathcal{D}}
\newcommand{\Z}{\mathcal{Z}}
\newcommand{\M}{\mathcal{M}}
\newcommand{\A}{\mathcal{A}}
\newcommand{\N}{\mathcal{N}}
\newcommand{\be}{\mathbf{1}}
\newcommand{\la}{\langle\,} 
\newcommand{\ra}{\,\rangle}
\renewcommand{\be}{\mathbf{1}}
\newcommand{\cC}{{\mathcal C}}
\newcommand{\bt}{\boxtimes}
\newcommand{\ot}{\otimes}
\newcommand{\beq}{\begin{equation}}
\newcommand{\eeq}{\end{equation}}
\newcommand{\lb}{\label}
\newcommand{\bpf}{\begin{proof}}
\newcommand{\epf}{\end{proof}}
\newcommand{\bth}{\begin{theorem}}
\renewcommand{\eth}{\end{theorem}}
\newcommand{\bpr}{\begin{proposition}}
\newcommand{\epr}{\end{proposition}}
\newcommand{\ble}{\begin{lemma}}
\newcommand{\ele}{\end{lemma}}
\newcommand{\bco}{\begin{corollary}}
\newcommand{\eco}{\end{corollary}}
\newcommand{\bde}{\begin{definition}}
\newcommand{\ede}{\end{definition}}
\newcommand{\bex}{\begin{example}}
\newcommand{\eex}{\end{example}}
\newcommand{\bre}{\begin{remark}}
\newcommand{\ere}{\end{remark}}
\newcommand{\bcj}{\begin{conjecture}}
\newcommand{\ecj}{\end{conjecture}}
\newcommand{\End}{\text{End}}
\newcommand{\opp}{\text{op}}
\begin{document}
\title{The Picard crossed module of a braided tensor category}
\author{Alexei Davydov}
\address{A.D.: Department of Mathematics and Statistics,
University of New Hampshire,  Durham, NH 03824, USA}
\email{alexei1davydov@gmail.com}
\author{Dmitri Nikshych}
\address{D.N.: Department of Mathematics and Statistics,
University of New Hampshire,  Durham, NH 03824, USA}
\email{nikshych@math.unh.edu}

\date{\today}
\begin{abstract}
For a finite braided tensor category $\C$ we introduce its {\em Picard crossed module} $\mathfrak{P}(\C)$  consisting of
the group of invertible $\C$-module categories and the group of braided tensor autoequivalences of $\C$.
We describe $\mathfrak{P}(\C)$ in terms of braided autoequivalences of the Drinfeld center of $\C$. 
As an illustration, we compute the Picard crossed module of a braided pointed fusion category.
\end{abstract} 

\maketitle

\section{Introduction}

Tensor categories can be thought of as categorical analogues  of associative algebras.
One  can adapt   standard notions and constructions of the classical theory of associative algebras
to tensor categories.  Analogues of (bi-)modules over algebras are {\em (bi-)module categories}
over tensor categories \cite{qu, jk, O1}.

Given an algebra $C$  the isomorphism classes of  invertible $C$-bimodules
form a group $\BrPic(C)$ called the {\em Brauer-Picard} group of $C$.  There is a well known
homomorphism
\begin{equation}
\label{classical phi}
\phi: \BrPic(\C)\to \Aut(Z(C)),
\end{equation}
where $Z(C)$ denotes the center of $C$, constructed  as follows. Given an invertible $C$-bimodule $M$
and $z\in Z(C)$,  the element $\phi(M)(z)\in Z(C)$ is defined by the condition that the endomorphism 
of $M$ given by the left multiplication
by $\phi(M)(z)$ equals to that given by the right multiplication by $z$. 

There is an analogue of homomorphism \eqref{classical phi} for tensor categories.  Given a finite tensor category $\C$
one defines its  {\em Brauer-Picard} group $\BrPic(\C)$  of equivalence classes of invertible $\C$-bimodule categories 
(see \cite{ENO}) and 
a homomorphism
\begin{equation}
\label{categorical Phi}
\Phi: \BrPic(\C) \to  \Aut^{br}(\Z(\C)),
\end{equation}
where $\Z(\C)$ is the {\em Drinfeld center} of $\C$ and $\Aut^{br}(\Z(\C))$ is the group of braided
autoequivalences of $\Z(\C)$. 

It was shown in \cite{ENO} that \eqref{categorical Phi} is an isomorphism when $\C$  is a fusion category. 

Braided tensor categories are analogues of  commutative algebras.  Similarly to the classical case,
module categories over a braided tensor category $\C$ can be regarded as bimodule categories. 
In this case the group $\BrPic(\C)$ contains a subgroup $\Pic(\C)$, called the {\em Picard group} of $\C$,
consisting of invertible $\C$-module categories \cite{ENO}. One defines a homomorphism
\begin{equation}
\label{categorical partial}
\partial: \Pic(\C) \to  \Aut^{br}(\C),
\end{equation}
in a way parallel to \eqref{categorical Phi}.  Note that the classical analogue of
\eqref{categorical partial} for commutative algebras is trivial.  But, in general, $\partial$ 
is far from being trivial. It was shown in \cite{ENO} that it is an isomorphism for every non-degenerate 
braided fusion category~$\C$. 

Groups $\Pic(\C)$ and $\Aut^{br}(\C)$ play important role in the theory of braided tensor categories.
In particular, they are used in the classification of group extensions of fusion categories \cite{ENO}.
They also appear as parts of an important invariant of $\C$ called the {\em core}  studied
in \cite{DGNO}.  We thus hope that our description of the algebraic structure formed by these
groups will shed more light on the above constructions.

Below  is the summary of our results. 

The starting point of this paper
is a conjecture of V.~Drinfeld that for a braided tensor category $\C$
the pair $\mathfrak{P}(\C)= (\Pic(\C),\, \Aut^{br}(\C))$ along with the homomorphism 
\eqref{categorical partial} and the natural action   of $\Aut^{br}(\C)$ on $\Pic(\C)$ is a {\em crossed module}, called the 
{\em Picard crossed module} of $\C$.  See Section~\ref{def Pic cr mod}
for the definition of a crossed module and \cite{JS}, \cite[Appendix E.5.3]{DGNO} for an interpretation
of crossed modules in terms of monoidal categories. We prove this conjecture in Theorem~\ref{crmodthm}.

For a finite tensor category  $\C$ 
we define its Brauer-Picard group $\BrPic(\C)$  as the group of equivalence classes of invertible {\em exact}
$\C$-bimodule  categories.
We prove in Theorem~\ref{BrPic=Autbr theorem} that the canonical homomorphism \eqref{categorical Phi}
is an isomorphism. This  extends the corresponding result for fusion categories proved in \cite{ENO}. 

Next, for a braided finite tensor category $\C$ we show in Theorem~\ref{Drinfeld's Theorem} that the
image of $\Pic(\C)\subset \BrPic(\C)$ under the isomorphism
\eqref{categorical Phi} is the subgroup of braided autoequivalences of $\Z(\C)$
trivializable on $\C$.   

Finally, we  explicitly compute 
the Picard crossed module of a pointed braided fusion category in Section~\ref{pointed example}.
It turns out that the Picard groups of pointed braided fusion categories interpolate
between  the orthogonal groups of quadratic forms and the exterior squares of finite Abelian groups.  

The paper is organized as follows.

Section~\ref{prelim} contains basic facts about finite tensor categories and module categories over them. 
Here we also define  the Brauer-Picard group
of a finite tensor category  and the Picard group of a finite braided tensor category. (They were previously
defined in \cite{ENO} in the setting of fusion categories).

In Section~\ref{Picard section}  we introduce the Picard crossed module of a braided tensor category.

In Section~\ref{section braided aut} we prove our main Theorems~\ref{BrPic=Autbr theorem} and
\ref{Drinfeld's Theorem} and describe the Picard crossed module  of a braided tensor category in terms of braided autoequivalences
of its center.

Section~\ref{pointed example} is devoted to the computation of  the Picard crossed module of a pointed braided 
fusion category and its invariants.

\textbf{Acknowledgment.}
We are deeply grateful V. Drinfeld. The statements of Theorems~\ref{crmodthm} and
\ref{Drinfeld's Theorem}  are due to him.  We also thank  J.~Cuadra and V.~Ostrik for valuable comments.
The work of the second named author  was partially supported  by the NSF grant DMS-0800545.

\section{Prelimimaries}
\label{prelim}

\subsection{General conventions}

We work over  an algebraically closed field $k$ of characteristic zero.
Recall that a $k$-linear abelian category $\C$ is {\em finite} if 
 \begin{enumerate}
\item[(i)] $\C$ has finite dimensional spaces of morphisms;
\item[(ii)] every object of $\C$ has finite length;
\item[(iii)] $\C$ has {\em enough projectives}, i.e., every simple object of $\C$ has a projective cover; and
\item[(iv)] there are finitely many isomorphism classes of simple objects in $\C$.
\end{enumerate}
All abelian categories considered in this paper will be  finite.
Any such category  is equivalent to the category $\Rep(A)$ of finite dimensional representations of  
a finite dimensional  $k$-algebra $A$.  All functors between such categories will be additive and $k$-linear.
We use 
the symbol $\simeq$ for equivalence between categories  and the symbol $\cong$ for isomorphisms between objects.

In this paper we freely use basic results of the theory of finite tensor categories and module categories
over them \cite{BK, EO, O1} and  the theory of braided  categories \cite{JS, DGNO}. 

\subsection{Tensor categories}

By a {\em tensor category} we mean a  finite rigid tensor category $\A$ whose  unit object $\be$ is simple \cite{EO}. 
A semisimple tensor category is called a {\em fusion} category.

Let $\A$ be a tensor category with the associativity constraint 
\[
a_{X,Y,Z}:(X\ot Y)\ot Z \xrightarrow{\simeq} X \ot (Y \ot Z). 
\]
The tensor category with the opposite tensor product
$X\otimes^{\opp}Y := Y\otimes X$ and the accordingly adjusted associativity constrain $a^{\opp}$:
$$
\xymatrix{(X\otimes^{\opp}Y)\otimes^{\opp}Z \ar@{=}[d] \ar[rr]^{a^{\opp}_{X,Y,Z}} && X\otimes^{\opp}(Y\otimes^{\opp}Z)  \ar@{=}[d] \\
Z\otimes (Y\ot X) \ar[rr]^{a_{Z,Y,X}^{-1}} && (Z\ot Y)\ot X}
$$
will be called the category {\em opposite} to $\A$ and will be denoted $\A^{\opp}$. 

Let $\A$ and $\B$ be tensor categories. Their  {\em Deligne} tensor product \cite{D} will be denoted $\A\boxtimes\B$. 

\begin{definition}
\label{trivializable def}
Let $\A$ be a tensor category and let $\mathcal{B}\subset \A$ be a tensor subcategory.
A tensor autoequivalence $\alpha$ of $\A$   is called {\em trivializable} on $\mathcal{B}$ if the restriction
$\alpha |_\mathcal{B}$ is isomorphic to $\id_\mathcal{B}$ as a tensor functor.
\end{definition}

We will denote by $\Aut(\A)$ (respectively, $\Aut(\A,\, \mathcal{B})$) the group of isomorphism classes 
of tensor autoequivalences of $\A$ (respectively, tensor autoequivalences of $\A$ trivializable on $\mathcal{B}$).

\subsection{Braided tensor categories}

Recall that a {\em braided} tensor category $\C$ is a finite tensor category equipped with a natural 
isomorphism 
\[
c_{X,Y}: X \ot Y \xrightarrow{\simeq} Y \ot X
\] 
satisfying the hexagon axioms \cite{JS}. 
The braiding of $\C$ gives rise to a tensor  equivalence between  $\C$ and $\C^{\opp}$.

An important example of a braided tensor category  is the {\em center} $\Z(\A)$ of a finite tensor category $\A$.
It is defined as the category whose objects  are pairs $(Z,\, \gamma)$, where $X$ is an object of $\A$ and $\gamma$
is a natural family of isomorphisms, called {\em half braidings}:
\[
\gamma_{X}: X \ot Z \xrightarrow{\simeq}  Z \ot X,\qquad X\in \A,
\]
satisfying compatibility conditions.  The center is a finite braided tensor category
with the braiding given by
\[
\delta_{Z} : (Z,\, \gamma) \ot (Y,\, \delta) \xrightarrow{\simeq}  (Y,\, \delta)  \ot (Z,\, \gamma). 
\]

Let $\C^\rev$ denote the  tensor category  $\C$ equipped with  the reversed braiding
\[
\tilde{c}_{X,Y} =c_{Y,X}^{-1}.
\] 
For a braided tensor category $\C$ there are canonical embeddings $\C \hookrightarrow \Z(\C)$
and $\C^\rev  \hookrightarrow \Z(\C)$
given by 
\begin{equation}
\label{embeddings of C to Z(C)} 
X \mapsto  (X,\, c_{-,X}) \qquad \mbox{and} \qquad X \mapsto (X,\, \tilde{c}_{-,X}). 
\end{equation}

For a braided tensor category $\C$  the embeddings \eqref{embeddings of C to Z(C)}  combine
into a single braided tensor functor
\begin{equation}
\label{functor G} \C \bt \C^\rev \to \Z(\C).
\end{equation}
A braided tensor category $\C$ is called {\em factorizable} if the functor \eqref{functor G} is an equivalence. 

We will denote by $\Aut^{br}(\C)$ the group of isomorphism classes of braided tensor autoequivalences of a braided tensor category $\C$. 

\begin{example}
\label{quad form}
Let $\C$ be a pointed braided fusion category. Then isomorphism classes of  simple
objects of $\C$ form a finite Abelian group $A$. 

The associativity constraint of $\C$ determines a  $3$-cocycle $\omega: A\times A \times A \to k^\times$.
The braiding determines a function 
\begin{equation}
\label{function c}
c:  A\times A \to k^\times
\end{equation}
satisfying the following identities
coming from the hexagon axioms of braided tensor category:
\begin{eqnarray}
\label{ab cocycle A}
c(x,\, y+z)c(x,\,y)^{-1}c(x,\,z)^{-1} &=&
\omega(x,\,y,\,z)\omega(y,\,x,\,z)^{-1}\omega(y,\,z,\,x), \\
\label{ab cocycle B}
c(x+y,\,z)c(x,\,z)^{-1}c(y,\,z)^{-1} &=&
\omega(x,\,y,\,z)^{-1}\omega(x,\,z,\,y)\omega(z,\,x,\,y)^{-1},
\end{eqnarray}
for all $x,\,y,\,z\in A$. 
Following \cite{em} denote the set of pairs $(\omega,c)$ satisfying the equations (\ref{ab cocycle A}) 
by $Z^3_{ab}(A,\,k^\times)$. Note that $Z^3_{ab}(A,\,k^\times)$ is a group with respect to pointwise multiplication.

A tensor functor $F:\cC\to \cC'$ gives rise to a group homomorphism $f:A\to A'$. The tensor structure of $F$
gives rise to a map $\phi: A \times A\to k^\times$. The coherence axiom for the tensor  structure 
becomes the  2-coboudary condition:
\begin{equation}\label{cob}
\phi(y,\,z)\phi(x+y,\,z)^{-1}\phi(x,\,y+z)^{-1}\phi(x,\,y)^{-1} = \omega(x,\,y,\,z)\omega'(f(x),f(y),f(z))^{-1},  
\end{equation}
for all $x,\,y,\,z\in A$. Here
$\omega,\,  \omega'$ are the associativity constraints in $\cC,\, \cC'$ respectively. The tensor functor $F$ is braided if
\begin{equation}\label{bra}
c(x,\,y)c'(f(x),\,f(y))^{-1} = \phi(x,\,y)\phi(y,\,x)^{-1}.
\end{equation}

Tensor autoequivalences isomorphic to the identity functor (identity $f$) define an equivalence relation on the group
of pairs $(\omega,c)$, where $(\omega,c)$ and $(\omega',c')$ are related as in (\ref{cob},\ref{bra}) with trivial $f$. The
quotient group is known as the {\em third abelian cohomology} $H^3_{ab}(A,\,k^\times)$ \cite{em}.

The function 
\[
q(x):= c(x,\,x),\quad x\in A,
\] is a {\em quadratic form} on $A$, i.e., $q(-x)=q(x)$ and
the symmetric function 
\begin{equation}
\label{bimult sigma}
\sigma(x,\,y) = \frac{q(x+y)}{q(a)q(b)},\qquad  x, y\in A
\end{equation}
is bimultiplicative. We have the identity
\begin{equation}
\label{cc = sigma}
\sigma(x,\,y) =  c(x,\,y) \, c(y,\,x),\qquad x,\,y\in A. 
\end{equation}

It was proved in \cite{em} that the map $(\omega,c)\mapsto q$ defines an isomorphism between  $H^3_{ab}(A,k^*)$ and the group of quadratic forms $A\to k^\times$.

By associating to $\C$ the pair
$(A,\, q)$ one gets a functor from the 1-categorical contraction of the 2-category of pointed
braided fusion categories to the category of {\em pre-metric groups}. Objects of the latter category are 
finite abelian groups equipped with a quadratic forms and morphisms are group homomorphisms
preserving the quadratic forms (i.e., orthogonal homomorphisms).

It was proved by Joyal and Street 
\cite{JS} that the above functor is an equivalence (see also \cite[Appendix D]{DGNO}). 
The braided fusion category associated to $(A,\, q)$ will be denoted $\C(A,\, q)$. 

It follows from the above that
\[
\Aut^{br}(\C(A,\, q)) = O(A,\, q),
\]
where $O(A,\,q)$ denotes the group of orthogonal  automorphisms of $(A,\,q)$, 
i.e., auotomorphisms $\alpha: A\to A$ such that $q\circ \alpha = q$.

\end{example}

\subsection{Centralizers in braided tensor categories}
\label{Prelim centralizers}

In \cite{M1} M.~M\"uger introduced the following definition.

\begin{definition}
Objects $X$ and $Y$ of a braided tensor category $\C$  are said to {\em centralize} each other if
\[
c_{Y,X} c_{X,Y} =\id_{X \ot Y}. 
\]
The {\em centralizer} $\D'$ of a tensor subcategory $\D \subset \C$ is defined to be the full subcategory
of objects of $\C$ that centralize each object of $\D$. It is easy to see that $\D'$ is a tensor subcategory. 
\end{definition}

We will denote the self-centralizer $\C'$ of $\C$ by $\Z_{sym}(\C)$ and call it  the {\em symmetric}  center of $\C$.
We say that $\C$ is {\em non-degenerate}  if and only if $\Z_{sym}(\C)$ is trivial, i.e., consists of extensions of the unit
object $\be$. 

\begin{remark}
It was shown in \cite[Proposition 3.7]{DGNO} that a braided tensor category $\C$ is non-degenerate
if and only it is factorizable. 
\end{remark}

Let $\C$ be a braided tensor category. Let us identify $\C$ and $\C^\rev$ with their images in $\Z(\C)$
under the embeddings \eqref {embeddings of C to Z(C)}. Then $\C$ and  $\C^\rev$ are centralizers of each other.  

\begin{example}
\label{centralizers in C(A,q)}
Let us describe the centralizers in the pointed braided fusion category $\C(A,\, q)$, see Example~\ref{quad form}.
Two simple objects $x,\, y\in A$ of this category centralize each other if and only if $\sigma(x,\, y)=1$, where
$\sigma$ is the bimultiplicative symmetric function  \eqref{bimult sigma} corresponding to $q$. That is,
in this case the centralizing property coincides with orthogonality. 

Every fusion subcategory of $\C(A,\, q)$ corresponds to a subgroup $B \subset A$ and is equivalent to $\C(B,\, q|_B)$.
We have $\C(B,\, q|_B)' =\C(B^\perp,\, q_|{B^\perp})$, where $B^\perp$ is the subgroup of $A$ orthogonal to $B$. 
In particular,
\[
\Z_{sym}(\C(A,\,q)) = \C(A^\perp,\,q|_{A^\perp}),
\]
where  $A^\perp = \{a\in A|\ \sigma(a,b)=1\ \forall b\in A\}$ the kernel of $\sigma$. 
The category $\C(A,\, q)$ is non-degenerate if and only if $\sigma$ is non-degenerate. 
\end{example}

\subsection{Module categories over tensor categories}
\label{mcmc}

Let $\A$ be a finite tensor category.
A left {\em $\A$-module} category (see \cite{qu,jk,O1}) is a finite category $\M$ 
together with a bifunctor  
$$
\A\times\M\to\M,\quad (X,M)\mapsto X*M
$$
equipped with  a functorial  isomorphism called the {\em associativity  constraint}:
\[
a_{X,Y,M}: X * (Y * M) \xrightarrow{\simeq} (X \ot Y)* M,\qquad X,\,Y\in \A,\, M \in \M,
\]
and the unit constraint satisfying natural compatibility axioms. 

Equivalently,  $\M$ is a left module category over $\A$ if there is given a tensor functor $\A\to\End(\M)$ 
to the tensor category $\End(\M)$ of endofunctors of $\M$ (with tensor structure given by composition of functors). 

A right $\A$-module category is defined in a similar way. It corresponds to a tensor functor $\A^{\opp}\to\End(\M)$. 
For a right $\A$-module category $\M$ the category  
obtained from $\M$ reversing the directions of morphisms is a left $\A$-module category  via
\[
X \odot M = M*X^* ,\qquad M\in \M,\, X\in \A. 
\]
We will denote this category $\M^{\opp}$ and call it the {\em opposite} module category. 

Functors between $\A$-module categories and natural transformations between them are defined in  
an obvious way, see \cite{O1}.  

Let $\A$ be a tensor category. Following \cite{EO} we say that an $\A$-module category $\M$ is {\em exact}
if for any projective object $P$ of $\A$ and every object $M$ of $\M$ the object $P\ot M$ is projective. 
An $\A$-module category $\M$ is exact if and only if for every $\C$-module category $\N$ any $\C$-module  
functor  $\M\to \N$ is exact.

\begin{example}
If $\A$ is  a fusion category then an $\A$-module category is exact if and only if it is semisimple. 
\end{example}

\begin{note}
All module categories in this paper are assumed to be exact. 
\end{note}

Given an indecomposable  left $\A$-module category $\M$ the {\em dual} category of $\A$ with respect to $\M$
is the category $\A^*_\M=\Fun_\A(\M,\, \M)$ of $\A$-module endofunctors of $\M$. It was shown in
\cite[Section 3.3]{EO} that  $\A^*_\M$ is a finite tensor category.  Furthermore, $\M$ is an exact indecomposable 
left $\A^*_\M$-module  category and there is a canonical tensor equivalence $\A \cong (\A^*_\M)^*_\M$.

\begin{remark}
\label{great EO}
It was proved in \cite[Theorem 3.31]{EO} that
the assignment 
\[
\N \mapsto  \Fun_\A(\M,\, \N)
\]
is an equivalence between the $2$-category of
exact left $\A$-module categories  and that of exact right $\A^*_\M$-module categories.
\end{remark}

\subsection{Bimodule categories}

Let $\A,\, \B$ be  tensor categories. 

By definition, an {\em $(\A-\B)$-bimodule} category  $\M$ is
an $(\A \bt \B^{\opp})$-module category. 

Equivalently,  a category $\M$ is an $(\A-\B)$-bimodule category 
if it has left $\A$-module and right $\B$-module category structures compatible by a collection of isomorphisms
$a_{X,M,Y}:X*(M*Y)\to (X*M)*Y$  called {\em middle  associativity constraints}
natural in $X\in\A,\, Y\in\B,\,M\in \M,$ and such that the following diagrams 
$$\xygraph{ !{0;/r4.5pc/:;/u4.5pc/::}[]*+{X*(Y*(M*Z))} (
  :[u(.7)r(1.5)]*+{(X\otimes Y)*(M*Z)} ^{a_{X,Y,M*Z}}
  :[d(.7)r(1.5)]*+{((X\otimes Y)*M)*Z}="r" ^{a_{X\ot Y,M,Z}}
  ,
  :[r(.5)d(.8)]*+!R(.3){X*((Y*M)*Z))} _{1*a_{Y,M,Z}}
  :[r(2)]*+!L(.3){(X*(Y*M))*Z} _{a_{X,Y*M,Z}}
  : "r" _{a_{X,Y,M}*1}
)
}$$
$$\xygraph{ !{0;/r4.5pc/:;/u4.5pc/::}[]*+{X*(M*(Z\ot W))} (
  :[u(.7)r(1.5)]*+{(X*M)*(Z\ot W)} ^{a_{X,M,Z\ot W}}
  :[d(.7)r(1.5)]*+{((X*M)*Z)*W}="r" ^{a_{X*M,Z,W}}
  ,
  :[r(.5)d(.8)]*+!R(.3){X*((M*Z)*W)} _{1*a_{M,Z,W}}
  :[r(2)]*+!L(.3){(X*(M*Z))*W} _{a_{X,M*Z,W}}
  : "r" _{a_{X,M,Z}*1}
)
}$$
commute for all $X,\,Y\in\A,\, Z,\,W\in\B$, and $M\in\M$.

\begin{example}
A left  $\A$-module category $\M$ has a  
structure of an $(\A - (\A^*_\M)^{\opp})$-bimodule category.
\end{example}

\subsection{Tensor product of module categories and the Brauer-Picard group  of a tensor category }

Let $\A$ be a finite tensor category, let $\M$ be a right $\A$-module category, and let $\N$ be a left $\A$-module category.
The $\A$-module tensor product of $\M$ and $\N$  was defined in \cite[Section 3.1]{ENO}.  Let us recall this definition.
A bifunctor $F: \M \times \N \to \mathcal{K}$, where $\mathcal{K}$ is an abelian category  is called {\em $\A$-balanced} 
if there exists a family of isomorphisms $F(M\ot X,\, N) \xrightarrow{\simeq} F(M,\, X\ot N)$ natural in $M\in \M,\, N\in \N$,
and $X\in \A$ satsfying coherence axioms. Let $\Fun_{bal, re}(\M\times \N,\, \mathcal{K})$ denote
the category of $\A$-balanced functors from $\M \times   \N$   to $\mathcal{K}$ right exact in each variable. 
  
The $\A$-module tensor product of $\M$ and $\N$ is an Abelian category $\M \bt_\A \N$ together with 
$\C$-balanced bifunctor 
\[
B_{\M,\, \N}: \M \times \N \to \M \bt_\A \N
\]  
which is right exact in each variable and 
for every  abelian category $\mathcal{K}$ induces an equivalence 
\[
\Fun_{bal, re}(\M\times \N,\, \A) \simeq \Fun_{re}(\M \bt_\A \N,\, \mathcal{K}). 
\]
Here and below the subscript $re$ indicates that functors  under consideration are right exact. 
The existence of $\A$-module tensor product  was established  in  \cite[Section 3.2]{ENO}. Namely, it was shown that
\begin{equation}
\label{btA = Fun}
\M\bt_\A \N \simeq \Fun_{\A,re}(\M^{\opp},\, \N). 
\end{equation}
Note  that although  the categories considered in \cite{ENO} were assumed
to be semisimple the proof of this particular result does not use semisimplicity.  Indeed, first observe that $\M \bt \N$ is equivalent to
$\Fun_{re}(\M^{\opp},\, \N)$, since for $\M =\Rep(A)$ and $\N =\Rep(B)$, where $A$ and $B$ are algebras,   
both categories are identified with $\Rep(A\ot B)$. Next, by \cite[Proposition 3.5]{ENO} every balanced  
bifunctor $\M \times \N \to \mathcal{K}$ that is right  exact in every variable canonically factors through the functor
\[
\M \bt \N \simeq \Fun_{re}(\M^{\opp},\, \N) \xrightarrow{B_{\M,\N}}  \Fun_{\A,re}(\M^{\opp},\, \N),
\]
where ${B_{\M,\N}}$ is the left adjoint to the forgetful functor  
\[
\Fun_{\A,re}(\M^{\opp},\, \N) \to \Fun_{re}(\M^{\opp},\, \N).
\] 

Furthermore, if $\M$ and $\N$ are $\A$-bimodule categories then so is $\M \bt_\A \N$ (the $\A$-bimodule structure
on $\M \bt_\A \N$  is induced by the $\A$-bimodule structure on $\M \bt \N$). 

\begin{proposition}
Let $\M$ and $\N$ be  exact $\A$-bimodule categories. Then $\M \bt_\A \N$ is an exact $\A$-bimodule category.
\end{proposition}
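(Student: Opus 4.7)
I would start from the identification $\M\bt_\A \N \simeq \Fun_{\A,re}(\M^\opp,\, \N)$ of equation~(\ref{btA = Fun}). Because $\M$ is exact as an $\A$-bimodule, $\M^\opp$ is exact as a left $\A$-module, and hence every right exact $\A$-module functor out of $\M^\opp$ is automatically exact by the standard characterization of exact module categories from \cite{EO}. Thus $\Fun_{\A,re}(\M^\opp,\, \N)=\Fun_\A(\M^\opp,\, \N)$, which by Remark~\ref{great EO} is a finite abelian category carrying an exact right $\A^*_{\M^\opp}$-module structure.

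Next I would identify the $\A$-bimodule structure on $\M\bt_\A \N$ induced by the bimodule structures on $\M$ and $\N$, and observe that both the left and the right $\A$-actions factor through the exact right $\A^*_{\M^\opp}$-action on $\Fun_\A(\M^\opp,\, \N)$ via tensor functors $\A^\opp\to\A^*_{\M^\opp}$ and $\A\to\A^*_{\M^\opp}$ built out of the bimodule data of $\M$ and $\N$ respectively. For example, the left $\A$-action takes the form $(X\lact F)(M)=F(M\ract X)$, where we use the remaining right $\A$-action on $\M^\opp$ that comes from the unused left $\A$-action on $\M$ via the bimodule structure; a routine check with middle associativity shows that this commutes with the right $\A$-action $(F\ract Y)(M)=F(M)\ract Y$.

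Exactness then reduces to the statement that these two tensor functors send projectives of $\A$ to projectives of $\A^*_{\M^\opp}$. I expect this projective-preservation step to be the main technical obstacle. It should follow from the exactness hypotheses on $\M$ and $\N$ combined with the rigidity of $\A$ (which makes tensoring by a single object in any exact $\A$-module category into an exact functor with exact adjoints on both sides, hence projective-preserving), after tracking how the two one-sided actions on $\M$, $\M^\opp$, and $\N$ interact through the Morita-type equivalence $\N\mapsto \Fun_\A(\M^\opp,\, \N)$ of Remark~\ref{great EO}.
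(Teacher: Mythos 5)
Your plan follows the same route as the paper's proof: identify $\M\bt_\A\N$ with $\Fun_{\A,re}(\M^\opp,\N)=\Fun_\A(\M^\opp,\N)$ via \eqref{btA = Fun}, reinterpret the left and right actions of projectives $P_1,P_2\in\A$ on an object $F$ as pre- and post-composition of the module functor $F:\M^\opp\to\N$ with the right multiplications $?\ot P_1$ on $\M^\opp$ and $?\ot P_2$ on $\N$, and then argue that the resulting functor is a projective object. Two remarks on where your write-up diverges from, or falls short of, the paper. First, a slip: only the left action is precomposition and hence factors through $\A^*_{\M^\opp}$; the right action is postcomposition with an $\A$-module endofunctor of $\N$ and factors through $\A^*_\N$ --- your own formula $(F\ract Y)(M)=F(M)\ract Y$ shows this, and the paper accordingly invokes exactness of the functor category over both $\A^*_\M$ and $\A^*_\N$, not over a single dual category.

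Second, the step you defer is exactly where the content lies, and the justification you sketch does not supply it. Rigidity of $\A$ indeed makes the action of any fixed object on a module category exact with exact adjoints, hence projective-preserving on objects of that module category; but the statement needed is that $F\circ(?\ot P_1)$, for an arbitrary (typically non-projective) $F$, is a projective object of $\Fun_\A(\M^\opp,\N)$. A pure adjunction argument fails here: moving $P_1,P_2$ across $\Hom$ with duals reduces exactness of $\Hom(P_1\ot F\ot P_2,-)$ to exactness of $\Hom(F,-)$ composed with an exact functor, which would require $F$ itself to be projective. The paper's mechanism instead uses the exactness hypotheses on $\M$ and $\N$ as bimodule categories through Remark~\ref{great EO}: $\Fun_\A(\M^\opp,\N)$ is exact as a module category over $\A^*_\M$ and over $\A^*_\N$, and the right multiplications by $P_1$ and $P_2$ are $\A$-module endofunctors acting through these structures; this (rather than rigidity of $\A$ alone, which holds even when exactness fails) is what yields projectivity of the composite. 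So your reduction to a projectivity statement in the dual categories is in the spirit of the paper, but the concluding argument still has to be made along these lines rather than via the exact-adjoint heuristic.
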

\begin{proof}
It is enough to check that for all objects $F$ in $\M \bt_\A \N$  and projective objects  $P_1,\, P_2$ in $\C$
the object $P_1\ot F \ot P_2$ is projective. That is,  we need to show that the compositions of an
$\A$-module  functor $F: \M^{\opp} \to \N$ with the functors 
\begin{eqnarray*}
\M^{\opp} \to \M^{\opp} &:& M \mapsto M\ot P_1,\\
\N \to \N &:& N \mapsto N \ot P_2
\end{eqnarray*}
are projective 
objects in $\Fun_{\A}(\M^{\opp},\, \N)$.
 This is clear since the latter category is exact over $\A^*_\M$
and $\A^*_\N$  and the right multiplications by $P_1,\, P_2$ are $\A$-module endofunctors. 
\end{proof}

We say that an exact  $\A$-bimodule category $\M$ is {\em invertible} if there exists an exact  
$\A$-bimodule category $\N$ such that 
\[
\M \bt_\A \N \simeq \N \bt_\A \M \simeq \A,
\]
where $\A$ is viewed as an $\A$-bimodule category via the regular left and right actions of $\A$. 

\begin{remark}
\label{CM = C} 
It was proved in \cite[Propositon 4.2]{ENO} that
an $\A$-bimodule category $\M$ is invertible if and only if 
the tensor functor
\begin{equation}
\label{ENO criterion}
L: \A \to (\A^*_\M)^{\opp} : X \mapsto  ? \ot X
\end{equation}
is an equivalence.
\end{remark}

The group of equivalence classes of invertible $\A$-bimodule categories is called the {\em Brauer-Picard} group 
of $\A$ and is denoted by $\BrPic(\A)$.

\subsection{Module categories over braided tensor categories}
\label{modules over braided}

Let now $\C$ be a braided tensor category with the braiding 
\[
c_{X,Y} : X \ot Y \xrightarrow{\simeq} Y \ot X,\qquad X,\,Y \in \C.
\] 
The braiding of $\C$  gives  a tensor structure on the multiplication functor
$\C \bt \C \to \C$ \cite{JS}.  Hence, there is a canonical tensor functor
\begin{equation}
\label{mult is tensor}
\otimes: \C \bt \C^{\opp} \simeq \C \bt \C \to \C. 
\end{equation}

This allows to turn any left $\C$-module category $\M$ into a $\C$-bimodule category
as follows. The right action is $M* X := X * M$ for all $X\in \C$ and $M\in \M$.
Let $a_{X,Y,M}: X\ot(Y\ot M) \xrightarrow{\simeq} (X\ot Y)\ot M$ denote the 
left $\C$-module associativity constraint of $\M$.  The right
$\C$-module associativity constraint  of $\M$ is given by 
\begin{equation}
\label{right associativity}
\xymatrix{
(M*X)* Y \ar@{=}[d] \ar[rr]^{a_{M,X,Y}}  & & M *(X\ot Y) \ar@{=}[d] \\
Y *(X* M) \ar[r]^{a_{Y,X,M}} & (Y\ot X)*M  \ar[r]^{c_{Y,X}} & (X\ot Y) *M
}
\end{equation}
and the middle associativity constraint is given by
\begin{equation}
\label{middle associativity}
\xymatrix{
X*(M*Y)  \ar@{=}[d] \ar[rrr]^{a_{X,Y,M}} &&& (X*M)*Y \ar@{=}[d] \\
 X *(Y* M)  \ar[r]^{a_{X,Y,M}} & (X\ot Y)*M  \ar[r]^{c_{X,Y}}& 
(Y\ot X) *M   \ar[r]^{a_{Y,X,M}^{-1}} & Y * (X * M)
}
\end{equation}
for all $X,\,Y\in \C$ and $M\in \M$.  

Let $\mathbf{Mod}(\C)$ and $\mathbf{Bimod}(\C)$ denote the $2$-categories
of exact module and bimodule categories over $\C$, respectively. 
The above tensor functor \eqref{mult is tensor}  yields a $2$-functor 
\begin{equation}
\label{2functor B}
\B:  \mathbf{Mod}(\C) \to \mathbf{Bimod}(\C). 
\end{equation}
Clearly, the $2$-functor $\B$ is an embedding of $2$-categories.

\begin{definition}
We will call a $\C$-bimodule category  {\em one-sided} if it is equivalent to $\B(\M)$
for some left $\C$-module category $\M$.
\end{definition}

\begin{remark}
One can give an explicit characterization of one-sided categories. Namely,
a $\C$-bimodule category $\M$ is one-sided if it is equipped with a collection of isomorphisms 
\begin{equation}
\label{dXM}
d_{M,X}:M*X\to X*M,
\end{equation}
natural in $X\in\C$ and $M\in\M$, such that the following  diagrams commute:
\begin{equation}\label{coh1}\xymatrix{& M*(X\otimes Y) \ar[dl]_{a_{M,X,Y}} \ar[r]^{d_{M,X\ot Y}} & (X\otimes Y)*M \\
(M*X)*Y \ar[rd]_{d_{M,X}1} &&& X*(Y*M) \ar[ul]_{a_{X,Y,M}} \\
& (X*M)*Y \ar[r]^{a^{-1}_{X,M,Y}} & X*(M*Y) \ar[ur]_{1d_{M,Y}} }\end{equation}
\begin{equation}\label{coh2}\xymatrix{& (X*M)*Y\ar[r]^{d_{X*M,Y}} & Y*(X*M) \ar[dr]^{a_{Y,X,M}}  \\
X*(M*Y) \ar[rd]_{1*d_{M,Y}} \ar[ru]^{a_{X,M,Y}} &&& X*(M*Y), \\
& X*(Y*M) \ar[r]^{a_{X,Y,M}} & (X\ot Y)*M \ar[ur]_{c_{X,Y}*1} }\end{equation}
where $a$ denotes the associativity constraint of $\M$.
\end{remark}

Given left $\C$-module categories $\M$ and $\N$ there is an  obvious $\C$-bimodule equivalence
\[
\B(\B(\M) \bt_\C \N )\cong \B(\M)\bt_\C \B (\N). 
\]

Hence, when $\C$ is braided the group $\BrPic(\C)$ contains a subgroup $\Pic(\C)$ 
consisting of equivalence classes of one-sided invertible $\C$-bimodule categories. 
Following \cite{ENO} we call this group the {\em Picard group} of $\C$.

In what follows we will omit the $2$-functor $\B$ from notation and identify invertible 
$\C$-module categories with their images in  $\mathbf{Bimod}(\C)$.

\subsection{The $\alpha$-induction}
\label{alpha}

Let $\C$ be a braided tensor category  and let $\M$ be a $\C$-module category.
There is a pair of tensor functors (see \cite{BEK, O1}):
\begin{equation}
\label{def alpha ind}
\alpha_\M^\pm : \C \to \C^*_\M
\end{equation}
defined as follows.  For each $X\in \C$ the endofunctors $\alpha^\pm_\M(X) : \M \to \M$
coincide with the left multiplication by $X$, i.e., 
\[
\alpha^\pm_\M(X) = X \ot -. 
\]
Their  $\C$-module functor  structures are given by
\begin{eqnarray*}
\alpha^+_\M(X)(Y\ot M) &=& X\ot Y \ot M \xrightarrow{c_{X,Y}} Y \ot X \ot M = Y\ot \alpha^+_\M(X)(M)\qquad\mbox{and}\\
\alpha^-_\M(X)(M\ot Y) &=&  X\ot Y \ot M  \xrightarrow{c_{Y,X}^{-1}}  Y \ot X \ot M = Y \ot \alpha^-_\M(X)(M), 
\end{eqnarray*}
for all $X,\, Y\in \C$ and $M\in \M$. Here we suppress the associativity constraints.

When $\M$ is invertible the functors $\alpha^\pm_\M$ are equivalences and a functor $\partial_\M:\C\to\C$ defined by
\begin{equation} 
\label{def partial M}
(\alpha_\M^-)\circ \partial_\M = \alpha_\M^+
\end{equation}
is a braided autoequivalence of $\C$. The assignment $\M \mapsto \partial_\M$ gives
rise to a group homomorphism: 
\begin{equation}
\label{def partial}
\partial : \Pic(\C)\to \Aut^{br}(\C) \qquad \M \mapsto \partial_\M.
\end{equation}
To be precise the condition \eqref{def partial M} defines a tensor autoequivalence of $\C$. 
The reason why it is braided is explained in Remark~\ref{imp} (see also \cite{ENO} for details in the fusion case).

\section{The Picard crossed module of a braided tensor category}
\label{Picard section}

subsection{Algebras and their modules}

We refer the reader to \cite{O1} for basic definitions  and facts about algebras in tensor categories
and modules over them.

Let $A$ be an algebra in a tensor category $\A$ with the multiplication $\mu : A \ot A \to A$
and let $M$ be a right $A$-module in $\A$
with the structural map $\nu : M \ot A \to M$. 
For any  $X\in\A$ there is an $A$-module structure on  $X\otimes M$  defined by 
\[
\id_X\otimes\nu:X\otimes M\otimes A \to X\otimes M.
\]  
Thus the category $\A_A$ of right $A$-modules in $\A$ 
is a left $\A$-module category via
$$
\A\times\A_A\to\A_A,\quad (X,\,M)\mapsto X\otimes M.
$$
Similarly, the category  ${}_A\A$ of left  $A$-modules in  $\A$ is a right $\A$-module category.

\begin{remark}
\label{left right op}
Let $A$ be an algebra in $\A$. Then the left $\A$-module category $({}_A\A)^\opp$ 
is equivalent to  $\A_A$. 
\end{remark}

It was shown in \cite{EO} that every  left (respectively, right)
$\A$-module category is equivalent to $\A_A$ 
(respectively, to ${}_A\A$) for some algebra $A$ in $\A$.

Let $A$ be an algebra in a tensor category $\A$ and $\M$ be a left $\A$-module category. 
Define ${_A}\M$ (the category of {\em $A$-modules in $\M$}) as the category of pairs $(M,m)$, 
where $M$ is an object of $\M$ and $m:A*M\to M$ is a morphism in $\M$ such that the diagram
$$\xymatrix{A*(A*M) \ar[dd]_{a_{A,A,M}} \ar[rr]^{1*m} && A*M \ar[rd]^m \\ & & & M\\ (A\ot A)*M \ar[rr]^{\mu*1} && A*M \ar[ru]_m}$$
commutes. 

A morphism between $(M,\, m)$ and $(M',\, m')$ is a morphism $f: M\to M'$ such that
$f\circ m =m' \circ (\id_A * f)$.

\begin{lemma}
\label{upmc}
Let $\A$ be a finite tensor category and let $\M$ be an exact right $\A$-module category.
The functor 
\begin{equation}
\label{Fun to AM}
T: \Fun_\A(\A_A,\M) \to {_A}\M :  F\mapsto F(A)
\end{equation}
is an equivalence of 
categories. 
\end{lemma}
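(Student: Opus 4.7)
The plan is to build an explicit quasi-inverse $S\colon {}_A\M \to \Fun_\A(\A_A,\M)$ using the relative tensor product over $A$, and then check the round-trip identities $T\circ S\cong \id$ and $S\circ T\cong \id$.

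First I would define $S$ as follows. Given $(N,\nu\colon A*N\to N)\in {}_A\M$ and $(M,\rho\colon M\otimes A\to M)\in \A_A$, set
\[
S(N,\nu)(M,\rho) := M\otimes_A N,
\]
the coequalizer in $\M$ of the parallel pair $(M\otimes A)*N \rightrightarrows M*N$, one arrow induced by $\rho$ and the other by $\nu$ after reassociation. This coequalizer exists because $\M$ is abelian and the $\A$-action on $\M$ is right exact in each variable; the universal property promotes $S(N,\nu)$ to an $\A$-module functor via the natural isomorphism $(X\otimes M)\otimes_A N\cong X*(M\otimes_A N)$ for $X\in\A$.

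For $T\circ S\cong \id$, one checks the standard fact that $T(S(N,\nu)) = A\otimes_A N\cong N$ canonically, the isomorphism being produced by $\nu$ together with the defining coequalizer. For $S\circ T\cong \id$, take any $F\in \Fun_\A(\A_A,\M)$ and any $M\in \A_A$, and use the canonical bar presentation
\[
M\otimes A\otimes A \rightrightarrows M\otimes A \to M
\]
as a coequalizer in $\A_A$, with parallel arrows $\rho\otimes \id_A$ and $\id_M\otimes \mu$ for $\mu\colon A\otimes A\to A$ the multiplication of $A$. Since $\A_A$ is an exact $\A$-module category under our standing assumption (Section~\ref{mcmc}), every $\A$-module functor out of $\A_A$ is exact, so $F$ carries this bar coequalizer to a coequalizer in $\M$. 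Combined with the $\A$-module structure of $F$ and the identification $F(M\otimes A)\cong M*F(A)$, this exhibits $F(M)$ as the coequalizer defining $M\otimes_A F(A)$, hence as $S(T(F))(M)$, naturally in $M$.

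The one genuinely non-formal ingredient is the exactness of $F$, supplied precisely by the hypothesis that the source $\A_A$ is exact; this is what turns the bar presentation into a usable resolution in $\M$. Everything else is routine diagram-chasing with associators and middle associators together with the universal property of coequalizers, checking that the isomorphisms above are natural and respect the $\A$-module structures on functors and the $A$-module structures on objects. I expect this bookkeeping, rather than any conceptual difficulty, to be the main obstacle.
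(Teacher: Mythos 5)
Your proposal is correct and follows essentially the same route as the paper: the quasi-inverse is the same functor $S_N(M)=M\otimes_A N$, with the two round-trip isomorphisms $A\otimes_A N\cong N$ and $M\otimes_A F(A)\cong F(M)$. The only difference is that you spell out the second isomorphism via the bar coequalizer and the exactness of $F$ (coming from exactness of $\A_A$), a detail the paper's proof asserts without elaboration.
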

\begin{proof}
For any $\A$-module functor $F:\A_A\to\M$ the object $F(A)\in\M$ has a structure of an $A$-module:
\begin{equation}
\label{mofu}
A*F(A) \xrightarrow{\simeq}   F(A\ot A) \xrightarrow{F(\mu)} F(A),
\end{equation}
where the first arrow  is given by the $\A$-module structure of $F$ and the second arrow is the image
of the multiplication of $A$. It is easy to see that  $\A$-module transformations between $\A$-module functors $F,\,G$ 
correspond to morphisms of $A$-modules $F(A),\, G(A)$ in $\M$.  Thus, $T$ is a well defined functor. 

Define a functor $S: {_A}\M  \to \Fun_\A(\A_A,\M)$ by
$M \mapsto S_M$, where $S_M(X) = X \ot_A M$. It is clear that $S_M$ is an $\A$-module functor
and that $T\circ S$ is isomorphic to the identity endofunctor of ${_A}\M$. 

Also, $S \circ T$ is isomorphic to the identity functor since
for every $\A$-module functor $F: \A_A\to \M$ and a right  $A$-module $X$ in $\A$
there is a natural isomorphism  $X\ot_A F(A) \cong F(X)$. 
Thus, $T$ is an equivalence.
\end{proof}

A particular case of Lemma~\ref{upmc}
that will be useful for us later is the category of  $A$-modules in $\M=\A_B$,
where $B$ is an exact algebra in $\A$. The category ${_A}\A_B$ is  the category of {\em ($A$-$B$)-bimodules in $\C$}. 
\begin{corollary}
\label{fmc}
The functor
$$\Fun_\A(\A_A,\A_B) \to {_A}\A_B,\quad F\mapsto F(A)$$
is an equivalence of categories. 
\end{corollary}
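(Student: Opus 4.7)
The plan is to deduce this as an immediate specialization of Lemma~\ref{upmc}. Concretely, I would apply the lemma to the left $\A$-module category $\M = \A_B$ and then identify ${}_A\M$ with ${}_A\A_B$.

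First I would verify that the hypothesis of Lemma~\ref{upmc} is satisfied, namely that $\A_B$ is an exact left $\A$-module category. This is precisely the content of $B$ being an exact algebra in $\A$ in the sense used earlier in the paper: the defining property of exactness of $B$ is that the module category $\A_B$ is an exact $\A$-module category. (Note that the lemma is stated for right module categories in the excerpt, but the analogous statement for left module categories is the one being applied here; the proof is identical up to passing to $\A^{\opp}$ via Remark~\ref{left right op}.)

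Next I would identify ${}_A\M$ with the category ${}_A\A_B$ of $(A,B)$-bimodules in $\A$. Unwinding the definition of ${}_A\M$ from the text: an object is a pair $(M,m)$ with $M$ an object of $\M = \A_B$ (hence a right $B$-module in $\A$) and $m : A \ast M \to M$ a morphism in $\A_B$, i.e.\ a morphism of right $B$-modules, satisfying the associativity axiom with respect to the multiplication of $A$. Since the left $\A$-action on $\A_B$ is just the tensor product of $\A$ with the induced right $B$-module structure, $m$ being a morphism of right $B$-modules is exactly the compatibility between left $A$- and right $B$-actions that defines an $(A,B)$-bimodule, and the associativity axiom of $({}_A\M)$ is exactly the left $A$-module associativity. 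Thus ${}_A(\A_B) \simeq {}_A\A_B$ as categories, and morphisms match up under the same identification.

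Combining these two steps, Lemma~\ref{upmc} yields the equivalence $\Fun_\A(\A_A,\, \A_B) \xrightarrow{\simeq} {}_A\A_B$, $F \mapsto F(A)$, with $A$-module structure on $F(A)$ given by \eqref{mofu}. The only mildly subtle point — and the closest thing to an obstacle — is making the identification ${}_A(\A_B) = {}_A\A_B$ rigorous, that is, checking that the $A$-module structure produced in the proof of Lemma~\ref{upmc} on $F(A) \in \A_B$ is compatible with the right $B$-action so as to land in the category of genuine $(A,B)$-bimodules; but this is immediate since $F$ is a morphism of left $\A$-module categories from $\A_A$ to $\A_B$, so the structural map $A \ast F(A) \cong F(A \otimes A) \to F(A)$ automatically consists of morphisms in $\A_B$.
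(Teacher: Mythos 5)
Your proposal is correct and is essentially the paper's own argument: the paper presents Corollary~\ref{fmc} precisely as the special case $\M=\A_B$ of Lemma~\ref{upmc}, with ${}_A(\A_B)$ identified with the bimodule category ${}_A\A_B$. Your extra checks (exactness of $\A_B$ from exactness of the algebra $B$, and that the structural map $A*F(A)\to F(A)$ is a morphism of right $B$-modules) are the right points to verify and are exactly what the paper leaves implicit.
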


\subsection{Tensor product of algebras in a braided category}
\label{tpa}

Let now $\C$ be a braided tensor category and let $A$ be an algebra in $\C$. 
Given a left $\C$-module category $\M$,
the braiding in $\C$ allows us to turn ${_A}\M$ into a left $\C$-module category.
In this situation the functor $\Fun_\C(\C_A,\M) \xrightarrow{\sim} {_A}\M$ from Lemma~\ref{upmc}
is an equivalence of  $\C$-module categories.

It is well known that for braided $\C$ the tensor product $A\ot B$ of two algebras $A,B\in\C$ has an algebra structure, 
with the multiplication map $\mu_{A\ot B}$ defined as 
$$
A\ot B\ot A\ot B \xrightarrow{\id_A\ot  c_{B,A}\ot \id_B }  A\ot A\ot B\ot B \xrightarrow{\mu_A\ot \mu_B}  A\ot B,
$$ 
where $\mu_A$ and $\mu_B$ are multiplications of algebras $A$ and $B$, respectively (here we
suppress the associativity constraints in $\C$).

Let $A^{\op}=A$ denote the algebra with the multiplication  opposite to that of $A$:
\[
A\ot A \xrightarrow{c_{A,A}} A\ot A \xrightarrow{\mu_A} A.
\]

\begin{proposition}
\label{AB}
Let $\C$ be a braided tensor category and  let $A$ and $B$ be exact algebras in $\C$.
Then $$\C_A \bt_\C \C_B \simeq \C_{A\ot B}$$ as $\C$-module categories.
\end{proposition}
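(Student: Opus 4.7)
The plan is to identify both $\C_A \bt_\C \C_B$ and $\C_{A \ot B}$ with the category ${}_A(\C_B)$ of $A$-modules in the left $\C$-module category $\C_B$, which was introduced at the beginning of Section~\ref{tpa}.

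For the first identification, I would combine equation~\eqref{btA = Fun} with the braided-case strengthening of Lemma~\ref{upmc} recalled in Section~\ref{tpa}. More precisely, \eqref{btA = Fun} gives $\C_A \bt_\C \C_B \simeq \Fun_{\C,\, re}(\C_A^{\opp},\, \C_B)$, where $\C_A$ is viewed as a right $\C$-module category via the braiding (Section~\ref{modules over braided}). In the braided setting the left $\C$-module category $\C_A^{\opp}$ is equivalent to $\C_A$ itself, by combining Remark~\ref{left right op} with the braiding-induced identification of ${}_A\C$ and $\C_A$ as right $\C$-modules. Thus $\C_A \bt_\C \C_B \simeq \Fun_{\C,\, re}(\C_A,\, \C_B) \simeq {}_A(\C_B)$ as $\C$-module categories, the last equivalence being exactly the braided version of Lemma~\ref{upmc}.

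For the second identification, ${}_A(\C_B) \simeq \C_{A \ot B}$, I would assign to an object $(M,\, \mu_A: A \ot M \to M,\, \nu_B: M \ot B \to M)$ of ${}_A(\C_B)$ the right $A \ot B$-module structure on $M$ given by the composition
$$
M \ot A \ot B \xrightarrow{c_{M,A} \ot \id_B} A \ot M \ot B \xrightarrow{\mu_A \ot \id_B} M \ot B \xrightarrow{\nu_B} M.
$$
Associativity of this action is equivalent to the associativity of $\mu_A$ and $\nu_B$ combined with the bimodule compatibility $\nu_B \circ (\mu_A \ot \id_B) = \mu_A \circ (\id_A \ot \nu_B)$, where the braiding $c_{B,A}$ appearing in the algebra multiplication $\mu_{A \ot B}$ from Section~\ref{tpa} matches the braiding $c_{M, A}$ in the action. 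A quasi-inverse is obtained from a right $A \ot B$-module by restricting along the algebra maps $A \hookrightarrow A \ot B$ and $B \hookrightarrow A \ot B$ and using $c_{M, A}^{-1}$ to convert the resulting right $A$-action into a left one. Both assignments are $\C$-linear and functorial, yielding an equivalence of $\C$-module categories. Composing the two equivalences gives the proposition.

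The main technical obstacle is the first identification $\Fun_{\C,\, re}(\C_A^{\opp},\, \C_B) \simeq \Fun_{\C,\, re}(\C_A,\, \C_B)$: one must carefully distinguish between the left $\C$-module structure on $\C_A$ used in Lemma~\ref{upmc} and the right $\C$-module structure (induced by the braiding) on $\C_A$ needed to form the tensor product $\bt_\C$, and check that the opposite of the latter is $\C$-module equivalent to the former. The associativity verification in the second identification, although conceptually routine, likewise requires a diagram chase matching the braiding in the $A \ot B$-action against the one in $\mu_{A \ot B}$.
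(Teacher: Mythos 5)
Your strategy is the same as the paper's (rewrite $\C_A \bt_\C \C_B$ as a functor category via \eqref{btA = Fun}, identify it with a category of modules/bimodules via Lemma~\ref{upmc}, then match bimodules with $A\ot B$-modules), but at two points you drop the braiding-induced twist of the algebra, and this is a genuine error rather than a convention issue. Converting a one-sided action to the other side by the braiding always replaces the algebra by its opposite $A^{\opp}$ (multiplication $\mu_A\circ c_{A,A}$, as in Section~\ref{tpa}): a right $A$-module becomes a left $A^{\opp}$-module, never a left $A$-module unless $A$ is commutative. Hence $\C_A$, viewed as a right $\C$-module category via the braiding, is equivalent to ${}_{A^{\opp}}\C$, and by Remark~\ref{left right op} its opposite is $\C_{A^{\opp}}$ --- not $\C_A$, as you claim. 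The claim $\C_A^{\opp}\simeq\C_A$ cannot hold in general: for an Azumaya algebra $A$ one has $A\ot A^{\opp}\cong A\ot A^{*}$, so $\C_A\bt_\C\C_{A^{\opp}}\simeq\C$, i.e.\ $\C_{A^{\opp}}$ represents the inverse of $\C_A$ in $\Pic(\C)$; your claim would therefore force every element of $\Pic(\C)$ to have order at most two, contradicting the computations of Section~\ref{pointed example} (e.g.\ $\Pic(\C(A,1))\cong\wedge^2 A$, or $\Pic(\C(A,q))\cong O(A,q)$ for non-degenerate $q$).

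The same twist is missing in your second identification. With $\mu_{A\ot B}=(\mu_A\ot\mu_B)\circ(\id_A\ot c_{B,A}\ot\id_B)$, restricting a right $A\ot B$-module along $A\hookrightarrow A\ot B$ must give a right $A$-module; but converting a left $A$-action into a right action by $c_{M,A}$ produces a right $A^{\opp}$-action, so your proposed formula is associative over $A^{\opp}\ot B$, not over $A\ot B$, and the diagram chase you defer would fail. The correct statement, which the paper uses, is that right $(A\ot B)$-modules are the same as $(A^{\opp}\!-\!B)$-bimodules, i.e.\ ${}_{A^{\opp}}\C_B\simeq\C_{A\ot B}$. The two slips do not cancel: your intermediate category ${}_A\C_B$ is equivalent to $\C_{A^{\sharp}\ot B}$ for a braided opposite $A^{\sharp}$ of $A$, which in general differs from $\C_A\bt_\C\C_B\simeq\C_{A\ot B}$. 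Inserting $A^{\opp}$ at both places turns your argument into exactly the paper's proof: $\C_A\bt_\C\C_B\simeq\Fun_\C(\C_{A^{\opp}},\,\C_B)\simeq{}_{A^{\opp}}\C_B\simeq\C_{A\ot B}$.
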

\begin{proof}
Note that a left $\C$-module category $\C_A$ considered as a right $\C$-module category is equivalent to ${}_{A^\opp}\C$. 
By Remark~\ref{left right op}
the opposite category $({}_{A^\opp}\C)^{\opp}$ is equivalent to $\C_{A^{\op}}$ as a left $\C$-module category. 

Hence, using \eqref{btA = Fun} and Corollary~\ref{fmc} we obtain
\[
\C_A \bt_\C \C_B \simeq \Fun_{\C}(({}_{A^\opp}\C)^{\op},\, \C_B) 
\simeq \Fun_{\C}(\C_{A^{\op}},\, \C_B) \simeq {}_{A^{\op}}\C_B \simeq \C_{A\ot B}, 
\]
since an $(A\ot B)$-module in $\C$  is the same thing as an $(A^{\opp}-B)$-bimodule. 
\end{proof}

\subsection{Azumaya algebras}
\label{azumaya section}

Here we recall the characterization of algebras in $\C$ whose categories of modules are invertible.

Let $A$ be an exact algebra in a braided tensor category $\C$. 

Note that the multiplication on $A$ 
$$
A\ot A^{\opp}\ot A \xrightarrow{\id_A \ot c_{A,A}}   A\ot A\ot A \xrightarrow{\mu_A\ot \id_A} A\ot A \xrightarrow{\mu_A} A
$$
induces a homomorphism of algebras
\begin{equation}
\label{az}
A\ot A^{\opp}\to A\ot A^*,
\end{equation}
where $A^*$ is the dual object to $A$ and the multiplication in $A\ot A^*$ is defined using the evaluation morphism. 

\begin{definition}
An algebra $A$ in a braided tensor category $\C$ is {\em Azumaya} if the map \eqref{az} is an isomorphism.
\end{definition}

It was established in   \cite[Theorem 3.1]{OZ} that
$A$ is an Azumaya algebra  if and only if the tensor functors  
\[
\alpha^\pm_{\C_A}:\C\to {_A}\C_A
\] 
defined in \eqref{def alpha ind}  are equivalences. Thus,  the Picard group of $\C$ is isomorphic
to the group of Morita equivalence classes of exact Azumaya algebras (the latter group was considered in
\cite{OZ}). 

Let $A$ be an exact Azumaya algebra in $\C$.  
Let $\partial_A = \partial_{\C_A}$  denote the braided  autoequivalence 
introduced  in \eqref{def partial}.  By definition of $\partial_A$, there exists 
a  natural isomorphism of right $A$-modules
\[
\phi_X:  A\ot X \xrightarrow{\simeq} \partial_A(X) \ot A, \quad X\in \C.
\]
This means that  the following diagram commutes:
\begin{equation}
\label{phi is bim}
\xymatrix{
A\ot X \ot A \ar[rrr]^{\phi_X\ot \id_A}  \ar[d]_{c_{A,X} \ot \id_A} &&&  
\partial_A(X) \ot A  \ot A\ar[ddd]^{\id_{\partial_A(X)}\ot \mu_A} \\
X\ot A \ot A \ar[d]_{\id_X\ot \mu_A} &&&  \\
X\ot A \ar[d]_{c_{X,A}} &&& \\
A\ot X \ar[rrr]^{\phi_X}  &&& \partial_A(X) \ot A. 
}
\end{equation}

The tensor structure
\[
\nu_{X, Y} :     \partial_A(X\ot Y) \xrightarrow{\simeq}  \partial_A(X)  \ot \partial_A(Y)  ,\qquad X,Y\in \C, 
\]
of $\partial_A$ satisfies the following commutative diagram:
\begin{equation}
\label{tensoriality of phi+}
\xymatrix{
A\ot X \ot Y \ar[rrr]^{\phi_X\ot \id_Y} \ar[d]_{\phi_{X\ot Y}} &&&
\partial_A(X) \ot A \ot Y \ar[d]^{\id_{\partial_A(X)} \ot \phi_Y} \\
\partial_A(X\ot Y) \ot A \ar[rrr]^{\nu_{X,Y}} &&& \partial_A(X) \ot \partial_A(Y) \ot A.
}
\end{equation}

\begin{lemma}
\label{tensoriality of phi++}
The following diagram
\begin{equation}
\label{tensoriality of phi}
\xymatrix{
A\ot X\ot A \ot Y \ar[rr]^{\phi_X \ot \phi_Y} \ar[d]_{ c_{X,A} } &&
\partial_A(X)\ot A \ot \partial_A(Y)\ot A \ar[d]^{c_{A,\partial_A(Y)}}  \\
A\ot A\ot X\ot Y \ar[d]_{ m_A} &&
\partial_A(X) \ot \partial_A(Y)\ot A \ot A\ar[d]^{{ m_A}} \\
A\ot X\ot Y \ar[r]^{\phi_{X\ot Y}} & \partial_A(X\ot Y) \ot A \ar[r]^{\nu_{X,Y}}  &
\partial_A(X) \ot \partial_A(Y) \ot A.
}
\end{equation}
is commutative (here, as usual, we suppress the associativity constraints and identity morphisms).
\end{lemma}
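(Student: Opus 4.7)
The plan is to introduce the intermediate morphism
\[
\Gamma := (\id_{\partial_A(X)} \otimes \phi_Y) \circ (\id_{\partial_A(X)} \otimes m_A \otimes \id_Y) \circ (\phi_X \otimes \id_A \otimes \id_Y) \colon A\otimes X\otimes A\otimes Y \longrightarrow \partial_A(X)\otimes\partial_A(Y)\otimes A,
\]
and show that both the left--bottom and the top--right paths of \eqref{tensoriality of phi} coincide with $\Gamma$.

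For the left--bottom path, the first step is to apply \eqref{tensoriality of phi+} to replace $(\nu_{X,Y}\otimes\id_A)\circ\phi_{X\otimes Y}$ by $(\id_{\partial_A(X)}\otimes\phi_Y)\circ(\phi_X\otimes\id_Y)$. After this substitution, the comparison with $\Gamma$ reduces to the identity
\[
\phi_X\circ(m_A\otimes\id_X)\circ(\id_A\otimes c_{X,A}) \;=\; (\id_{\partial_A(X)}\otimes m_A)\circ(\phi_X\otimes\id_A)
\]
of morphisms $A\otimes X\otimes A\to\partial_A(X)\otimes A$, which I would derive from \eqref{phi is bim} after straightening the braidings $c_{X,A}$ and $c_{A,X}$ via the naturality of $c$ with respect to the multiplication $m_A$ and the hexagon axiom.

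For the top--right path, I would factor $\phi_X\otimes\phi_Y = (\id_{\partial_A(X)\otimes A}\otimes\phi_Y)\circ(\phi_X\otimes\id_{A\otimes Y})$, so that the common prefactor $\phi_X\otimes\id_{A\otimes Y}$ cancels and the comparison with $\Gamma$ reduces to the left-module identity
\[
\phi_Y\circ(m_A\otimes\id_Y) \;=\; (\id_{\partial_A(Y)}\otimes m_A)\circ(c_{A,\partial_A(Y)}\otimes\id_A)\circ(\id_A\otimes\phi_Y)
\]
of morphisms $A\otimes A\otimes Y\to\partial_A(Y)\otimes A$. This is the left-sided analogue of \eqref{phi is bim}, and should follow from the $\C$-module naturality of $\phi_Y$ (as a natural isomorphism between the $\C$-module functors $\alpha^+_{\C_A}(Y)$ and $\alpha^-_{\C_A}(\partial_A(Y))$) evaluated on the $\C_A$-morphism $m_A\colon A\otimes A\to A$, combined once more with \eqref{phi is bim} to dispose of the resulting value of $\phi_Y$ on $A\otimes A$.

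The main obstacle will be the last identity: it is the natural left-sided companion of \eqref{phi is bim}, but is not stated explicitly in the preceding material, so it has to be extracted from the fact that the isomorphism $\alpha^+_{\C_A}\cong\alpha^-_{\C_A}\circ\partial_A$ defining $\phi_Y$ is an isomorphism of $\C$-module functors (rather than merely of $k$-linear ones). Once it is in place, both paths of the diagram visibly coincide with $\Gamma$, and the lemma follows.
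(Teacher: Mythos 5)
Your bookkeeping is correct: given your two module-type identities, both paths of \eqref{tensoriality of phi} do collapse onto $\Gamma$, and your use of \eqref{tensoriality of phi+} to replace $\nu_{X,Y}\circ\phi_{X\ot Y}$ is exactly the final step of the paper's own argument. In substance your plan is an unpacked version of the paper's proof: there, one observes that the two vertical composites in \eqref{tensoriality of phi} are the canonical epimorphisms $A\ot X\ot A\ot Y\to (A\ot X)\ot_A(A\ot Y)\cong A\ot X\ot Y$ and $\partial_A(X)\ot A\ot\partial_A(Y)\ot A\to(\partial_A(X)\ot A)\ot_A(\partial_A(Y)\ot A)\cong \partial_A(X)\ot\partial_A(Y)\ot A$, so the outer square commutes by functoriality of $\ot_A$ applied to $\phi_X$ and $\phi_Y$, and then \eqref{tensoriality of phi+} identifies the bottom rows. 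Your identities --- the right $A$-module property of $\phi_X$ and the left $A$-module property of $\phi_Y$ --- are precisely what that invocation of functoriality of $\ot_A$ consumes.

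The gap is that neither identity is actually established. For the first one, your proposed derivation from \eqref{phi is bim} does not close: straightening the left path of \eqref{phi is bim} by naturality of $c$ with respect to $\mu_A$ and the hexagon gives $c_{X,A}\circ(\id_X\ot\mu_A)\circ(c_{A,X}\ot\id_A)=(\mu_A\ot\id_X)\circ(\id_A\ot c_{X,A})\circ\bigl((c_{X,A}\circ c_{A,X})\ot\id_A\bigr)$, so the double braiding $c_{X,A}\circ c_{A,X}$ on the $A\ot X$ factor survives and cannot be cancelled; the two candidate right $A$-actions on $A\ot X$ (the conjugated one appearing in \eqref{phi is bim} and the one used in the left column of \eqref{tensoriality of phi}) differ by this monodromy, which is nontrivial in a genuinely braided category. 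Hence your identity, exactly like its left-sided companion for $\phi_Y$, has to be traced back to the definition of $\phi$: it is the statement that $\phi_X$, being the value at $A$ of an isomorphism of $\C$-module functors $\alpha^+_{\C_A}(X)\cong\alpha^-_{\C_A}(\partial_A(X))$ coming from \eqref{def partial M} and \eqref{def alpha ind}, is a morphism of $A$-bimodules for the bimodule structures carried by $\alpha^\pm$. You flag the second identity as ``to be extracted'' but never prove it, and the first one would not follow by the route you describe; since these two facts are the entire content of the lemma (they are what the paper's appeal to $\ot_A$ packages), the proposal as it stands is a correct plan rather than a proof. Once both module-map properties are verified from the module-functor structure of $\phi$, your two reductions to $\Gamma$ go through verbatim.
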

\begin{proof}
Note that compositions of the left and the right vertical arrows in diagram \eqref{tensoriality of phi}
coincide, respectively, with the 
canonical epimorphisms
\begin{eqnarray*}
A\ot X\ot A \ot Y  &\to& (A\ot X)\ot_A (A \ot Y) \cong A \ot X \ot Y \quad \mbox{and} \\
\partial_\A(X)\ot A \ot \partial_\A(Y)\ot A &\to& (\partial_A(X)\ot A) \ot_A (\partial_A(Y)\ot A) \\
&   & \cong \partial_\A(X) \ot \partial_\A(Y)\ot A .
\end{eqnarray*}
Hence, the following diagram
\begin{equation}
\label{tensoriality of phi'}
\xymatrix{
A\ot X\ot A \ot Y \ar[rr]^{\phi_X \ot \phi_Y} \ar[d]_{ c_{X,A} } &&
\partial_A(X)\ot A \ot \partial_A(Y)\ot A \ar[d]^{c_{A,\partial_A(Y)}}  \\
A\ot A\ot X\ot Y \ar[d]_{\mu_A} &&
\partial_A(X) \ot \partial_A(Y)\ot A \ot A\ar[d]^{{ m_A}} \\
A\ot X\ot Y \ar[r]^{\phi_X} & \partial_A(X) \ot A\ot Y  \ar[r]^{\phi_Y}  &
\partial_A(X) \ot \partial_A(Y) \ot A.
}
\end{equation}
is commutative by functoriality  of $\ot_A$.  But the bottom row composition in diagram \eqref{tensoriality of phi'}
coincides with that of diagram \eqref{tensoriality of phi} by the identity \eqref{tensoriality of phi+}.
\end{proof}

Let $B$ be an algebra in $\C$ and suppose that $A$ is an Azumaya algebra in $\C$.
Then $\partial_A(B)$ is also an algebra in $\C$.  We will denote by
\[
\mu_B: B \ot B \to B\quad  \mbox{ and  } \quad
\mu_{\partial_A(B)}: \partial_A(B) \ot \partial_A(B) \to \partial_A(B)
\]
the multiplications of $B$ and $\partial_A(B)$ respectively.

\begin{proposition}
\label{AB=BA}
The morphism $\phi_B: A\ot B\to \partial_A(B)\ot A$ is an isomorphism of algebras.
\end{proposition}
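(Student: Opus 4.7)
The goal is to verify that $\phi_B$ intertwines the multiplications on $A\ot B$ and on $\partial_A(B)\ot A$. Recall that, since $\C$ is braided, these multiplications are built from the component multiplications with an intervening braiding: $\mu_{A\ot B}=(\mu_A\ot\mu_B)\circ(\id\ot c_{B,A}\ot\id)$ and $\mu_{\partial_A(B)\ot A}=(\mu_{\partial_A(B)}\ot\mu_A)\circ(\id\ot c_{A,\partial_A(B)}\ot\id)$. The core content of the proposition is therefore to relocate a $c_{B,A}$ on the source to a $c_{A,\partial_A(B)}$ on the target across $\phi_B\ot\phi_B$, which is exactly what Lemma \ref{tensoriality of phi++} delivers when specialized to $X=Y=B$.

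The plan is to compute $\mu_{\partial_A(B)\ot A}\circ(\phi_B\ot\phi_B)$ and show it equals $\phi_B\circ\mu_{A\ot B}$. First, write
\[
\mu_{\partial_A(B)\ot A}\circ(\phi_B\ot\phi_B)=(\mu_{\partial_A(B)}\ot\id_A)\circ\bigl((\id\ot\mu_A)\circ(\id\ot c_{A,\partial_A(B)}\ot\id)\circ(\phi_B\ot\phi_B)\bigr).
\]
Now apply Lemma \ref{tensoriality of phi++} with $X=Y=B$: the bracketed composition equals $(\nu_{B,B}\ot\id_A)\circ\phi_{B\ot B}\circ(\mu_A\ot\id)\circ(\id\ot c_{B,A}\ot\id)$. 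Next, use that $\partial_A$ is a tensor functor: by definition, the algebra structure on $\partial_A(B)$ satisfies $\mu_{\partial_A(B)}\circ\nu_{B,B}=\partial_A(\mu_B)$. This turns the expression into
\[
(\partial_A(\mu_B)\ot\id_A)\circ\phi_{B\ot B}\circ(\mu_A\ot\id)\circ(\id\ot c_{B,A}\ot\id).
\]
Finally, invoke naturality of $\phi_X$ in $X$ applied to the morphism $\mu_B:B\ot B\to B$, giving $(\partial_A(\mu_B)\ot\id_A)\circ\phi_{B\ot B}=\phi_B\circ(\id_A\ot\mu_B)$. Combining yields $\phi_B\circ(\mu_A\ot\mu_B)\circ(\id\ot c_{B,A}\ot\id)=\phi_B\circ\mu_{A\ot B}$, as desired.

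Unit compatibility is a much lighter check: naturality of $\phi$ with respect to the unit $\eta_B:\be\to B$ together with the identification of $\phi_{\be}$ through the unit constraints of $\partial_A$ forces $\phi_B\circ(\eta_A\ot\eta_B)$ to equal the unit of $\partial_A(B)\ot A$, which is $\partial_A(\eta_B)\ot\eta_A$. Since $\phi_B$ is already known to be an isomorphism (its $\C$-module-categorical origin, plus the commutativity of diagram \eqref{phi is bim} exhibiting it as a morphism of $A$-bimodules), the above computation promotes it to an isomorphism of algebras. The principal obstacle is purely the bookkeeping in the first computation: one has to be careful that the two occurrences of $\mu_A$ (one built into $\mu_{\partial_A(B)\ot A}$, the other coming from the lemma) and the two braidings are threaded through the correct tensorands, but once Lemma \ref{tensoriality of phi++} is in hand the proof reduces to this diagram chase plus naturality of $\phi$.
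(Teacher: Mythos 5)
Your proof is correct and follows the same route as the paper: both specialize Lemma~\ref{tensoriality of phi++} to $X=Y=B$ and then identify the remaining square as the definition of $\mu_{\partial_A(B)}$ (tensor structure $\nu_{B,B}$ of $\partial_A$ plus naturality of $\phi$ at $\mu_B$), which is exactly the decomposition of diagram~\eqref{phi is iso} used in the paper. Your extra remarks on unit compatibility and on $\phi_B$ already being an isomorphism are fine and only make explicit what the paper leaves implicit.
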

\begin{proof}
Consider the following diagram:
\begin{equation}
\label{phi is iso}
\xymatrix{
A\ot B\ot A \ot B \ar[rr]^{\phi_B \ot \phi_B} \ar[d]_{ c_{B,A} } &&
\partial_A(B)\ot A \ot \partial_A(B)\ot A \ar[d]^{c_{A,\partial_A(B)}}  \\
A\ot A\ot B\ot B \ar[d]_{ \mu_A} &&
\partial_A(B) \ot \partial_A(B)\ot A \ot A\ar[d]^{{ \mu_A}} \\
A\ot B\ot B \ar[r]^{\phi_{B\ot B}} \ar[d]_{\mu_B} & \partial_A(B\ot B) \ot A \ar[r]^{\nu_{B,B}}  &
\partial_A(B) \ot \partial_A(B) \ot A \ar[d]^{\mu_{\partial_A(B)}} \\
A\ot B  \ar[rr]^{\phi_B} && \partial(B) \ot A.
}
\end{equation}
The upper subdiagram is commutative by Lemma~\ref{tensoriality of phi++} 
and the lower subdiagram is the definition of multiplication  $\mu_{\partial_A(B)}$. Hence, diagram \eqref{phi is iso} is commutative. 
This is precisely the property of $\phi_B$ being an algebra homomorphism.
\end{proof}

\subsection{Definition of the Picard crossed module}
\label{def Pic cr mod}

\begin{definition}     
\label{2Whitehead}
A {\em crossed module} $(G,\, C)$ is a pair of groups $G$ and $C$
together with an action of $G$ on $C$, denoted $(g,\,c)\mapsto {}^g c$,
and a homomorphism $\partial: C\to G$ satisfying
\begin{eqnarray} 
\label{crossed module1}
\partial({}^g c) &=& g \partial(c) g^{-1}, \\
\label{crossed module2}
{}^{\partial(c)} c' &=& cc'c^{-1}\quad  c,c'\in C, g\in G.
\end{eqnarray}
Let  $(G_1,\, C_1)$ and $(G_2,\, C_2)$ be  crossed modules with structural
maps $\partial_1: C_1\to G_1$ and  $\partial_1: C_2\to G_2$. A {\em homomorphism}
between these crossed modules  is a pair of group homomorphisms $\gamma: G_1\to G_2$ 
and $\varphi: C_1\to \C_2$ such that  $\partial_2\circ \phi =\gamma \circ \partial_1$ and 
$\phi({}^g c) = {}^{\gamma(g)} \phi(c)$ for all $c\in C_1$ and $g\in G_1$
\end{definition}

\begin{remark}
\label{Kernel d}
It is clear that the kernel of the homomorphism $\partial$ in Definition~\ref{2Whitehead}
is a subgroup of the center of $C$ and the image of $\partial$  is a normal subgroup of $G$.
\end{remark}

Let $\C$ be a braided tensor category. Set
\begin{equation}
\label{shorthand}
G :=\Aut^{br}(\C),\quad C:= \Pic(\C).
\end{equation}
In \eqref{def partial} we defined a canonical homomorphism 
\[
\partial: C \to G :\M \mapsto \partial_\M. 
\]

There is also a canonical action of $\Aut^{br}(\C)$ on $\Pic(\C)$. Namely, for $g\in \Aut^{br}(\C)$ and  a $\C$-module 
category $\M$  the category ${}^g \M$ is defined as follows. As an abelian category, ${}^g \M = \M$.
The action of $\C$ on $\M$ is defined by
\[
X\odot M := g^{-1}(X)*M ,\qquad \mbox{ for all } M\in \M,\, X\in \C.
\]
Note that for an algebra $A\in\C$ the $\C$-module category ${}^g (\C_A)$ is equivalent to $\C_{g(A)}$. Here $g(A)$ is the algebra with multiplication $\mu_{g(A)} = g(\mu_A)$. 

\begin{theorem}
\label{crmodthm}
The pair $(G,\,C)= (\Aut^{br}(\C),\,\Pic(\C))$  equipped with the above structural operations  is a crossed module.
\end{theorem}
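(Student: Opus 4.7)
The plan is to verify the two defining identities of a crossed module by working with the algebraic description of invertible module categories provided in Section~\ref{azumaya section}. Every element of $\Pic(\C)$ can be represented as $\C_A$ for an exact Azumaya algebra $A\in\C$, and the homomorphism $\partial$ sends $\C_A$ to the braided autoequivalence $\partial_A$ characterized by the natural isomorphism $\phi_X:A\otimes X\xrightarrow{\simeq}\partial_A(X)\otimes A$ of right $A$-modules. The strategy is to translate both crossed module axioms into statements about algebras in $\C$ that follow from Propositions~\ref{AB} and~\ref{AB=BA}.

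To prove the first identity $\partial({}^g\M)=g\,\partial(\M)\,g^{-1}$, first observe that ${}^g(\C_A)\simeq \C_{g(A)}$ as $\C$-module categories, where $g(A)$ carries the algebra structure $\mu_{g(A)}=g(\mu_A)$. Then I apply the braided tensor autoequivalence $g$ to the defining isomorphism $\phi_X:A\otimes X\to\partial_A(X)\otimes A$; using that $g$ preserves the braiding and the right $A$-module structures transport to right $g(A)$-module structures, the resulting natural isomorphism
\[
g(A)\otimes g(X)\xrightarrow{\simeq} g(\partial_A(X))\otimes g(A)
\]
exhibits $g\circ\partial_A\circ g^{-1}$ as $\partial_{g(A)}$. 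This identifies $\partial({}^g(\C_A))$ with $g\partial(\C_A)g^{-1}$.

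For the second identity ${}^{\partial(\M)}\N\simeq\M\boxtimes_\C\N\boxtimes_\C\M^{-1}$, write $\M=\C_A$ and $\N=\C_B$. Since $A$ is Azumaya, $A\otimes A^{\mathrm{op}}\cong A\otimes A^*$ and $\C_{A\otimes A^{\mathrm{op}}}\simeq\C$, so $\C_{A^{\mathrm{op}}}$ is inverse to $\C_A$ in $\Pic(\C)$. Iterating Proposition~\ref{AB} gives
\[
\C_A\boxtimes_\C\C_B\boxtimes_\C\C_{A^{\mathrm{op}}}\simeq \C_{A\otimes B\otimes A^{\mathrm{op}}}.
\]
By Proposition~\ref{AB=BA} the map $\phi_B:A\otimes B\to\partial_A(B)\otimes A$ is an isomorphism of algebras, hence $A\otimes B\otimes A^{\mathrm{op}}\cong\partial_A(B)\otimes A\otimes A^{\mathrm{op}}$ as algebras. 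Applying Proposition~\ref{AB} once more and using $\C_{A\otimes A^{\mathrm{op}}}\simeq\C$ yields $\C_{\partial_A(B)}$, which is precisely ${}^{\partial_A}\C_B$.

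The main obstacle is Step two of axiom (2): arguing that the chain of identifications above respects the $\C$-module (rather than merely abelian) structures, and in particular that the tensor product of module categories is computed correctly at the level of algebra tensor products. This is where Propositions~\ref{AB} and~\ref{AB=BA} do the essential work, so once they are in place the verification is essentially bookkeeping. A minor point to check is that ${}^g(\C_A)\simeq\C_{g(A)}$ as \emph{bimodule} categories (not just left module categories), which follows from the fact that the bimodule structure in the one-sided setting is determined by the braiding that $g$ preserves.
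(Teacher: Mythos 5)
Your proposal is correct and essentially reproduces the paper's argument: both crossed-module axioms are reduced to the algebra presentation of invertible module categories and rest on Propositions~\ref{AB} and~\ref{AB=BA}, exactly as in the paper. The only (minor) differences are that the paper verifies axiom \eqref{crossed module1} directly from the identity $\alpha^\pm_{{}^g\M}\cong\alpha^\pm_\M\circ g^{-1}$ without passing to algebras, and verifies axiom \eqref{crossed module2} in the equivalent multiplied form ${}^{\partial_\M}\N\bt_\C\M\simeq\M\bt_\C\N$, which avoids your extra steps involving the inverse $\C_{A^{\op}}$ and the equivalence $\C_{A\ot A^{\op}}\simeq\C$.
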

\begin{proof}
To check the axiom \eqref{crossed module1}, 
note that tensor equivalences  
\[
\alpha^\pm_{{}^g\M}: \C \to  \C^*_{{}^g\M}
\]
defined in \eqref{def alpha ind} satisfy $\alpha^\pm_{{}^g\M}\cong \alpha^\pm_\M \circ g^{-1}$. Hence,
\begin{equation}
\lb{fpcm}
\partial_{{}^g\M}\cong (\alpha^-_{{}^g\M})^{-1} \circ \alpha^+_{{}^g\M}\cong g\circ \partial_\M \circ g^{-1},
\quad \mbox{for all } \M \in \Pic(\C),\, g\in G.
\end{equation}

Let us check axiom~\eqref{crossed module2}.  
Take $\M,\,\N\in \Pic(\C)$  and let $A$ and $B$ be algebras in $\C$
such that $\M\simeq \C_A$ and $\N\simeq \C_B$.  By Proposition~\ref{AB} we have 
\[
\M \bt_\C \N \simeq \C_{A\ot B} \quad \mbox{ and } \quad
{}^{\partial_\M}\N \bt_\C \M \simeq \C_{\partial_\M(B) \ot A}.
\]
Since by Proposition~\ref{AB=BA} the algebras $A\ot B$ and $\partial_\M(B) \ot A$ are isomorphic,
we conclude  $\M \bt_\C \N  \simeq {}^{\partial_\M}\N \bt_\C \M$, as required.
\end{proof}

\begin{definition}
We will call the pair $(\Aut^{br}(\C),\,\Pic(\C))$ the {\em Picard crossed module} of $\C$ and denote it 
$\mathfrak{P}(\C)$.
\end{definition}

\section{Picard crossed module and braided autoequivalences of the center}
\label{section braided aut}

In this Section we give a characterization of the Picard crossed
module  of a braided tensor category $\C$ in terms of  braided autoequivalences of $\Z(\C)$.

\subsection{The Brauer-Picard group and braided autoequivalences of the center}
\label{Here Phi is defined}

Let $\M$ be an exact left $\C$-module category.  It can be regarded as a $(\C \bt \C^*_\M)$-module category.
The following constructions are taken from \cite[Section 3.4]{EO}.  There are canonical equivalences
\begin{equation}
\label{ZC = ZCM}
a_\M : \Z(\C) \xrightarrow{\sim} (\C \bt \C^*_\M)^*_\M : (Z,\,\gamma) \mapsto Z * ?, 
\end{equation}
where the left $\C$-module functor structure of $a_\M(Z,\gamma)$ is given by
\begin{equation}
\label{aM left}
X*(Z*M) \xrightarrow{a_{X,Z,M}} (X\ot Z)*M \xrightarrow{\gamma_X} (Z\ot X)*M
\xrightarrow{a_{Z,X,M}^{-1}}  Z*(X*M),
\end{equation}
for all $X\in \C$ and $M\in \M$, and its left $\C^*_\M$-module functor structure 
\begin{equation}
\label{aM  right}
F(Z*M) \xrightarrow{\simeq}  Z* F(M),
\end{equation}
for $F\in \C^*_\M$ is given using the $\C$-module functor structure of $F$.

One defines a functor 
\begin{equation}
\label{tilde aM}
\tilde{a}_\M :   \Z(\C^*_\M ) \xrightarrow{\sim} (\C \bt \C^*_\M)^*_\M 
\end{equation}
in an analogous way. 

The composition $\tilde{a}_\M^{-1}\circ a_\M$ is a braided  tensor equivalence between
$\Z(\C)$ and $ \Z(\C^*_\M)^\rev = \Z((\C^*_\M)^{\opp})$.

When $\M$ is an invertible $\C$-bimodule category the composition of $\tilde{a}_\M$ and the braided tensor 
equivalence   $\Z(\C)\xrightarrow{\sim} \Z((\C^*_\M)^{\opp})$  induced by the tensor equivalence
\[
L: \C \xrightarrow{\sim} (\C^*_\M)^{\opp} : X \mapsto ? * X
\]
from Remark~\ref{CM = C} gives a tensor equivalence
\begin{equation}
\label{bM}
b_\M : \Z(\C) \xrightarrow{\sim} (\C \bt \C^*_\M)^*_\M : (Z,\,\gamma) \mapsto ? * Z
\end{equation}
where  the left $\C$-module functor structure of $b_\M(Z,\, \gamma)$ is given by 
the middle associativity constraint of $\M$:
\begin{equation}
\label{bM left}
X*(M*Z)  \xrightarrow{a_{X,M,Z}} (X*M)*Z,
\end{equation}
while the right $\C$-module functor structure
(which is the same as the  left $C^*_\M$-module functor structure
upon the identification $\C^*_\M \simeq \C^{\opp}$)
of $b_\M(Z,\, \gamma)$ is given using the right $\C$-module associativity constraint of $\M$
and the half braiding:
\begin{equation}
\label{bM right}
(M*Z)*Y \xrightarrow{a_{M,Z,Y}} M*(Z\ot Y) \xrightarrow{\gamma_Y^{-1}} M*(Y\ot Z)
\xrightarrow{a_{M,Y,Z}^{-1}} (M*Y)*Z,
\end{equation}
for all $X,\,Y\in \C$ and $M\in \M$.

Thus, we have a canonical braided tensor autoequivalence
\begin{equation}
\label{PhiM}
\Phi(\M) = b_\M^{-1}\circ a_\M :\Z(\C)\to \Z(\C).
\end{equation}

The following result was proved  in \cite[Section 5]{ENO} when $\C$ is a fusion category.  This 
argument carries over verbatim to the case of finite tensor categories. We recall  the proof
for the reader's convenience and also for the future references. 

\begin{theorem}
\label{BrPic=Autbr theorem}
Let $\C$ be a finite tensor category. The assignment $\M \mapsto \Phi(\M)$, where
$\Phi(\M)$ is defined in \eqref{PhiM} gives rise to a group isomorphism 
\begin{equation}
\label{big thing}
\Phi : \BrPic(\C) \xrightarrow{\simeq} \Aut^{br}(\Z(\C)).
\end{equation}
\end{theorem}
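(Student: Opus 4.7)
I would approach this by (i) checking $\Phi$ is a well-defined group homomorphism, and (ii) constructing an inverse map and verifying both compositions recover identities. For (i), given invertible $\C$-bimodule categories $\M, \N$, the relative tensor product induces a canonical equivalence $(\C \bt \C^{\op})^*_{\M \bt_\C \N} \simeq (\C \bt \C^{\op})^*_{\M} \circ (\C \bt \C^{\op})^*_{\N}$ (composition in the 2-category $\mathbf{Bimod}(\C \bt \C^{\op})$, using Remark~\ref{great EO}), and under this identification both $a_{\M \bt_\C \N}$ and $b_{\M \bt_\C \N}$ decompose as the corresponding compositions. Multiplicativity $\Phi(\M \bt_\C \N) \cong \Phi(\M) \circ \Phi(\N)$ then follows, and well-definedness on equivalence classes is automatic since the formulas \eqref{aM left}--\eqref{bM right} use only the $(\C \bt \C^{\op})$-module category structure of $\M$.

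To construct the inverse, given $\alpha \in \Aut^{br}(\Z(\C))$, I would produce an invertible $\C$-bimodule category $\M_\alpha$ as follows. By \eqref{ZC = ZCM} the center $\Z(\C)$ is identified with $(\C \bt \C^{\op})^*_\C$ via $a_\C$, so conjugation by $a_\C$ turns $\alpha$ into a tensor autoequivalence of $(\C \bt \C^{\op})^*_\C$. Applying the Morita correspondence of Remark~\ref{great EO} for the tensor category $\C \bt \C^{\op}$ with its regular bimodule $\C$, any such autoequivalence corresponds to an invertible $(\C \bt \C^{\op})$-module category, from which one extracts the desired $\C$-bimodule category $\M_\alpha$. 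Concretely, $\M_\alpha$ may be described as $\C$ itself equipped with a twisted $(\C \bt \C^{\op})$-action coming from the two braided tensor functors $\C \bt \C^{\rev} \to \Z(\C)$ --- the canonical embedding \eqref{functor G} and its composition with $\alpha$ --- and the half-braiding data encoded by $\alpha$ interpolates between them.

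The main obstacle is to show that this construction actually produces an invertible bimodule category and that $\Phi(\M_\alpha) \cong \alpha$ on the nose. Invertibility follows from the criterion of Remark~\ref{CM = C}: the functor $L: \C \to (\C^*_{\M_\alpha})^{\op}$ is an equivalence precisely because $\alpha$ is a tensor equivalence of $\Z(\C)$ and $a_\C$ is compatible with the two-sided $\C$-actions on $\C$. The identity $\Phi(\M_\alpha) \cong \alpha$ is verified by unwinding the formulas \eqref{aM left}, \eqref{aM right}, \eqref{bM left}, \eqref{bM right}: the half-braiding on $(Z,\gamma) \in \Z(\C)$ is transferred through $\alpha$ exactly as prescribed in the construction of $\M_\alpha$, so $b_{\M_\alpha}^{-1} \circ a_{\M_\alpha}$ literally equals $\alpha$. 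The reverse direction $\M_{\Phi(\M)} \simeq \M$ follows from an analogous unwinding, completing the isomorphism. The delicate point throughout is keeping track of how the braiding of $\Z(\C)$ enters the middle associativity constraint \eqref{middle associativity} of the bimodule category via the left module structures \eqref{aM left} versus \eqref{bM left}, so the key diagram chase is in matching these structures coherently without appealing to semisimplicity.
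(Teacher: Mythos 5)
Your multiplicativity argument for $\Phi$ is close in spirit to the paper's (which argues directly that right multiplication by $Z$ on $\M\bt_\C\N$ is isomorphic to middle multiplication by $\Phi(\N)(Z)$, hence to left multiplication by $(\Phi(\M)\circ\Phi(\N))(Z)$), though your displayed identification of $(\C\bt\C^{\op})^*_{\M\bt_\C\N}$ as a ``composition'' of dual categories is not meaningful as written. The genuine gap is in the inverse construction, and it breaks in three ways. First, Remark~\ref{great EO} is a $2$-equivalence between module categories over $\C\bt\C^{\op}$ and over its dual; it does not convert a tensor autoequivalence of $(\C\bt\C^{\op})^*_\C\simeq\Z(\C)$ into an invertible $(\C\bt\C^{\op})$-module category. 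What a tensor autoequivalence $\beta$ of $\Z(\C)$ naturally produces is the quasi-trivial invertible $\Z(\C)$-bimodule category ${}_\beta\Z(\C)$, hence (by Morita invariance of the Brauer--Picard group) an element of $\BrPic(\C\bt\C^{\op})$ --- which is a different group from $\BrPic(\C)$, the group of invertible $(\C\bt\C^{\op})$-\emph{module} categories with invertibility taken for $\bt_\C$. Second, your concrete candidate ``$\M_\alpha=\C$ with a twisted action'' cannot realize a general element of $\BrPic(\C)$: an invertible bimodule category need not be equivalent to $\C$ as an abelian category. Already for pointed $\C=\C(A,q)$ (Section~\ref{pointed example}, Proposition~\ref{B,beta}) the invertible module category $\M(B,\beta)$ with $\beta$ nondegenerate has $|A/B|$ simple objects, e.g.\ exactly one when $B=A$ admits a nondegenerate $\beta$ with $\beta(x,x)=q(x)$, whereas $\C$ has $|A|$; moreover a braided autoequivalence of $\Z(\C)$ generally does not preserve the image of $\C$ in $\Z(\C)$, so there is no canonical autoequivalence of $\C$ to twist by. Third, nothing in your mechanism uses that $\alpha$ is braided; since $\Aut^{br}(\Z(\C))$ is in general a proper subgroup of all tensor autoequivalences of $\Z(\C)$ and the content of the theorem is that the image of $\Phi$ is exactly the braided ones, a recipe that accepts an arbitrary tensor autoequivalence cannot be an inverse to $\Phi$.

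The missing idea is that the inverse goes through an algebra in $\Z(\C)$ rather than through a twist of the regular bimodule category. Given $\alpha\in\Aut^{br}(\Z(\C))$, the paper sets $L_\alpha:=\alpha^{-1}(I(\be))$, where $I$ is the right adjoint of the forgetful functor $F:\Z(\C)\to\C$; it proves that $F(L_\alpha)$ is an exact algebra in $\C$ (via Lemma~\ref{upmc} and Remark~\ref{great EO} --- the step that needs care precisely because semisimplicity is not assumed), decomposes it into indecomposable exact algebras $L_\alpha^i$, shows that the composition \eqref{crucial equiv} is a tensor equivalence, and defines $\Psi(\alpha)=\C_{L_\alpha^i}$ as in \eqref{Psi}, with the identities $\Phi\circ\Psi=\id$ and $\Psi\circ\Phi=\id$ verified as in \cite{ENO}. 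The braided structure of $\alpha$ and the properties of $I(\be)$ are exactly what make this construction land in $\BrPic(\C)$, and none of this is recoverable from the Morita formalism you invoke.
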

\begin{proof}
To see that $\Phi$ is a homomorphism  observe that the  $\C$-bimodule 
functor of  right multiplication by an object $Z\in \Z(\C)$ on  $\M \bt_\C \N$, where
$\M$ and $\N$ are invertible $\C$-bimodule categories, 
is isomorphic  to the well-defined functor of ``middle" multiplication by $\left(\Phi(\N)\right)(Z)$,
which, in turn,  is isomorphic to the functor of left multiplication by $\left(\Phi(\M)\circ \Phi(\N)\right)(Z)$.
This gives a natural isomorphism of tensor functors 
$\Phi(\M) \circ\Phi(\N) \cong \Phi(\M\bt_\C \N)$.  Hence, $\Phi$ is a homomorphism.

Let us recall the construction of the map
\begin{equation}
\label{Psi}
\Psi : \Aut^{br}(\Z(\C)) \to \BrPic(\C).
\end{equation}
inverse to the homomorphism \eqref{big thing}. 

Let $F: \Z(\C)\to \C$ and $I:\C\to \Z(\C)$ denote the canonical forgetful functor and its right adjoint.
Given a braided autoequivalence $\alpha \in \Aut^{br}(\Z(\C))$ let $L_\alpha:=
\alpha^{-1}(I(\be))$. 
The category  ${}_{L_\alpha}\Z(\C)$ is a finite tensor category with respect to $\otimes_{L_\alpha}$.

Let us show that the algebra $F(L_\alpha) \in \C$ is exact, i.e., that the category ${}_{L_\alpha}\C$
of  $F(L_\alpha)$-modules in $\C$ is exact. By Lemma~\ref{upmc}  this category is equivalent
to $\Fun_{\Z(\C)}(\Z(\C)_{L_\alpha},\, \C)$ as a $\C$-module category. By Remark~\ref{great EO}
the latter category is exact as a $\Fun_{\Z(\C)}(\C,\, \C)$-module category. In particular, it is exact
as a $\C$-module category.

Let
\[
F(L_\alpha) = \bigoplus_{i\in J}\, L_\alpha^i
\]
be the decomposition of $F(L_\alpha)$ into a direct sum of indecomposable exact algebras in $\C$.  

For any $i\in J$ 
the composition
\begin{equation}
\label{crucial equiv}
\C \xrightarrow{\iota} {}_{L_\alpha} \Z(\C)  \xrightarrow{F}  {_{F(L_\alpha)} \C_{F(L_\alpha)}} 
\xrightarrow{\pi_i} {_{L_\alpha^i}\C_{L_\alpha^i}}
\end{equation}
is a tensor equaivalence, where 
\begin{equation}
\label{iota}
\iota : \C \xrightarrow{\sim} {}_{L_\alpha} \Z(\C)  :  X \mapsto  \alpha^{-1}(I(X))
\end{equation}
and $\pi_i$ is a projection from $ {}_{F(L_\alpha)} \C_{F(L_\alpha)}  = \oplus_{i,j\in J}\, {}_{L_i} \C_{L_j} $ to the  $(i,\,i)$ component.

Hence, $\C_{L_i}$ gets a structure of an invertible $\C$-bimodule category.  Its equivalence
class  does not depend on a particular $i\in J$. One sets $\Psi(\alpha) := \C_{L_i}$.

The verification of identities $\Phi \circ \Psi =\id$ and $\Psi\circ \Phi =\id$ is the same as 
in \cite[Section 5.3]{ENO}.
\end{proof}

\begin{remark}
Note that $\BrPic(\C)$ and $\Aut^{br}(\Z(\C))$ are monoidal groupoids (i.e., monoidal categories
in which every object is invertible)
In fact, the assignment \eqref{big thing} is a monoidal equivalence rather than just a group isomorphism,
see \cite[Section 5]{ENO}.
\end{remark}

\subsection{The image of $\Pic(\C)$ in $\Aut^{br}(\Z(\C))$}
\label{Picard-braided}

Recall from Section~\ref{modules over braided} that  the group $\BrPic(\C)$
contains a subgroup $\Pic(\C)$ consisting of equivalence classes of invertible
$\C$-module categories (regarded as one-sided $\C$-bimodule categories).

Our goal now is to describe the image of $\Pic(\C)$ in $\Aut^{br}(\Z(\C))$ under isomorphism~\eqref{big thing}.

Let $\Aut^{br}(\Z(\C); \C) \subset \Aut^{br}(\Z(\C))$  denote the subgroup consisting of 
isomorphisms classes of  braided autoequivalences of $\Z(\C)$ trivializable on $\C$,
see Definition~\ref{trivializable def}. 

The  next Theorem was suggested to us by V.~Drinfeld. 

\begin{theorem}
\label{Drinfeld's Theorem}
Let $\C$ be a braided tensor category.
The canonical isomorphism $\Phi: \BrPic(\C) \xrightarrow{\simeq} \Aut^{br}(\Z(\C))$ restricts to an isomorphism
\begin{equation}\label{driso}
\Phi |_{\Pic(\C)} :  \Pic(\C) \xrightarrow{\simeq} \Aut^{br}(\Z(\C); \C).
\end{equation}
\end{theorem}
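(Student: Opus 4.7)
The plan is to reduce the theorem to the identification $\Phi(\Pic(\C)) = \Aut^{br}(\Z(\C); \C)$, using the fact from Theorem~\ref{BrPic=Autbr theorem} that $\Phi$ is already an isomorphism $\BrPic(\C) \xrightarrow{\sim} \Aut^{br}(\Z(\C))$. The starting point is to unpack $\Phi(\M)|_{\C}$: for $(X, c_{-, X}) \in \C \subset \Z(\C)$, the formulas \eqref{aM left} and \eqref{bM left} identify $a_\M(X, c_{-, X})$ and $b_\M(X, c_{-, X})$ with the ``left multiplication by $X$'' and ``right multiplication by $X$'' endofunctors of $\M$, each carrying its induced $(\C \bt \C^*_\M)$-module structure built from associativities and the half-braiding $c_{Y, X}$. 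Consequently, $\Phi(\M)|_{\C} \cong \id_{\C}$ as tensor functors is equivalent to the existence of a tensor natural isomorphism $\xi: a_\M|_\C \xrightarrow{\sim} b_\M|_\C$ of $(\C \bt \C^*_\M)$-module functors, whose components are natural isomorphisms $\xi_X^M: X*M \xrightarrow{\sim} M*X$.

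For the forward inclusion $\Phi(\Pic(\C)) \subseteq \Aut^{br}(\Z(\C); \C)$, suppose $\M$ is one-sided with structural maps $d_{M, X}: M*X \to X*M$ satisfying the coherences \eqref{coh1} and \eqref{coh2}. I set $\xi_X^M := d_{M, X}^{-1}$ and verify it gives the required tensor natural isomorphism. The key matching is that the half-braiding $c_{Y, X}$ appearing in \eqref{aM left} is precisely the same braiding that enters the middle associativity constraint \eqref{middle associativity} of the one-sided bimodule structure on $\M$; consequently, compatibility of $\xi_X$ with the left $\C$-module structures \eqref{aM left}, \eqref{bM left} is literally the content of \eqref{coh2}, while the tensor-functoriality $\xi_{X\otimes Y} = \xi_X \circ (X * \xi_Y)$ (modulo associativities) is precisely \eqref{coh1}. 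Compatibility with the right $\C^*_\M$-module structures \eqref{aM right}, \eqref{bM right} follows from the naturality of $d$ together with the right $\C$-module associativity \eqref{right associativity}.

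For the reverse inclusion, given $\alpha \in \Aut^{br}(\Z(\C); \C)$, I would set $\M := \Psi(\alpha)$. By Theorem~\ref{BrPic=Autbr theorem} this is an invertible $\C$-bimodule category with $\Phi(\M) \cong \alpha$, so $\Phi(\M)|_{\C} \cong \id$ produces a tensor natural isomorphism $\xi: a_\M|_\C \xrightarrow{\sim} b_\M|_\C$ of $(\C\bt\C^*_\M)$-module functors. Setting $d_{M, X} := (\xi_X^M)^{-1}$ and running the previous calculation in reverse, the module- and tensor-functoriality of $\xi$ translate term-by-term into the diagrams \eqref{coh1} and \eqref{coh2}, exhibiting $\M$ as one-sided and hence representing an element of $\Pic(\C)$.

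The principal obstacle is the bookkeeping of the many associativity constraints and braiding instances woven through the definitions \eqref{aM left}--\eqref{bM right} and the one-sided axioms \eqref{coh1}--\eqref{coh2}. The conceptual observation that makes the diagram chase manageable is that the half-braiding on the object $(X, c_{-, X}) \in \C \hookrightarrow \Z(\C)$ is literally the braiding of $\C$, and this is exactly the braiding that enters the one-sided bimodule coherences; after this identification the two systems of diagrams are seen to encode the same data, and the equivalence becomes transparent.
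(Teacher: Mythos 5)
Your proof of the inclusion $\Phi(\Pic(\C))\subseteq\Aut^{br}(\Z(\C);\C)$ is essentially the paper's: both rest on the observation that the half-braiding of an object of $\C\subset\Z(\C)$ is the braiding of $\C$, which is exactly what enters the one-sided structure, so that $a_\M|_\C$ and $b_\M|_\C$ agree (the paper verifies this by writing out \eqref{aM computation}--\eqref{bM computation1}; you phrase it through the maps $d_{M,X}$). Where you genuinely diverge is the surjectivity half. The paper does not invert the dictionary abstractly: it takes $\M=\Psi(\alpha)=\C_{F(L_\alpha)}$, uses trivializability of $\alpha$ on $\C$ to compute via \eqref{iota new} that the right action is $M*X\cong M\ot X$ with the $F(L_\alpha)$-action twisted by the braiding, and then takes $d_{M,X}=c_{M,X}$, so that \eqref{coh1} and \eqref{coh2} reduce to hexagon axioms --- a very short check. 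You instead prove the converse of the forward step in the abstract: a monoidal isomorphism $\xi\colon a_\M|_\C\xrightarrow{\sim} b_\M|_\C$ of bimodule functors yields a one-sided structure $d:=\xi^{-1}$ on an arbitrary invertible $\C$-bimodule category, and you then apply this to $\Psi(\alpha)$ using $\Phi(\Psi(\alpha))\cong\alpha$. This is legitimate and in some ways cleaner (it isolates the sharper statement that an invertible $\C$-bimodule category is one-sided if and only if $\Phi(\M)$ is trivializable on $\C$, without touching the algebra $F(L_\alpha)$), but it puts all the weight on the coherence dictionary, and there the translation is not quite ``term-by-term'' as you assert: the compatibility of $\xi$ with the left module structures is indeed exactly \eqref{coh2}, but monoidality of $\xi$ is not literally \eqref{coh1} --- the tensor structure of $b_\M|_\C$ (coming through the identification $\C^*_\M\simeq\C^{\opp}$) itself contains a braiding, and the horizontal composite produces $d$ at the shifted object $Y*M$ rather than at $M$, so one must combine monoidality with \eqref{coh2} and naturality before \eqref{coh1} drops out; moreover, since the right action of $\Psi(\alpha)$ is a priori unrelated to its left action, the dictionary must be set up with the general formula \eqref{bM right} for the right structure of $b_\M$, not with the specialized one-sided formulas. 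None of this breaks your argument, but it is the genuinely nontrivial bookkeeping, and it is precisely what the paper's concrete computation of $\Psi(\alpha)$ is designed to bypass.
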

\begin{proof}
First, let us show that $\Phi(\Pic(\C)) \subset \Aut^{br}(\Z(\C); \C)$.
Let $\M$ be an invertible one-sided $\C$-module category.
Let  
$\Phi(\M) \in \Aut^{br}(\Z(\C))$
be the braided autoequivalence  of $\Z(\C)$ defined in Section~\ref{Here Phi is defined}.
The equivalences $a_\M$ and $b_\M$ defined in \eqref{ZC = ZCM} and \eqref{bM}
can be explicitly described as follows.  
Let $(Z,\, \gamma)$ be an object in $\Z(\C)$, where 
\[
\gamma_X: X\ot  Z \to Z\ot X,\quad X\in \C,
\]
is the half braiding.
Then $a_\M(Z,\, \gamma)(M) = Z* M$ 
and its left and right $\C$-module functor structures are found by translating
\eqref{aM left} and \eqref{aM right} to our setting: 
\begin{equation}
\label{aM computation}
\xymatrix{
X * (Z * M)  \ar[d]^{a_{X,Z,M}} \ar[rrr]^{\sim}  &&&  Z * (X * M) \\
 (X\ot Z) * M  \ar[rrr]^{\gamma_X} &&& (Z\ot X) * M \ar[u]_{a_{Z,X, M}^{-1}}
}
\end{equation}
and 
\begin{equation}
\label{aM computation1} 
\xymatrix{
(Z * M) * Y  \ar@{=}[d]    \ar[rrr]^{\sim} &&& Z* (M * Y)  \ar@{=}[d]    \\
Y* (Z*M) \ar[r]^{a_{Y, Z, M}} & (Y \ot Z) *M \ar[r]^{c_{Z,Y}^{-1}} & (Z \ot Y) *M \ar[r]^{a_{Z,Y,M}^{-1}} & Z*(Y*M),
}
\end{equation}
for all $X,\,Y \in \C$ and $M\in \M$, where $a$ denotes the left $\C$-module associativity constraint of $\M$. 

Also, $b_\M(Z,\, \gamma)(M) = M * Z  =  Z * M$  as a functor and its
left and right $\C$-module functor structures are found from \eqref{bM left} and \eqref{bM right}:
\begin{equation}
\label{bM computation}
\xymatrix{ 
X * (M *  Z)   \ar@{=}[d]    \ar[rrr]^{\sim} &&& (X * M ) * Z  \ar@{=}[d]    \\
X*(Z* M) \ar[r]^{ a_{X, Z, M}} &  (X\ot Z)*M \ar[r]^{c_{X,Z}} & (Z\ot X)*M \ar[r]^{a_{Z,X,M}^{-1}} & Z*(X*M)
}
\end{equation}
and
\begin{equation}
\label{bM computation1}
\xymatrix{
(M*Z)*Y  \ar@{=}[d]    \ar[rrr]^{\sim} &&& (M * Y)*Z \ar@{=}[d]  \\
Y*(Z*M) \ar[d]^{a_{Y, Z,M}} &&&  Z*(Y*M)          \\
(Y\ot Z)*M  \ar[r]^{c_{Y,Z}} & (Z\ot Y)*M  \ar[r]^{\gamma_Y^{-1}} & (Y\ot Z)*M  \ar[r]^{c_{Z,Y}^{-1}} & (Z\ot Y)*M
\ar[u]_{a_{Z,Y,M}^{-1}}
}
\end{equation}
for all $X,\,Y \in \C$ and $M\in \M$. 

The diagrams \eqref{bM computation} and \eqref{aM computation1}  are nothing but middle associativity
isomorphism \eqref{middle associativity} and its inverse.  The diagram \eqref{bM computation1} uses the right
$\C$-module associativity \eqref{right associativity}  and its inverse as well as the half braiding of $Z$. 

Since $\C$ is embedded into $\Z(\C)$ via 
\[
Z \mapsto (Z,\, c_{-,Z}),
\]
i.e., $\gamma_X = c_{X,Z}$ in this case, 
we see from  \eqref{aM computation}, \eqref{aM computation1} and \eqref{bM computation}, 
\eqref{bM computation1} that
the restrictions of  $a_\M$ and $b_\M$ on the subcategory $\C \subset \Z(\C)$ coincide, i.e.,
$\Phi(\M)$ is trivializable on $\C$. So  $\Phi(\Pic(\C)) \subset \Aut^{br}(\Z(\C); \C)$.

It remains to show that $\Phi(\Pic(\C)) = \Aut^{br}(\Z(\C); \C)$.
Let  $\alpha \in  \Aut^{br}(\Z(\C); \C)$. 
We need to  show that the equivalence class of invertible $\C$-bimodule 
category $\M:=\Psi(\alpha)$  (where $\Psi:  \BrPic(\C) \to \Aut^{br}(\Z(\C))$
is the inverse of $\Phi$, see \eqref{Psi}) is in $\Pic(\C)$.  

According to the description from the proof of Theorem~\ref{BrPic=Autbr theorem} 
$\M$ is equivalent  to any indecomposable component of the 
$\C$-module category $\C_{F(L_\alpha)}$  of left modules over the algebra
$F(L_\alpha)$, where   $ L_\alpha=  \alpha^{-1}(I(\be))\in \Z(\C)$.  
Thus, it suffices to show that the $\C$-bimodule category $\C_{F(L_\alpha)}$ is one-sided.

The left action of $X\in \C$ on $\C_{F(L_\alpha)}$ is via tensor multiplication: 
\begin{equation}
\label{M*X}
X*M  = X\ot M.
\end{equation}
The right action of $X$ is via module multiplication over $F(L_\alpha)$ 
with the image of $X$ under equivalence \eqref{crucial equiv}. Let us describe
this action explicitly. 
Since $I(X) \cong X\ot I(1)$ for all $X \in \C \subset \Z(\C)$ and $\alpha$ is trivializable on $\C$
we see that  equivalence \eqref{iota} in our situation becomes
\begin{equation}
\label{iota new}
\C \xrightarrow{\sim} {\Z(\C)}_{L_\alpha} : X \mapsto  X\ot L_\alpha.
\end{equation}
Therefore, the right action of $X$ on $\C_{F(L_\alpha)}$ is given by
\begin{equation}
\label{X*M}
M*X = M \ot_{F(L_\alpha)} (X\ot F(L_\alpha)) \cong  M \ot X
\end{equation}
for all $ X\in \C,\, M \in \C_{F(L_\alpha)}$. The action of  $F(L_\alpha)$ 
on $M*X  \cong M\ot X$ is given by
\[
M \ot X \ot F(L_\alpha) 
\xrightarrow{1\ot c_{X,F(L_\alpha)}}
M \ot F(L_\alpha)  \ot X \xrightarrow{\rho_M\ot 1} M \ot X, 
\]
where we omit the associativity constraints. Here
$\rho_M:M\ot F(L_\alpha)\to M$ denotes the $F(L_\alpha)$-module structure 
on $M$.

We have a natural family of $F(L_\alpha)$-module isomorphisms 
\[
d_{M, X} := c_{M, X} : M \ot X \to X \ot M. 
\]
To show that the $\C$-bimodule category $\C_{F(L_\alpha)}$
is one sided we  need to check that isomorphisms $d_{X, M}$
satisfy commutative diagrams \eqref{coh1} and \eqref{coh2}.
But these diagrams are nothing but hexagon axioms of the braiding.

Thus, $\Aut^{br}(\Z(\C); \C) \subset \Phi(\Pic(\C))$ and the proof
is complete.
\end{proof}

\subsection{A characterization of the Picard crossed module}

Let $\C$ be a finite braided tensor category. There is a canonical homomorphism
\begin{equation}
\label{Sigma}
\Sigma: \Aut^{br}(\Z(\C); \C)  \to \Aut^{br}(\C)
\end{equation}
defined as follows. 
Every braided autoequivalence $\alpha \in \Aut^{br}(\Z(\C))$ trivializable on $\C$ maps  the centralizer
$\C$ in $\Z(\C)$ to itself. This centralizer is $\C^\rev \subset \Z(\C)$.  Hence, $\alpha$ restricts
to a braided autoequivalence  of $\C^\rev$, i.e., to an element of $\Aut^{br}(\C^\rev) = \Aut^{br}(\C)$
which we denote $\Sigma(\alpha)$. 

\begin{lemma}
\label{two inductions}
Let $\C$ be a braided tensor  category. The composition
\[
\Pic(\C) \xrightarrow{\Phi}  \Aut^{br}(\Z(\C); \C)  \xrightarrow{\Sigma} \Aut^{br}(\C)
\]
coincides with homomorphism $\partial :  \Pic(\C) \to \Aut^{br}(\C)$ defined in \eqref{def partial}.
\end{lemma}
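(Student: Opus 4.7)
Fix $\M\in\Pic(\C)$ and $X\in \C$, and consider the object $(X,\tilde c_{-,X}) \in \C^\rev \subset \Z(\C)$. Since $\Phi(\M)$ is trivializable on $\C$ by Theorem~\ref{Drinfeld's Theorem}, it preserves the centralizer $\C'=\C^\rev$; thus $\Phi(\M)(X,\tilde c_{-,X}) = (Y,\tilde c_{-,Y})$ for a unique $Y\in \C$, and by definition $Y=\Sigma(\Phi(\M))(X)$. The plan is to show $Y\cong \partial_\M(X)$. By the definition of $\Phi(\M)=b_\M^{-1}\circ a_\M$ in \eqref{PhiM}, this is equivalent to the existence of a natural isomorphism
\[
a_\M(X,\tilde c_{-,X}) \cong b_\M(Y,\tilde c_{-,Y})
\]
in $(\C\bt \C^*_\M)^*_\M$ with $Y\cong \partial_\M(X)$.

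The key observation is that, after forgetting the right $\C^*_\M$-module structure, the restrictions of $a_\M$ and $b_\M$ to $\C^\rev\subset \Z(\C)$ realize the two $\alpha$-inductions of Section~\ref{alpha}. Indeed, unraveling \eqref{aM computation} at $(X,\tilde c_{-,X})$, where the half-braiding is $\gamma_Y =\tilde c_{Y,X}=c_{X,Y}^{-1}$, shows that the inverse-direction structural isomorphism $Y*F(M)\to F(Y*M)$ for the underlying $\C$-module endofunctor of $a_\M(X,\tilde c_{-,X})$ is exactly $c_{X,Y}^{-1}\colon Y\otimes X\to X\otimes Y$, which is precisely the inverse-direction structure of $\alpha^+_\M(X)$ read off from \eqref{def alpha ind}. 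On the other hand, the left $\C$-module structure of $b_\M(Y,\tilde c_{-,Y})$ coming from the middle associativity \eqref{middle associativity} and \eqref{bM computation} involves only the braiding $c$ of $\C$ and is independent of the half-braiding; a direct comparison of its structural morphism $c_{X,Y}$ with that of $\alpha^-_\M(Y)$ from \eqref{def alpha ind} shows that the underlying $\C$-module endofunctor of $b_\M(Y,\tilde c_{-,Y})$ coincides with $\alpha^-_\M(Y)$.

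Consequently, the isomorphism $a_\M(X,\tilde c_{-,X})\cong b_\M(Y,\tilde c_{-,Y})$ in $(\C\bt\C^*_\M)^*_\M$ produced by $\Phi(\M)$ descends to a natural isomorphism $\alpha^+_\M(X)\cong \alpha^-_\M(Y)$ in $\C^*_\M$. Comparing with the defining relation \eqref{def partial M} and using that $\alpha^-_\M$ is an equivalence when $\M$ is invertible, this forces $Y\cong \partial_\M(X)$. Naturality in $X$ and multiplicativity in $\M$ then give the equality of homomorphisms $\Sigma\circ \Phi|_{\Pic(\C)} = \partial$.

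The main obstacle is the bookkeeping in the second paragraph: the formulas \eqref{aM computation} and \eqref{bM computation} express $\C$-module structures in the reverse direction $X*F(M)\to F(X*M)$, while \eqref{def alpha ind} describes $\alpha^\pm_\M$ in the forward direction $F(X*M)\to X*F(M)$. Matching them requires tracking inverses and the hexagon axioms; in particular, it is precisely the difference between $\tilde c_{Y,X}=c_{X,Y}^{-1}$ and $c_{Y,X}$ that distinguishes $\alpha^+_\M$ from $\alpha^-_\M$ on the side of $a_\M$, and this is what makes the restriction of $\Phi(\M)$ to $\C^\rev$ non-trivial even though its restriction to $\C$ is the identity.
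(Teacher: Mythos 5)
Your proof is correct and follows essentially the same route as the paper: both identify the restriction of $a_\M$ to $\C^\rev$ with $\alpha^+_\M$ and the restriction of $b_\M$ (whose left $\C$-module structure is the middle associativity, independent of the half braiding) with $\alpha^-_\M$, and then read off $\partial_\M=(\alpha^-_\M)^{-1}\circ\alpha^+_\M$ from $\Phi(\M)=b_\M^{-1}\circ a_\M$. Your bookkeeping of the inverse-direction structural morphisms $c_{X,Y}^{-1}$ versus $c_{X,Y}$ matches the paper's commutative squares \eqref{request of my dear friend}, so no changes are needed.
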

\begin{proof}
We need to show that for each invertible
$\C$-module category $\M$  the restriction of the braided autoequivalence $\Phi(\M)$
on $\C^\rev \subset \Z(\C)$  is isomorphic to $\partial_\M$ defined in \eqref{def partial M}. 
This result follows from comparing definitions.  Indeed, $\Phi(\M) = b_\M^{-1}\circ a_\M$,
where $a_\M$ and $b_\M$  are defined in \eqref{ZC = ZCM} and \eqref{bM}, and 
$\partial_\M = (\alpha^{-}_\M)^{-1}\circ \alpha^{+}_\M$, where $\alpha^{\pm}_\M$ are defined 
in \eqref{def alpha ind}.  

Thus, it suffices to check commutativity of the following diagrams
\begin{equation}
\label{request of my dear friend}
\xymatrix{
\Z(\C) \ar[r]^(.4){a_\M} & (\C\bt \C^{\opp})^*_\M   \ar[dd]  &  &
\Z(\C) \ar[r]^(.4){b_\M} &  (\C\bt \C^{\opp})^*_\M   \ar[dd] \\
&& \text{and} && \\
\C^\rev \ar[uu] \ar[r]^{\alpha_\M^+} &  \C^*_\M & &  
\C^\rev \ar[uu] \ar[r]^{\alpha_\M^-} &  \C^*_\M,
}
\end{equation}
where  the  arrows $\C^\rev \to \Z(\C)$ are given by the embedding \eqref{embeddings of C to Z(C)}
and the arrows $(\C\bt \C^{\opp})^*_\M \to \C^*_\M$ are given by the restriction of $\C$-bimodule
functors to left $\C$-module functors.  The commutativity is checked directly using definitions
of $\alpha_\M^\pm$ in Section~\ref{alpha} and explicit formulas \eqref{aM computation}  and
\eqref{bM computation} for the $\C$-module functor structures of $a_\M(Z,\, \gamma)$ and 
$b_\M(Z,\, \gamma)$, where $(Z,\,\gamma)$ is an object in $\Z(\C)$.  In the bottom row of
\eqref{request of my dear friend}  we use that $\C^\rev =\C$
as tensor categories. 

Hence,  $\Phi(\M)|_{\C^\rev} = \partial_\M$ in  $\Aut^{br}(\C)= \Aut^{br}(\C^\rev)$. 
\end{proof}

\bre\lb{imp}
Lemma \ref{two inductions} shows in particular that the homomorphism $\partial :  \Pic(\C) \to \Aut(\C)$ defined in \eqref{def partial} factors through $\Pic(\C) \to \Aut^{br}(\C)$.
\ere

The next corollary  was established in \cite{ENO} for braided fusion categories.

\begin{corollary}
Let $\C$ be a factorizable braided tensor category.  Then  $\partial: \Pic(\C) \to \Aut^{br}(\C)$
is an isomorphism. 
\end{corollary}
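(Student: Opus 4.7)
The plan is to combine Lemma~\ref{two inductions}, Theorem~\ref{Drinfeld's Theorem}, and the factorizability hypothesis. By Lemma~\ref{two inductions} we have $\partial=\Sigma\circ\Phi|_{\Pic(\C)}$, and by Theorem~\ref{Drinfeld's Theorem} the map $\Phi|_{\Pic(\C)}:\Pic(\C)\xrightarrow{\simeq}\Aut^{br}(\Z(\C);\C)$ is an isomorphism. Hence it suffices to prove that the restriction homomorphism
\[
\Sigma:\Aut^{br}(\Z(\C);\C)\to\Aut^{br}(\C^\rev)=\Aut^{br}(\C)
\]
is an isomorphism under the factorizability assumption.

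Factorizability means the canonical braided tensor functor $G:\C\bt\C^\rev\to\Z(\C)$ from \eqref{functor G} is an equivalence. Under this equivalence, the embeddings \eqref{embeddings of C to Z(C)} identify $\C$ with $\C\bt\Vect$ and $\C^\rev$ with $\Vect\bt\C^\rev$; in particular $\C$ and $\C^\rev$ mutually centralize each other in $\Z(\C)$. To build an inverse to $\Sigma$, I would send $\beta\in\Aut^{br}(\C^\rev)$ to the braided tensor autoequivalence $\id_\C\bt\beta$ of $\C\bt\C^\rev\simeq\Z(\C)$. This is manifestly trivializable on $\C=\C\bt\Vect$ and restricts on $\C^\rev=\Vect\bt\C^\rev$ to $\beta$, showing $\Sigma$ is surjective and providing a canonical splitting.

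For injectivity, let $\alpha\in\Aut^{br}(\Z(\C);\C)$ with $\Sigma(\alpha)\cong\id_{\C^\rev}$. Any braided autoequivalence preserves centralizers, so $\alpha$ maps $\C^\rev=\C'$ to itself, and by hypothesis the restrictions of $\alpha$ to $\C$ and to $\C^\rev$ are both isomorphic to the respective identity functors. Using the equivalence $G$, any object $(Z,\gamma)\in\Z(\C)$ is a tensor product $X\ot Y$ of objects coming from $\C$ and $\C^\rev$; since $\alpha$ is a tensor functor and is trivialized on each factor, the chosen trivializations assemble into a tensor isomorphism $\alpha\cong\id_{\Z(\C)}$ (the compatibility of the two trivializations on $X\ot Y$ reduces to the braided compatibility of $G$, which holds because $\C$ and $\C^\rev$ centralize each other in $\Z(\C)$). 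Thus $\alpha$ is trivial in $\Aut^{br}(\Z(\C);\C)$, and $\Sigma$ is an isomorphism.

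The only genuine point is the last one: verifying that two compatible trivializations, one on $\C$ and one on $\C^\rev$, glue to a tensor isomorphism on all of $\Z(\C)$. This is where factorizability is essential, since without it the image of $G$ would fail to generate $\Z(\C)$ and there would be room for nontrivial autoequivalences invisible to $\Sigma$. Once this gluing is done, composing the splitting $\beta\mapsto\id_\C\bt\beta$ with $\Phi|_{\Pic(\C)}^{-1}$ furnishes the inverse of $\partial$.
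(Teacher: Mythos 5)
Your proposal is correct and follows essentially the same route as the paper: the paper's proof is precisely the observation that factorizability gives $\Z(\C)\cong\C\bt\C^\rev$ and hence $\Aut^{br}(\Z(\C);\C)=\Aut^{br}(\C^\rev)=\Aut^{br}(\C)$, combined (implicitly) with Theorem~\ref{Drinfeld's Theorem} and Lemma~\ref{two inductions}. You simply spell out the surjectivity ($\beta\mapsto\id_\C\bt\beta$) and injectivity (gluing of trivializations) that the paper leaves as an immediate identification.
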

\begin{proof}
We have $\Z(\C)\cong \C \bt \C^\rev$ and $\Aut^{br}(\Z(\C); \C) =\Aut^{br}(\C^\rev) = \Aut^{br}(\C)$.
\end{proof}

There is canonical action of  $\Aut^{br}(\C)$ on $\Aut^{br}(\Z(\C); \C)$ defined as follows.
Any tensor autoequivalence $g$ of $\C$ induces a braided autoequivalence $\tilde{g}\in \Aut^{br}(\Z(\C))$:
\[
\tilde{g}(Z,\,\gamma) = (g(Z),\, \gamma^g), 
\]
where $(\gamma^g)_X : X \ot g(Z)\xrightarrow{\simeq} g(Z)\ot X$ is given by  $(\gamma^g)_X = g (\gamma_{g^{-1}(X)})$. 

For all $g\in \Aut^{br}(\C)$ and $\alpha \in \Aut^{br}(\Z(\C); \C)$  set
\begin{equation}
\label{conjugation action}
{}^g \alpha := \tilde{g} \circ \alpha \circ \tilde{g}^{-1}. 
\end{equation}
It is clear that ${}^g \alpha$ is trivializable on $\C$, i.e., \eqref{conjugation action} defines the required action. 

\begin{lemma}
\label{eqphi}
The isomorphism $\Phi : \Pic(\C) \xrightarrow{\sim}  \Aut^{br}(\Z(\C); \C)$
is $\Aut^{br}(\C)$-equivariant, i.e., 
\[
\Phi ({}^g \M) = {}^g  \Phi (\M)
\]
for all $g\in \Aut^{br}(\C)$ and $\M \in \Pic(\C)$. 
\end{lemma}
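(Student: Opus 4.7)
The plan is to compute the equivalences $a_{{}^g\M}$ and $b_{{}^g\M}$ from Section~\ref{Here Phi is defined} directly in terms of $a_\M$, $b_\M$, and the lifted autoequivalence $\tilde{g}$, and then to assemble $\Phi({}^g\M) = b_{{}^g\M}^{-1}\circ a_{{}^g\M}$ into $\tilde{g}\circ \Phi(\M)\circ \tilde{g}^{-1}$.  The key observation is that, since the left action in ${}^g\M$ is $X\odot M = g^{-1}(X)*M$, a $\C$-module endofunctor of $\M$ is canonically a $\C$-module endofunctor of ${}^g\M$, giving an identification $\C^*_{{}^g\M} \simeq \C^*_\M$; this in turn identifies the ambient bimodule endofunctor categories $(\C\bt \C^*_{{}^g\M})^*_{{}^g\M} \simeq (\C\bt \C^*_\M)^*_\M$.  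Under this identification the functor $a_{{}^g\M}(Z,\gamma)$, defined on ${}^g\M$ as left $\odot$-multiplication by $Z$, becomes the functor $M\mapsto g^{-1}(Z)*M$ on $\M$.

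The next step is to verify that the left $\C$-module and right $\C^*_\M$-module structures of this functor, as read off from \eqref{aM left} and \eqref{aM  right}, coincide with those of $a_\M(\tilde{g}^{-1}(Z,\gamma)) = a_\M(g^{-1}(Z),\,\gamma^{g^{-1}})$.  On the left-module side, reparametrizing $X = g(X')$ converts the half-braiding step $\gamma_X$ used by ${}^g\M$ into the map $(\gamma^{g^{-1}})_{X'} = g^{-1}(\gamma_X)$ acting on $X'\ot g^{-1}(Z)$ in $\M$, which is exactly the datum used by $a_\M(\tilde{g}^{-1}(Z,\gamma))$; the right module structures agree tautologically via the identification $\C^*_{{}^g\M}\simeq \C^*_\M$.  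Hence $a_{{}^g\M} = a_\M\circ \tilde{g}^{-1}$.  An entirely analogous calculation, using that one-sidedness of ${}^g\M$ transports through $g$ and that the middle and right associativity constraints \eqref{middle associativity}, \eqref{right associativity} of ${}^g\M$ arise from those of $\M$ by the same reparametrization, gives $b_{{}^g\M} = b_\M \circ \tilde{g}^{-1}$.

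Combining these identities yields
\[
\Phi({}^g\M) = b_{{}^g\M}^{-1}\circ a_{{}^g\M} = \tilde{g}\circ b_\M^{-1}\circ a_\M\circ \tilde{g}^{-1} = \tilde{g}\circ \Phi(\M)\circ \tilde{g}^{-1} = {}^g\Phi(\M),
\]
as required.  The main obstacle is the coherence verification in the second step: one has to unpack the diagrams \eqref{aM computation}--\eqref{bM computation1}, track how the module associativity constraints and the half-braiding interact after twisting by $g$, and check that all data are compatible with the chosen identifications of $\C^*_{{}^g\M}$ with $\C^*_\M$ and of the ambient bimodule functor categories.  Once those identifications are pinned down, the structural matching reduces to the definition of $\gamma^{g^{-1}}$ and the tensoriality of $g$.
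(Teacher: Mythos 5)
Your proposal is correct and follows essentially the same route as the paper: the paper's proof consists precisely of the two identities $a_{{}^g\M} = a_\M\circ\tilde{g}^{-1}$ and $b_{{}^g\M} = b_\M\circ\tilde{g}^{-1}$ (stated there as immediate) followed by the same composition $\Phi({}^g\M)=b_{{}^g\M}^{-1}\circ a_{{}^g\M}=\tilde{g}\circ\Phi(\M)\circ\tilde{g}^{-1}$. Your extra unpacking of the identification $\C^*_{{}^g\M}\simeq\C^*_\M$ and of how the half braiding transports to $\gamma^{g^{-1}}$ is exactly the verification the paper leaves implicit.
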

\begin{proof}
This is an immediate consequence of identities 
\[
a_{{}^g\M} = a_{\M} \circ \tilde{g}^{-1} \quad \mbox{and}  \quad b_{{}^g\M} = b_{\M} \circ \tilde{g}^{-1}.
\] 
We have $\Phi({}^g \M) = b_{{}^g\M} ^{-1} \circ a_{{}^g\M} = \tilde{g} \circ  \Phi(\M) \circ  \tilde{g}^{-1} =  {}^g  \Phi (\M)$. 
\end{proof}

\begin{corollary}
\label{Aut crossed module}
The pair of groups $(\Aut^{br}(\C),\,\Aut^{br}(\Z(\C); \C))$  along with the action
\eqref{conjugation action} and homomorphism
$\Sigma: \Aut^{br}(\Z(\C); \C) \to \Aut^{br}(\C)$ from \eqref{Sigma} is a crossed module.
\end{corollary}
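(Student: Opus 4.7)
The plan is to transport the Picard crossed module structure of Theorem~\ref{crmodthm} through the isomorphism $\Phi: \Pic(\C) \xrightarrow{\simeq} \Aut^{br}(\Z(\C);\C)$ supplied by Theorem~\ref{Drinfeld's Theorem}. Since all the compatibility pieces have already been assembled in the immediately preceding lemmas, the argument should be essentially formal.

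First I would note that $\Sigma$ is a group homomorphism: it sends a braided autoequivalence of $\Z(\C)$ trivializable on $\C$ to its restriction to the centralizer $\C^\rev \subset \Z(\C)$, and restriction respects composition. Lemma~\ref{two inductions} and Lemma~\ref{eqphi} then combine into the single statement that $\Phi$ is an $\Aut^{br}(\C)$-equivariant isomorphism satisfying $\Sigma \circ \Phi = \partial$.

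For axiom~\eqref{crossed module1}, I would take $\alpha \in \Aut^{br}(\Z(\C);\C)$ and $g \in \Aut^{br}(\C)$, write $\alpha = \Phi(\M)$ with $\M \in \Pic(\C)$, and compute
\[
\Sigma({}^g\alpha) \;=\; \Sigma(\Phi({}^g\M)) \;=\; \partial_{{}^g\M} \;=\; g\,\partial_\M\,g^{-1} \;=\; g\,\Sigma(\alpha)\,g^{-1},
\]
where the successive equalities use Lemma~\ref{eqphi}, Lemma~\ref{two inductions}, axiom~\eqref{crossed module1} for $\mathfrak{P}(\C)$, and Lemma~\ref{two inductions} once more. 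For axiom~\eqref{crossed module2}, writing $\alpha = \Phi(\M)$ and $\beta = \Phi(\N)$, the same three tools give
\[
{}^{\Sigma(\alpha)}\beta \;=\; {}^{\partial_\M}\Phi(\N) \;=\; \Phi({}^{\partial_\M}\N) \;=\; \Phi(\M \bt_\C \N \bt_\C \M^{-1}) \;=\; \alpha\,\beta\,\alpha^{-1},
\]
the penultimate equality being axiom~\eqref{crossed module2} for $\mathfrak{P}(\C)$ and the last one being multiplicativity of $\Phi$.

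There is no serious obstacle: the only things to double-check are that $\Sigma$ genuinely lands in $\Aut^{br}(\C)$ rather than merely $\Aut(\C)$, and that the prescription~\eqref{conjugation action} preserves the subgroup $\Aut^{br}(\Z(\C);\C)$. Both points are noted explicitly in the paragraphs defining $\Sigma$ and the action, so the verification reduces to assembling the diagrams above.
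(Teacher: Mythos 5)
Your proposal is correct and is exactly the paper's argument: the paper proves Corollary~\ref{Aut crossed module} by transporting the crossed module structure of $\mathfrak{P}(\C)$ through the isomorphism of Theorem~\ref{Drinfeld's Theorem}, citing Lemmas~\ref{two inductions} and \ref{eqphi}, which is what you carry out (just written out in more detail). No gaps.
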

\begin{proof}
Follows from Lemmas~\ref{two inductions} and \ref{eqphi}. 
\end{proof}

We will call the crossed module $(\Aut^{br}(\C),\,\Aut^{br}(\Z(\C); \C))$ the {\em autoequivalence} crossed module
of $\C$  and denote it  by $\mathfrak{A}(\C)$.  

\begin{corollary}
\label{P iso A}
The pair of group isomorphisms  $(\id_{\Aut^{br}(\C)},\, \Phi)$ is an isomorphism of crossed modules 
\begin{equation}
\mathfrak{P}(\C)\cong \mathfrak{A}(\C).
\end{equation}
\end{corollary}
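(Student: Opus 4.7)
The plan is to unwind the definition of a morphism of crossed modules from Definition~\ref{2Whitehead} and check that the three required conditions (both components being group isomorphisms, compatibility with the structural maps, and equivariance under the respective group actions) have already been established as separate lemmas earlier in the section.

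First I would recall what needs to be verified for $(\gamma,\varphi) = (\id_{\Aut^{br}(\C)},\,\Phi)$ to be a morphism $\mathfrak{P}(\C)\to\mathfrak{A}(\C)$ of crossed modules: namely, that $\gamma$ and $\varphi$ are group homomorphisms, that $\Sigma\circ\Phi = \id_{\Aut^{br}(\C)}\circ\partial = \partial$, and that $\Phi({}^g\M) = {}^{\id(g)}\Phi(\M) = {}^g\Phi(\M)$ for all $g\in\Aut^{br}(\C)$ and $\M\in\Pic(\C)$. For the morphism to be an isomorphism, both component maps must additionally be bijective.

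Next I would assemble the pieces. The fact that $\Phi$ restricts to a group isomorphism $\Pic(\C)\xrightarrow{\sim}\Aut^{br}(\Z(\C);\C)$ is exactly the content of Theorem~\ref{Drinfeld's Theorem}, and $\id_{\Aut^{br}(\C)}$ is trivially an isomorphism. The compatibility $\Sigma\circ\Phi = \partial$ with the two structural maps is precisely the statement of Lemma~\ref{two inductions}. The equivariance $\Phi({}^g\M) = {}^g\Phi(\M)$ with respect to the $\Aut^{br}(\C)$-action is the content of Lemma~\ref{eqphi}. Putting these together immediately yields that $(\id_{\Aut^{br}(\C)},\,\Phi)$ is an isomorphism of crossed modules.

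There is no real obstacle here: the corollary is a packaging result, and all the substantive work has been done. The only thing worth noting is that one should verify that the action of $\Aut^{br}(\C)$ on $\Aut^{br}(\Z(\C);\C)$ defined in \eqref{conjugation action} is in fact the one that appears in the crossed module axioms for $\mathfrak{A}(\C)$ — but this is precisely how $\mathfrak{A}(\C)$ was defined in Corollary~\ref{Aut crossed module}. Hence the proof consists of a single sentence invoking Theorem~\ref{Drinfeld's Theorem}, Lemma~\ref{two inductions}, and Lemma~\ref{eqphi}.
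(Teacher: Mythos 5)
Your proposal is correct and matches the paper's argument: the paper also proves this corollary by combining Lemma~\ref{two inductions} (compatibility of $\Sigma\circ\Phi$ with $\partial$) and Lemma~\ref{eqphi} (equivariance), with bijectivity supplied by Theorem~\ref{Drinfeld's Theorem}. Your write-up simply makes explicit the unwinding of Definition~\ref{2Whitehead} that the paper leaves implicit.
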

\begin{proof}
Follows from Lemmas~\ref{two inductions} and \ref{eqphi}.  
\end{proof}

\subsection{On the kernel and cokernel of  $\partial : \Pic(\C)\to \Aut^{br}(\C)$}

Since the Picard crossed module $\mathfrak{P}(\C)$ is isomorphic to the autoequivalence crossed module of $\mathfrak{A}(\C)$ the kernel of $\partial:\Pic(\C)\to \Aut^{br}(\C)$ is isomorphic to the kernel of the restriction map $\partial:\Aut^{br}(\Z(\C),\C)\to \Aut^{br}(\C)$.

The natural tensor embeddings $\Z_{sym}(\C)\hookrightarrow \C,\C^\rev$ allow us to look at $\C$ and $\C^\rev$ 
as $\Z_{sym}(\C)$-module categories.
The functor \eqref{functor G}: 
\[
\C\boxtimes\C^\rev\to\Z(\C)
\] 
is clearly balanced with respect to these module structures. 
Hence, it factors factors through $\C\boxtimes_{\Z_{sym}(\C)}\C^\rev$. 
Here the tensor product $\C\boxtimes_{\Z_{sym}(\C)}\C^\rev$ of module categories over a symmetric tensor category $\Z_{sym}(\C)$ has a natural structure of braided tensor category, see \cite{DNO}.  The image of $\C\boxtimes_{\Z_{sym}(\C)}\C^\rev$ in $\Z(\C)$
coincides with the full tensor subcategory $\C \vee \C^\rev$ generated by  $\C$ and $\C^\rev$ in $\Z(\C)$. 

\bpr\lb{ke}
The kernel of the restriction map $\partial:\Aut^{br}(\Z(\C),\C)\to \Aut^{br}(\C)$ coincides with the group $\Aut^{br}(\Z(\C),
\C \vee \C^\rev)$ of braided autoequivalences of $\Z(\C)$ trivialisable on $ \C \vee \C^\rev$.
\epr
\bpf
The kernel of the restriction map $\partial:\Aut^{br}(\Z(\C),\C)\to \Aut^{br}(\C)$ coincides with the subgroup $\Aut^{br}(\Z(\C))$ of braided autoequivalences of $\Z(\C)$, trivialisable on both $\C$ and $\C^\rev$. All we need to show is that a braided autoequivalence of $\Z(\C)$, which is trivialisable on both $\C$ and $\C^\rev$ is trivialisable on $\C \vee \C^\rev$.

A braided autoequivalence $F$ of $\Z(\C)$ stabilising both $\C$ and $\C^\rev$ and trivialisable on $\Z_{sym}(\C)$ fits into a commutative diagram:
$$\xymatrix{\C\boxtimes\C^\rev \ar[d]_{F\boxtimes F} \ar[r] \ar@/^12pt/[rr] & \C\boxtimes_{\Z_{sym}(\C)}\C^\rev \ar[d]_{F\boxtimes_{\Z_{sym}(\C)} F} \ar[r] & \Z(\C) \ar[d]_F \\ \C\boxtimes\C^\rev  \ar[r] \ar@/_12pt/[rr] &  \C\boxtimes_{\Z_{sym}(\C)}\C^\rev \ar[r] & \Z(\C)}$$
Thus a braided autoequivalence $F$ of $\Z(\C)$, which is trivialisable on both $\C$ and $\C^\rev$ is also trivialisable on 
$\C \vee \C^\rev$.
\epf

Note that there is a canonical homomorphism
\beq\lb{ker}j: \Pic(\Z_{sym}(\C))\to ker(\Pic(\C) \xrightarrow{\partial} \Aut^{br}(\C;\Z_{sym}(\C))),\eeq
given by the induction of module categories.
Namely, if $\M$ is an invertible $\Z_{sym}(\C)$-module category 
then 
\[
j(\M) =\C\bt_{\Z_{sym}(\C)} \M.
\]
To see that $j(\M)$ is in the kernel of $\partial$, let us take an algebra $A$ in $\Z_{sym}(\C)$ such that
$\M \simeq   \Z_{sym}(\C)_A$.  By Lemma~\ref{upmc}  $j(\M) =\Fun_{\Z_{sym}(\C)}(\C,\, \M) \cong \C_A$. 
The functors $\alpha^\pm_{j(\M)}$ coincide with each other since  $c_{X,A}=c^{-1}_{A,X}$ for all objects $X$ in $\C$,
i.e., $\partial (j(\M))$ is a trivial autoequivalence. 

Let $\Z_{sym}(\C)$ be the symmetric center of $\C$, see Section~\ref{Prelim centralizers}. 
Clearly the restrictions of $\alpha_\M^\pm$ to $\Z_{sym}(\C)$ coincide. 
Hence for an invertible $\M$ the autoequivalence $\partial_\M$ is trivializable on $\Z_{sym}(\C)$, 
i.e., the restriction of $\partial_\M$ to $\Z_{sym}(\C)$ is isomorphic to the identity functor.
Thus the homomorphism \eqref{def partial} factors as follows
$$\Pic(\C)\to \Aut^{br}(\C; \Z_{sym}(\C))\to  \Aut^{br}(\C).$$
Hence, the restriction map defines canonical homomorphism from the cokernel of $\partial$:
\begin{equation}
\label{coc}
coker(\Pic(\C)\xrightarrow{\partial} \Aut^{br}(\C)) \to \Aut^{br}(\Z_{sym}(\C)).
\end{equation}

\section{The Picard crossed module of a pointed braided fusion category}
\label{pointed example}
Let $A$ be a finite abelian group and let $q: A\to k^\times$ be a quadratic form on $A$.
In this Section we explicitly compute  the Picard crossed module of the pointed braided fusion
category $\C := \C(A,\,q)$ associated to the pair $(A,\, q)$ as in Example~\ref{quad form}. 

Note that  
$\C(A,\,q)^\rev\simeq \C(A,\,q^{-1})$. 

\subsection{Invertible module categories over a braided pointed fusion category}
\label{Alexei's parameterization}

The classification of module categories over pointed fusion categories is well known \cite{O2}.
Any indecomposable $\C$-module category $\M$ corresponds to a pair $(B,\, \gamma)$,
where  $B \subset A$ is a subgroup and $\gamma: B \times B \to k^\times$ is a function such that
\begin{equation}
\label{ac}
d(\gamma)(x,\,y,\,z) :=
\frac{\gamma(x+y,\, z) \gamma(x,\, y)}{\gamma(x,\, y+z)\gamma (y,\, z)} =\omega(x,\, y,\, z),\qquad
x,\,y,\, z\in B.
\end{equation}
Here $\omega : A^3\to k^\times$ is the $3$-cocycle defining the associativity constraint of $\C$. 

The pair $(B,\, \gamma)$ is constructed from $\M$ as follows. The simple objects of $\M$
form a transitive $A$-set and $B$ denotes the stabilizer of a point in this set.  The function 
$\gamma: B \times B \to k^\times$ comes from the module associativity constraint of $\M$.
This function  is determined by $\M$ up to a 2-coboundary. 

Let us define a function $\beta: B \times B \to k^\times$ by 
\begin{equation}
\label{def beta}
\beta(x,\, y) =  c(x,\, y) \frac{\gamma(x,\, y)}{\gamma(y,\, x)},\qquad x,\,y\in B. 
\end{equation}

\begin{proposition}
\label{betax}
The function \eqref{def beta} is bimultiplicative and satisfies 
\begin{equation}
\label{rep}
\beta(x,\, x)=q(x), \qquad \mbox{for all }\ x\in B.
\end{equation}
\end{proposition}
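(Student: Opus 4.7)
The identity $\beta(x,x) = q(x)$ is essentially a tautology: by the defining formula, $\beta(x,x) = c(x,x)\gamma(x,x)/\gamma(x,x) = c(x,x)$, which equals $q(x)$ by the definition of the quadratic form in Example~\ref{quad form}. So the entire content of the proposition lies in establishing bimultiplicativity, and by symmetry of the argument it suffices to prove it in the first variable, i.e., $\beta(x+y,z) = \beta(x,z)\beta(y,z)$ for all $x,y,z \in B$.

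My plan is to split the ratio $\beta(x+y,z)/(\beta(x,z)\beta(y,z))$ into two factors and evaluate each in terms of the $3$-cocycle $\omega$. The first factor is
\[
\frac{c(x+y,z)}{c(x,z)\,c(y,z)},
\]
which by the hexagon identity \eqref{ab cocycle B} equals $\omega(x,y,z)^{-1}\omega(x,z,y)\omega(z,x,y)^{-1}$. The second factor is the purely $\gamma$-dependent ratio
\[
\frac{\gamma(x+y,z)\,\gamma(z,x)\,\gamma(z,y)}{\gamma(z,x+y)\,\gamma(x,z)\,\gamma(y,z)}.
\]

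To evaluate this second factor, I apply the module-associativity cocycle identity \eqref{ac} three times, with the triples of arguments $(x,y,z)$, $(z,x,y)$, and $(x,z,y)$, using commutativity of $A$ (so that $\gamma(x+z,y) = \gamma(z+x,y)$ and similarly) to align the resulting expressions. A short direct computation then shows the $\gamma$-ratio equals $\omega(x,y,z)\,\omega(z,x,y)\,\omega(x,z,y)^{-1}$, which is exactly the inverse of the first factor. Hence the product is $1$, which is the desired bimultiplicativity.

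The main obstacle is purely bookkeeping: keeping track of which instances of the $\gamma$-cocycle identity to invoke so that after cancellation only $\gamma$-independent $\omega$-terms remain, and in the right combination to cancel the hexagon contribution. The only subtlety is the use of commutativity of $A$ to identify things like $\gamma(x+y,z)$ with $\gamma(y+x,z)$, which is needed to bring the cocycle relations into matching form. Bimultiplicativity in the second variable is proved in the same way, using the hexagon identity \eqref{ab cocycle A} in place of \eqref{ab cocycle B}.
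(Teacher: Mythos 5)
Your proposal is correct and follows essentially the same route as the paper: a direct computation using the module-associativity cocycle identity \eqref{ac} together with the hexagon relations \eqref{ab cocycle A}, \eqref{ab cocycle B}, with $\beta(x,x)=q(x)$ being immediate from \eqref{def beta}. The only difference is organizational — you verify multiplicativity in the first variable by separating the $c$-factor from the $\gamma$-factor, while the paper runs the analogous chain of equalities for the second variable in one pass — and your bookkeeping (three instances of \eqref{ac} at $(x,y,z)$, $(z,x,y)$, $(x,z,y)$ cancelling the hexagon contribution) does check out.
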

\begin{proof} For all $x,\,y,\,z\in B$ we compute
\begin{eqnarray*}
\lefteqn{ \beta(x,\, y+z)
= c(x,\, y+z)\, \frac{\gamma(x,\, y+z)\gamma(y,\,z)}{\gamma(y+z,\,x)\gamma(y,\,z)}  }\\
&=& c(x,\, y+z) \, \frac{\gamma(x+y,\,z)\gamma(x,\,y)}{ \gamma(y,\,z+x)\gamma(z,\,x)}
        \, \omega^{-1}(x,\,y,\,z) \omega^{-1}(y,\,z,\,x) \\
&=& c(x,\, y+z) \, \frac{\gamma(y+x,\,z)\gamma(y,\,x)}{\gamma(y,\, x+z)\gamma(x,\,z)} 
       \,  \frac{\gamma(x,\, y)}{\gamma(y,\, x)} \,\frac{\gamma(x,\, z)}{\gamma(z,\, x)} \,
         \omega^{-1}(x,\,y,\,z) \omega^{-1}(y,\,z,\,x) \\
&=& \beta(x,\,y)\beta(x,\,z)  \, \frac{c(x,\, y+z)}{c(x,\,y)c(x,\,z)} 
        \,\frac{\omega(y,\,x,\,z)}{ \omega(x,\,y,\,z) \omega(y,\,z,\,x) } \\
&=&  \beta(x,\,y)\beta(x,\,z).
\end{eqnarray*}
In the second and the fourth equalities we used identity \eqref{ac}  and in the last equality 
we used \eqref{ab cocycle A}. Thus, $\beta$ is multiplicative in the second argument. 
That it is multiplicative in the first argument is proved in a similar way.  Finally,
the identitty $\beta(x,\,x)= q(x)$ is obtained by setting $y=x$ in \eqref{def beta}. 
\end{proof}

\begin{corollary}
\label{beta bijection}
There is a bijection between 
\begin{equation*} 
 \left\{ \txt{equivalence classes of\\ indecomposable module\\ $\C(A,\,q)$-categories} \right\} 
 \text{ and }   \left\{ \txt{pairs 
 $(B,\,\beta)$, where  $B$ is a subgroup of $A$,  \\  
 $\beta:B\times B \to k^\times$ is bimultiplicative  and \\ $\beta(x,\,x)=q(x),\,x\in B$ } \right\} 
\end{equation*}
\end{corollary}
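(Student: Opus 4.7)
The plan is to verify that the assignment $(B,\gamma)\mapsto (B,\beta)$ defined by \eqref{def beta} descends to a well-defined bijection between the parameter set recalled at the start of Section~\ref{Alexei's parameterization} and the set of pairs $(B,\beta)$ in the statement. Proposition~\ref{betax} shows that this assignment does land in the claimed target, so only well-definedness, injectivity, and surjectivity remain.

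For well-definedness on equivalence classes, recall that two functions $\gamma_1,\gamma_2:B\times B\to k^\times$ with $d\gamma_i=\omega|_{B^3}$ yield equivalent module categories exactly when they differ by a $2$-coboundary $d\mu$ for some $\mu:B\to k^\times$. Since $d\mu(x,y)=\mu(x)\mu(y)\mu(x+y)^{-1}$ is symmetric in $x,y$, the ratio $\gamma(x,y)/\gamma(y,x)$ is invariant under such changes, hence so is $\beta$.

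Injectivity: if $\gamma_1,\gamma_2$ produce the same $\beta$, their ratio $\eta:=\gamma_1/\gamma_2$ is an honest $2$-cocycle on $B$ (since $d\gamma_1=d\gamma_2=\omega|_{B^3}$), and it is symmetric ($\eta(x,y)=\eta(y,x)$) because the antisymmetric parts of $\gamma_1$ and $\gamma_2$ agree by the definition of $\beta$. Symmetric $2$-cocycles classify abelian central extensions of $B$ by $k^\times$, so they form $\mathrm{Ext}^1_{\mathbb{Z}}(B,k^\times)$, which vanishes because $k^\times$ is divisible and hence injective as a $\mathbb{Z}$-module. Therefore $\eta$ is a coboundary and $\gamma_1\sim\gamma_2$.

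Surjectivity is the main step, and I would handle it via the Eilenberg--MacLane classification recalled in Example~\ref{quad form}. Given $(B,\beta)$ with $\beta$ bimultiplicative and $\beta(x,x)=q(x)$, the pair $(1,\beta)$ is an abelian $3$-cocycle on $B$: the bimultiplicativity of $\beta$ is exactly equations \eqref{ab cocycle A}--\eqref{ab cocycle B} with $\omega\equiv 1$. Its associated quadratic form is $\beta(x,x)=q(x)=q|_B(x)$, which is also the quadratic form associated to the restricted abelian cocycle $(\omega|_B,c|_B)$. By the Eilenberg--MacLane theorem \cite{em}, the map to quadratic forms is an isomorphism $H^3_{ab}(B,k^\times)\xrightarrow{\sim}\{\text{quadratic forms on }B\}$, so the two abelian cocycles are cohomologous. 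Translating the equivalence relation \eqref{cob}--\eqref{bra} (with trivial $f$) yields a function $\gamma:B\times B\to k^\times$ satisfying $d(\gamma)=\omega|_{B^3}$ and $\beta(x,y)=c(x,y)\gamma(x,y)/\gamma(y,x)$, which is exactly a preimage of $(B,\beta)$. The only real subtlety is keeping the sign and inversion conventions of \eqref{cob}--\eqref{bra} consistent with those of \eqref{ac} and \eqref{def beta}, which is routine once one writes the Eilenberg--MacLane equivalence explicitly; I expect this bookkeeping to be the main nuisance of the proof.
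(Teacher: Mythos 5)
Your proposal is correct and takes essentially the same route as the paper: the map $(B,\gamma)\mapsto(B,\beta)$ of \eqref{def beta}, injectivity because the ratio of two preimages is a symmetric $2$-cocycle and hence a coboundary (as $k^\times$ is divisible), and surjectivity via the Eilenberg--MacLane identification of $H^3_{ab}(B,\,k^\times)$ with quadratic forms on $B$. The only cosmetic difference is that the paper packages the surjectivity step through the commutative diagram \eqref{H3ab diagram} with its exact bottom row, while you compare the classes of $(1,\beta)$ and $(\omega|_B,\,c|_B)$ in $H^3_{ab}(B,\,k^\times)$ directly; the underlying mechanism is identical.
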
 
\begin{proof} 

Let $B$ be a subgroup of $A$ corresponding to an indecomposable $\C$-module category. 
The formula (\ref{def beta}) defines a map between sets 
\begin{equation}
\lb{map}
\left\{ \txt{ maps $\gamma: B \times B \to k^\times$\\
such that   $d(\gamma) = \omega$\\
modulo coboundaries}  \right\} 
\longrightarrow 
\left\{ \txt{$\beta\in \Hom(B^{\otimes 2},k^\times)$\\ 
such that $\beta(x,\,x)=q(x),\,x\in B$} \right\}.
\end{equation}
We need to  prove that \eqref{map} is a bijection.

Let $\gamma_1,\,\gamma_2$ be $2$-cochains on $B$  such that $d(\gamma_1) = d(\gamma_2) = \omega$ and such that 
\[
c(x,\, y) \frac{\gamma_1(x,\, y)}{\gamma_1(y,\, x)} = c(x,\, y) \frac{\gamma_2(x,\, y)}{\gamma_2(y,\, x)}, \qquad x,\,y\in B
\] 
Then $\dfrac{\gamma_2}{\gamma_1}$ is a symmetric $2$-cocycle on $B$, i.e.,  
$\gamma_1$ and $\gamma_2$ differ by a coboundary.
Thus the map \eqref{map} is injective. 

Consider a diagram
\begin{equation}
\label{H3ab diagram}
\xymatrix{ && H^3_{ab}(A,\,k^\times) \ar[rr] \ar[d]^{res_B} && H^3(A,\,k^\times) \ar[d]^{res_B}\\
\Hom(B^{\otimes 2},\,k^\times) \ar[rr] && H^3_{ab}(B,\,k^\times) \ar[rr] && H^3(B,\,k^\times) }
\end{equation}
with commutative square and the bottom row exact in the middle term.
(The Abelian cohomology groups were defined in Example~\ref{quad form}.)
Let $q$ be a quadratic form on $A$, identified with an element of $H^3_{ab}(A,\,k^\times)$.
It follows from diagram \eqref{H3ab diagram} that $q$ is in the kernel of the composition
$$H^3_{ab}(A,k^\times) \to H^3(A,k^\times)\to H^3(B,k^\times)$$ if and only if  
the restriction of $q$ to $B$ can be represented by some bimultiplicative $\beta:B^{\otimes 2}\to k^\times$. 
This proves surjectivity of (\ref{map}).
\end{proof}

\begin{remark}
Note that the condition \eqref{rep}  along with identity \eqref{cc = sigma} imply
\beq\lb{symb}
\beta(x,\,y)\beta(y,\,x) =  \sigma(x,\,y),\quad x,\,y\in B.
\eeq
\end{remark}

By $\M(B,\, \beta)$ we will denote a module category corresponding to the pair $(B,\beta)$ under the 
bijection from Corollary~\ref{beta bijection}. 

The following Lemma is a special case of the result proved in \cite{N}.

\ble\label{am}
Let  $\M = \M(B,\beta)$ be a $\C(A,\,q)$-module category. Then the group
$\Aut_\C(\M)$ of isomorphism classes of $\C$-module autoequivalences of $\M$ 
fits into a short exact sequence:
$$\xymatrix{ 1\ar[r] & \widehat B \ar[r] & \Aut_\C(\M) \ar[r] & A/B \ar[r] & 1}$$
\ele
\bpf
The homomorphism $\Aut_\C(\M) \to A/B$ assigns the effect of a $\C$-equivalence on the set $A/B$ of simple objects of $\M$. It is clear that this homomorphism is surjective (it is enough to look at the images of $\alpha$-inductions).

The kernel of the homomorphism $\Aut_\C(\M) \to A/B$ consists of isomorphism classes of $\C$-equivalences isomorphic to the identity functor. With a choice of simple object $m\in\M$ a $\C$-module structure on the identity functor on $\M$ gives rise to a character $\psi\in \widehat B$
$$\psi(b)id_m:m = b*m\to b*m = m.$$
It follows from Shapiro's lemma that the character determines the $\C$-module structure. 
\epf

\begin{proposition}
\label{B,beta}
The $\C(A,\,q)$-module category $\M(B,\,\beta)$ is invertible if and only if the form $\beta: B\times B \to k^\times$ 
is non-degenerate.  
\end{proposition}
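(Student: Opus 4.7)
The plan is to reduce invertibility of $\M(B,\beta)$ to the condition that the $\alpha$-induction tensor functor
\[
\alpha^+_\M : \C \to \C^*_\M
\]
is an equivalence, and then extract the required non-degeneracy from an explicit character computation in the pointed setting. The reduction itself is contained in Section~\ref{azumaya section}: an invertible $\C$-module category corresponds to an Azumaya algebra in $\C$, which by the cited result is equivalent to $\alpha^+_\M$ being a tensor equivalence.

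First I would pin down the target $\C^*_\M$ as a pointed fusion category of rank $|A|$. Lemma~\ref{am} yields an exact sequence whose outer groups have orders $|B|$ and $|A|/|B|$, so $|\Aut_\C(\M)|=|A|$. Since $\C^*_\M$ is Morita equivalent to $\C=\C(A,q)$ its Frobenius--Perron dimension is $|A|$, and the $|A|$ invertible module endofunctors already saturate this, so $\C^*_\M$ is pointed of rank $|A|$. A tensor functor between pointed fusion categories of the same rank is an equivalence iff the underlying group homomorphism on isomorphism classes of simples is bijective, which by a cardinality argument is the same as injectivity of
\[
\alpha^+_\M|_A \colon A \to \Aut_\C(\M).
\]

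The decisive step is to compute the kernel of this homomorphism. The action of $x\in A$ on the simple objects of $\M$ (a torsor for $A/B$) is translation by $x\bmod B$, so $\alpha^+_\M(x)$ projects trivially to $A/B$ in the sequence of Lemma~\ref{am} only when $x\in B$; for such $x$ its class in $\hat B$ is a character. Using the definition of the module structure of $\alpha^+_\M(x)$ from \eqref{def alpha ind}, the cocycle relation \eqref{ac} between $\gamma$ and $\omega|_B$, and the fact that in a pointed braided category $c_{x,y}$ acts as the scalar $c(x,y)$, the scalar induced at a base simple of $\M$ by $y\in B$ should come out to
\[
c(x,y)\,\frac{\gamma(x,y)}{\gamma(y,x)}=\beta(x,y).
\]
Consequently $\ker(\alpha^+_\M|_A)$ coincides with the left radical of $\beta$ on $B$, and invertibility of $\M$ is equivalent to this radical being trivial. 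By bimultiplicativity of $\beta$ and the identity \eqref{symb}, triviality of one radical is equivalent to non-degeneracy of $\beta$.

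The main technical obstacle is the character calculation in the previous step: the factor $\gamma(x,y)/\gamma(y,x)$ arises from identifying $x*(y*m_0)$ with $m_0$ via two different compositions of the structural isomorphisms $\phi$ that differ by $\gamma(x,y)$, and one has to track the associativity constraints \eqref{middle associativity} for $\alpha^+_\M(x)$ consistently with the coherence \eqref{ac}. Everything else in the argument is a cardinality/FPdim bookkeeping.
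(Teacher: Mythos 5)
Your proposal is correct and takes essentially the same route as the paper: the paper likewise reduces invertibility to the $\alpha$-inductions inducing an isomorphism $A \to \Aut_\C(\M)$ via the exact sequence of Lemma~\ref{am}, and identifies the character attached to $b\in B$ as $\beta(b,\,-)$ by exactly your scalar computation $c(x,y)\,\gamma(x,y)/\gamma(y,x)=\beta(x,y)$. The only cosmetic differences are that the paper phrases this as a morphism of short exact sequences and computes both $\alpha^+_\M$ and $\alpha^-_\M$ (the Azumaya criterion it invokes involves both inductions, though their characters $\beta(b,\,-)$ and $\beta(-,\,b)^{-1}$ are simultaneously non-degenerate, so your use of $\alpha^+$ alone costs nothing), while you make the FPdim/pointedness bookkeeping for $\C^*_\M$ explicit where the paper leaves it implicit.
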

\begin{proof}
Note that $\M=\M(B,\,\beta)$ is invertible if and only if the $\alpha$-inductions 
\[
\alpha_\M^\pm:\C\to\End_\C(\M)
\]
from Section~\ref{alpha}
induce isomorphisms of groups $A\to \Aut_\C(\M)$ on the level of isomorphism classes of objects. 
We can see that $\alpha$-inductions give morphisms of short exact sequences:
$$
\xymatrix{
0 \ar[r] &B \ar[r] \ar[d] & A \ar[r] \ar[d]^{\alpha_\M^\pm} & A/B \ar@{=}[d]  \ar[r] & 0\\ 
0 \ar[r] & \widehat B \ar[r] &
Aut_\C(\M) \ar[r] & A/B  \ar[r] & 0
},
$$
The homomorphisms $B\to \widehat B$ can be recovered from the $\C$-module functor structures of  $\alpha_\M^\pm(a)$ for $a\in
A$. The $\C$-module functor structure for $\alpha_\M^+(a)$ is given by the diagram:
$$
\xymatrix{ a(bm) \ar[rr]^{\alpha_\M^+(a)_{b,m}} \ar[d]_{\gamma(a,b)} && b(am) \ar[d]^{\gamma(b,a)} \\ (ab)m\ar[rr]^{c(a,b)} && (ba)m }
$$
so that $\alpha_\M^+(a)_{b,m} = \beta(a,b)$ for $a,b\in B$. Here $m$ is a simple object of $\M$. 
Thus, the corresponding homomorphism $B\to \widehat B$ has a form
$b\to \beta(b,\,- )$. 

Similarly, the $\C$-structure for $\alpha_\M^-(a)$ is defined by:
$$\xymatrix{ a(bm) \ar[rr]^{\alpha_\M^-(a)_{b,m}} \ar[d]_{\gamma(a,b)} && b(am) \ar[d]^{\gamma(b,a)} \\ (ab)m \ar[rr]^{c(b,a)^{-1}} && (ba)m }.$$
Hence,  $\alpha_\M^-(a)_{b,m} = \beta(b,a)^{-1}$ for $a,b\in B$ and the corresponding homomorphism $B\to \widehat B$ has a form
$b\to \beta(- ,\,b)^{-1}$.
\end{proof}

From the proof of Proposition~\ref{B,beta} we have the following.
\bco
\label{Alexei's list}
The homomorphism $\partial:Pic(\C(A,q))\to Aut^{br}(A,q)$ sends the class of $\M(B,\beta)$ 
into the unique automorphism $g\in O(A,\,q)$ such that 
\begin{enumerate}
\item[] $g(B)\subset B$,
\item[] $g$ induces the identity on $A/B$, and
\item[] $\beta(b,g(c)) = \beta(c,b)^{-1},\quad \forall b,c\in B$.
\end{enumerate}
\eco

\bre
It follows from \eqref{symb} that the last condition in Corollary~\ref{Alexei's list} can be written as
$\beta(b,g(c)-c) = \sigma(b,c)^{-1},\quad \forall b,c\in B.$ This gives an alternative description of $g$ (cf. \cite{dkr}, the graph of $-g$ is the Lagrangian subgroup $\Gamma(B,\beta)\subset (A\oplus A, q\oplus q^{-1})$ there): 
\begin{enumerate}
\item[] $g(a)-a\in B$  for any $a\in A$ and
\item[] $\beta(b,g(a)-a) = \sigma(b,a)^{-1},\quad \forall b\in B.$
\end{enumerate}

In accordance with the crossed module axiom \eqref{fpcm} 
the map 
\[
\partial: \Pic(\C(A,\,q))\to O(A,\,q)
\]
is $O(A,\,q)$-equivariant: 
$\partial(h(B,\,\beta)) = h\circ\partial(B,\,\beta)\circ h^{-1}$ for $h\in O(A,\,q)$. 
Here $h(B,\,\beta) = (h(B),\,h(\beta))$ with $h(\beta)(a,\,b) = \beta(h^{-1}(a),\,h^{-1}(b))$ and $^{h}{\M(B,\,\beta)}\simeq\M(h(B,\,\beta))$. 

This gives a description of the map $\partial$ for the Picard crossed module $ \mathfrak{P}(\C(A,\,q))$. 
The part which is unclear in this presentation is the group structure of $\Pic(\C(A,\,q))$. 
It appears that the group operation is more accessible on the level of the autoequivalence crossed module 
$\mathfrak{A}(\C(A,\,q))$ (recall that  $\mathfrak{A}(\C(A,\,q)) \simeq  \mathfrak{P}(\C(A,\,q))$
by Corollary~\ref{P iso A}). In the remaining sections we compute this crossed module.
\ere

\subsection{The center of a pointed braided fusion category}

Let $\C =\C(A,\, q)$ be a pointed braided fusion category. The following fact is no doubt
known to experts but we were unable to locate a reference in the literature.

\begin{proposition}
The center $\Z(\C)$ is pointed and $\Z(\C)\cong \C(A \oplus \widehat{A}, Q)$, where
\begin{equation}
\label{Q}
Q(a,\, \phi) = \la \phi,\, a \ra \,  q(a).
\end{equation}
\end{proposition}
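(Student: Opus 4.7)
The plan is to exhibit $\Z(\C(A,q))$ as a pointed braided fusion category whose pre-metric group is $(A\oplus\widehat{A},Q)$, and then appeal to the Joyal--Street classification of pointed braided fusion categories recalled in Example~\ref{quad form}.

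First I would construct, for every pair $(a,\phi)\in A\oplus\widehat{A}$, an object $\pi(a,\phi):=(a,\gamma^{a,\phi})$ of $\Z(\C)$ by twisting the canonical half-braiding $c_{-,a}$ coming from the embedding $\C\hookrightarrow\Z(\C)$ of \eqref{embeddings of C to Z(C)}. Concretely, on a simple $x\in A$ I set
\[
\gamma^{a,\phi}_{x}:=\langle\phi,\,x\rangle\,c_{x,a},
\]
extending by additivity and naturality. The hexagon axiom for the braiding of $\C$ gives $c_{x\ot y,\,a}=(c_{x,a}\ot\id_{y})\circ(\id_{x}\ot c_{y,a})$ up to associativity, so the compatibility required of a half-braiding collapses to $\langle\phi,\,x+y\rangle=\langle\phi,\,x\rangle\langle\phi,\,y\rangle$, which is precisely the character property. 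This is the only genuine calculation in the argument.

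Next I would do a dimension count. The objects $\pi(a,\phi)$ are pairwise non-isomorphic invertible simples of $\Z(\C)$, so they account for $|A|\cdot|\widehat{A}|=|A|^{2}$ distinct isomorphism classes. On the other hand $\FPdim(\Z(\C))=\FPdim(\C)^{2}=|A|^{2}$, so these exhaust all simples of $\Z(\C)$ and in particular $\Z(\C)$ is pointed. A direct computation of the tensor product $\pi(a,\phi)\ot\pi(b,\psi)$---stacking the two half-braidings and again invoking the hexagon---produces $\pi(a+b,\,\phi+\psi)$, identifying the group of simples of $\Z(\C)$ with $A\oplus\widehat{A}$.

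Finally, using the formula $c^{\Z(\C)}_{(X,\gamma),(Y,\delta)}=\delta_{X}$ for the braiding of the center, I would compute
\[
c^{\Z(\C)}_{\pi(a,\phi),\,\pi(b,\psi)}=\langle\psi,\,a\rangle\,c_{a,b},
\]
so the associated quadratic form is $q_{\Z(\C)}(a,\phi)=\langle\phi,\,a\rangle\,c_{a,a}=\langle\phi,\,a\rangle\,q(a)=Q(a,\phi)$. By the equivalence between pre-metric groups and pointed braided fusion categories, this forces $\Z(\C)\simeq\C(A\oplus\widehat{A},\,Q)$. The only step requiring thought is the character-twist producing a half-braiding; everything else is bookkeeping with the hexagon and an FP-dimension count.
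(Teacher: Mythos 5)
Your proposal is correct and follows essentially the same route as the paper: you construct the same invertible central objects by twisting the half-braiding $c_{-,a}$ by a character $\phi$ (the verification via the hexagon/abelian-cocycle identities \eqref{ab cocycle A}--\eqref{ab cocycle B} is the same check), count $|A|^2$ invertible simples against $\dim\Z(\C)=|A|^2$ to get pointedness, and read off the quadratic form $Q(a,\phi)=\la\phi,a\ra q(a)$ from the self-braiding, concluding by the Joyal--Street classification. No substantive differences.
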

\begin{proof}
For any $a\in A$ and $\phi\in \widehat{A}$  there is an invertible object $Z_{a,\phi}$ in $\Z(\C)$
which  is equal to $a$ as an object of $\C$ and has a half braiding given by
\begin{equation}
\label{half br}
c(x,\, a) \, \la \phi,\, x\ra \, \id_{a+x} : 
x \ot  Z_{a,\phi} \xrightarrow{  \sim }  Z_{a,\phi} \ot x,
\end{equation} 
where $c: A^{\times 2} \to k^\times$ is the function \eqref{function c} determining the braiding of $\C$.
That the morphism \eqref{half br} is indeed a central structure on $a$ (i.e., that is satisfies  
necessary coherence conditions) follows from identities \eqref{ab cocycle A} and
\eqref{ab cocycle B}.

Thus, $\Z(\C)$ contains $|A|^2$ non-isomorphic invertible simple objects.
Since the dimension of $\Z(\C)$ is  $\dim(\C)^2 =|A|^2$, the category $\Z(\C)$ is pointed.
Furthermore,  $Z_{a,\phi}  \ot Z_{a',\phi'} = Z_{aa',\phi\phi'},\, a,\,a'\in A,\, \phi,\,\phi'\in \widehat{A}$, 
i.e., the group of invertible objects of $\Z(\C)$ is    $A \oplus \widehat{A}$.  
Finally, from \eqref{half br}  we see that the  braiding on $Z_{a,\phi}  \ot Z_{a,\phi}$ is given by 
the scalar $\la \phi,\, a \ra \, q(a)$.
\end{proof}

\begin{remark}
\label{B}
Let 
\begin{equation}
\label{sigma from q}
\sigma(a,\,b) := \frac{q(a+b)}{q(a)q(b)},\qquad x,y\in A,
\end{equation}
be the bimultiplicative form corresponding to the quadratic
form $q: A \to k^\times$.  Then the  bimultiplicative form corresponding to the form $Q$ defined in \eqref{Q} is
\begin{eqnarray*}
B((a,\, \phi),\, (a',\, \phi')) &=& \frac{Q(a+a',\, \phi+\phi') }{Q(a,\, \phi)  Q(a',\, \phi')} \\
&=&  \la \phi',\,a \ra  \,\la \phi,\,a' \ra \, \sigma(a,\, a'),\qquad a,a'\in A,\, \phi,\phi'\in \widehat{A}. 
\end{eqnarray*}
\end{remark}

\begin{remark}
Note that in general the category $\Z(\Vec_A^\omega)$, where $A$ is an Abelian group
and $\omega \in Z^3(A,\, k^\times)$ is not pointed, see \cite{GMN}.
\end{remark}

Let $\sigma :A\times A \to k^\times$ be the symmetric bimultiplicative form \eqref{sigma from q}.
For any $a \in A$ define  a homomorphism  $\tilde{\sigma}: A \to \widehat{A}$ by
\[ 
\la \tilde{\sigma}(a) ,\, x \ra = \sigma (a,\, x),\qquad \mbox{for all } x\in A.
\]
The embeddings $\C(A,\,q),\, \C(A,\,q)^\rev \hookrightarrow \Z(\C(A,\,q))$  defined in \eqref{embeddings of C to Z(C)} 
are given by injective orthogonal homomorphisms
\begin{eqnarray*}
(A,\, q) \to (A \oplus \widehat{A},\, Q) &:& a \mapsto (a,\, 0), \\
(A,\, q^{-1}) \to (A \oplus \widehat{A},\, Q) &:& a \mapsto (a,\,-\tilde{\sigma}(a)). 
\end{eqnarray*}

\subsection{The Picard group of  $\C(A,\, q)$}
 
By Theorem~\ref{Drinfeld's Theorem} any invertible $\C(A,\, q)$-module category
corresponds to  an orthogonal automorphism $\alpha \in O(A\oplus \widehat{A},\, Q)$
such that $\alpha(a,\,0) = (a,\,0)$ for all $a\in A$.

\begin{proposition}
\label{alpha from f}
Let  $f: \widehat{A} \to A$  be a group homomorphism satisfying the following conditions:
\begin{enumerate}
\item[(i)]  $\id_{\widehat{A}}-\tilde{\sigma} f$ is invertible,
\item[(ii)] $\la \phi,\, f(\phi)\ra = q( f(\phi))$ for all $\phi \in \widehat{A}$.
\end{enumerate}
Then the map
\begin{equation}
\label{our alpha}
\alpha_f(a,\, \phi) = (a + f(\phi),\, \phi - \tilde{\sigma} f(\phi) ),\qquad a\in A,\, \phi\in \widehat{A}.
\end{equation}
is an orthogonal automorphism  of $(A\oplus \widehat{A},\, Q)$ that restricts to
the identity on $A$.

Conversely, any orthogonal automorphism  with this property is of the form \eqref{our alpha}
for a unique homomorphism $f: \widehat{A} \to A$ satisfying  conditions (i) and (ii).
\end{proposition}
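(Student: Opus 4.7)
The plan is to observe that any group endomorphism $\alpha$ of $A \oplus \widehat{A}$ restricting to $\id_A$ on the first summand must, by additivity, have the form $\alpha(a,\phi) = (a + f(\phi),\, g(\phi))$ for uniquely determined group homomorphisms $f \colon \widehat{A}\to A$ and $g \colon \widehat{A}\to\widehat{A}$ (where $f(\phi)$ and $g(\phi)$ are the two components of $\alpha(0,\phi)$). I then translate the orthogonality and invertibility requirements into the conditions (i) and (ii) on $f$.

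For the orthogonality $Q(\alpha(a,\phi)) = Q(a,\phi)$, I would expand the left side using $q(a + f(\phi)) = q(a)\, q(f(\phi))\, \sigma(a, f(\phi))$ and the definition of $Q$, arriving at
\[
\langle g(\phi),\, a\rangle\, \sigma(a, f(\phi))\, \langle g(\phi),\, f(\phi)\rangle\, q(f(\phi)) \;=\; \langle \phi,\, a\rangle.
\]
Rewriting $\sigma(a, f(\phi)) = \langle \tilde{\sigma}(f(\phi)),\, a\rangle$ separates the $a$-dependent part. Equality of characters in $a$ forces
\[
g(\phi) \;=\; \phi - \tilde{\sigma}(f(\phi)),
\]
and the remaining $a$-independent scalar condition reads $\langle \phi - \tilde{\sigma}(f(\phi)),\, f(\phi)\rangle\, q(f(\phi)) = 1$. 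Using the standard identity $\sigma(x,x) = q(x)^2$ (immediate from $q(2x) = q(x)^4$), this collapses to $\langle \phi,\, f(\phi)\rangle = q(f(\phi))$, which is condition (ii).

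For invertibility, the map $\alpha_f$ is essentially block triangular: the $A$-coordinate of $\alpha_f(a,\phi)$ is $a$ plus a function of $\phi$, while the $\widehat{A}$-coordinate is $(\id_{\widehat{A}} - \tilde{\sigma} f)(\phi)$. Hence $\ker \alpha_f$ is parametrized by $\ker(\id_{\widehat{A}} - \tilde{\sigma} f)$, and finiteness of $\widehat{A}$ makes bijectivity of $\alpha_f$ equivalent to invertibility of $\id_{\widehat{A}} - \tilde{\sigma} f$, which is condition (i). Conversely, given $f$ satisfying (i) and (ii), formula \eqref{our alpha} clearly defines a group homomorphism, and the computation above run in reverse shows $Q \circ \alpha_f = Q$, so $\alpha_f \in O(A \oplus \widehat{A}, Q)$; uniqueness of $f$ is built in since $f(\phi)$ is recovered as the $A$-component of $\alpha(0,\phi)$.

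The one nontrivial moment is recognizing that the $a$-independent half of the orthogonality identity reduces, via $\sigma(x,x) = q(x)^2$, to the clean pairing condition (ii); once that collapse is in place, everything else is direct additive bookkeeping.
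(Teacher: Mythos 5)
Your proposal is correct and follows essentially the same route as the paper: write $\alpha(a,\phi)=(a+f(\phi),g(\phi))$, expand $Q\circ\alpha=Q$ to force $g=\id_{\widehat{A}}-\tilde{\sigma}f$ and the scalar condition, and collapse the latter to (ii) via $\sigma(x,x)=q(x)^2$, with (i) equivalent to invertibility. The only cosmetic differences are that you separate the $a$-dependent and $a$-independent parts of one identity where the paper first evaluates at $Q(0,\phi)=1$, and you spell out the kernel argument for the invertibility equivalence, which the paper leaves implicit.
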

\begin{proof}
Suppose a group homomorphism $f: \widehat{A} \to A$ is given.
Clearly, $\alpha_f$ is a homomorphism and its restriction to $A$ is the identity. 
Condition (i) in the statement of the Proposition
is equivalent to $\alpha_f$ being invertible. Let us explore the property of
$\alpha_f$ being orthogonal. We compute
\begin{eqnarray*}
Q(\alpha_f(a,\, \phi))
&=& Q(a + f(\phi) ,\, \phi - \tilde{\sigma} f(\phi)) \\
&=& Q(a,\, \phi) \, Q(f(\phi),\,-\tilde{\sigma} f(\phi)) \,B((a,\, \phi),\, (f(\phi),\,-\tilde{\sigma} f(\phi))) \\
&=& Q(a,\, \phi) \, \sigma( f(\phi) ,\, f(\phi))^{-1} \,   q(f(\phi))  \, \la \phi,\,f(\phi)\ra \, \\
&=& Q(a,\, \phi) \,   q(f(\phi))^{-1} \, \la \phi,\,f(\phi) \ra ,
\end{eqnarray*}
whence $\alpha_f$ is orthogonal if and only if condition (ii) is satisfied.

Let us prove the converse statement.
Let $\alpha \in O(A\oplus \widehat{A},\, Q)$ be such that $\alpha$ restricts to the identity on $A$. 
Let $f: \widehat{A} \to A$  and $g: \widehat{A} \to \widehat{A}$  be homomorphisms
such that $\alpha(0,\, \phi) = (f(\phi),\, g(\phi))$ for all $\phi\in \widehat{A}$. 
Since $\alpha$ preserves the quadratic form $Q$ 
the condition $Q(0,\, \phi)=1$ implies $Q(f(\phi),\,g(\phi))=1$ which is equivalent to
\begin{equation}
\label{la fg ra} 
\la g(\phi),\, f(\phi)\ra \, q (f(\phi)) =1.
\end{equation}

Next, for arbitrary $a\in A$ and $\phi\in \widehat{A}$ we have 
\begin{equation}
\label{alpha a phi}
\alpha(a,\, \phi) = (a+f(\phi),\,g(\phi)).
\end{equation}
We have $Q(\alpha(a,\, \phi)) = Q(a,\, \phi) = \la \phi,\, a\ra \, q(a)$.
On the other hand, we compute
\begin{eqnarray*}
Q(\alpha(a,\, \phi))
&=& Q(a+f(\phi),\,g(\phi))  \\
&=& Q(a,\,1) \, Q(f(\phi),\, g(\phi))\,  \la g(\phi),\, a \ra \, \sigma(a, \, f(\phi)) \\ 
&=& q(a)\, \la g(\phi),\, a \ra \, \sigma ( f(\phi),\, a) \\
&=& q(a) \,\la g(\phi) + \tilde{\sigma} f(\phi),\, a \ra.
\end{eqnarray*}
Comparing two expressions we obtain  
\begin{equation}
\label{g phi}
g(\phi)=\phi - \tilde{\sigma} f(\phi), \qquad \mbox{for all }\phi\in \widehat{A}.
\end{equation}
This along with \eqref{alpha a phi} yields \eqref{our alpha}.

Substituting \eqref{g phi} to  \eqref{la fg ra} we obtain
\[
\la \phi,\, f(\phi)\ra  \,q (f(\phi)) = \la   \tilde{\sigma}f(\phi) , \,f(\phi)\ra  = \sigma ({f(\phi)}, \,f(\phi)) = q (f(\phi))^2,
\]
whence $\la \phi,\, f(\phi)\ra   = q (f(\phi))$ as required.
\end{proof}

Let $P(A,\, q)$ be the set of group homomorphisms $f: \widehat{A}\to A$  satisfying conditions
(i) and (ii) of Proposition~\ref{alpha from f}, i.e.,
\begin{equation}
\label{introducing P(a,q)}
P(A,\,q) := \left\{
\txt{homomorphisms $f: \widehat{A}\to A$ such that\\  $\id_{\widehat{A}}-\tilde{\sigma}\circ f$ is invertible
and\\ $\la \phi,\, f(\phi)\ra = q( f(\phi))$ for all $\phi \in \widehat{A}$
}
\right\}.
\end{equation}

Endow the set $P(A,\, q)$ with the following binary operation
\begin{equation}
\label{diamond}
f \diamond g  = f + g - f\circ \tilde{\sigma}\circ g,\qquad f,\,g\in P(A,\, q).
\end{equation}

\begin{proposition}
\label{P(A,q) is a group}
The set $P(A,\,q)$ with the  operation $\diamond$ defined in \eqref{diamond} is a group.
Furthermore, the map
\begin{equation}
\label{f mapsto alpha}
f \mapsto \alpha_f : P(A,\,q)\to \Aut^{br}\big(\Z(\C(A,\,q)),\C(A,\,q)\big),
\end{equation}
where  $\alpha_f\in \Aut^{br}(\Z(\C(A,\,q)))$
is defined in \eqref{our alpha}, is a group isomorphism.
\end{proposition}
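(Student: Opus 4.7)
The plan is to transport the group structure from $\Aut^{br}(\Z(\C(A,q)); \C(A,q))$, which is already a group under composition of autoequivalences, back to $P(A,q)$ via the bijection $f \mapsto \alpha_f$ supplied by Proposition~\ref{alpha from f}. The central point to establish is that this bijection intertwines the operation $\diamond$ with composition of automorphisms, i.e.,
\[
\alpha_f \circ \alpha_g = \alpha_{f \diamond g}, \qquad f,g \in P(A,q).
\]

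To verify the intertwining identity I would simply unfold the definition \eqref{our alpha}. Applying $\alpha_f$ to $\alpha_g(a,\phi) = (a + g(\phi),\, \phi - \tilde{\sigma} g(\phi))$ gives, using linearity of $f$ and $\tilde{\sigma}$,
\[
\bigl(a + g(\phi) + f(\phi) - f\tilde{\sigma} g(\phi),\; \phi - \tilde{\sigma} g(\phi) - \tilde{\sigma} f(\phi) + \tilde{\sigma} f \tilde{\sigma} g(\phi)\bigr),
\]
which is exactly $\alpha_{f + g - f\tilde{\sigma} g}(a,\phi) = \alpha_{f \diamond g}(a,\phi)$, as desired.

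Once this identity is in hand, everything else is formal. Closure of $\diamond$ on $P(A,q)$: the composition $\alpha_f \circ \alpha_g$ lies in $\Aut^{br}(\Z(\C);\C)$, so by the uniqueness part of Proposition~\ref{alpha from f} it equals $\alpha_h$ for a unique $h \in P(A,q)$, and the intertwining identity forces $h = f \diamond g$. Associativity of $\diamond$ is inherited from associativity of composition. The zero homomorphism $0 \colon \widehat{A} \to A$ lies in $P(A,q)$ (conditions (i), (ii) of Proposition~\ref{alpha from f} are trivially satisfied) and corresponds to $\alpha_0 = \id$, hence is a two-sided identity for $\diamond$. For inverses, the automorphism $\alpha_f$ is invertible (condition (i)), and its inverse restricts to the identity on $\C(A,q) \hookrightarrow \Z(\C(A,q))$; so by Proposition~\ref{alpha from f} there is a unique $f' \in P(A,q)$ with $\alpha_{f'} = \alpha_f^{-1}$, and then $\alpha_{f \diamond f'} = \alpha_f \circ \alpha_{f'} = \id = \alpha_0$ forces $f \diamond f' = 0 = f' \diamond f$ by uniqueness again.

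Thus $(P(A,q), \diamond)$ is a group and the bijection $f \mapsto \alpha_f$ is a group isomorphism onto $\Aut^{br}(\Z(\C(A,q)); \C(A,q))$. The only real step is the direct composition calculation; once carried out, the group axioms transfer automatically from the group structure on $\Aut^{br}(\Z(\C);\C)$, so no genuine obstacle arises beyond careful bookkeeping of the signs and of the appearances of $\tilde{\sigma}$.
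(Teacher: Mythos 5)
Your proposal is correct and follows essentially the same route as the paper: use the bijection from Proposition~\ref{alpha from f} and the identity $\alpha_f\circ\alpha_g=\alpha_{f\diamond g}$ (which you verify by the same direct substitution the paper leaves implicit) to transport the group structure from $\Aut^{br}\big(\Z(\C(A,q));\C(A,q)\big)$ to $P(A,q)$. The extra bookkeeping on identity and inverses is fine but not a different argument.
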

\begin{proof}
By Proposition~\ref{alpha from f}  the assignment  \eqref{f mapsto alpha}
is a bijection. Since
\[
\alpha_f\circ \alpha_g
=\alpha_{f\diamond g},\qquad \mbox{for all } f,\,g \in P(A,\,q),
\]
we see that $P(A,\,q)$ is a group and the assignment 
\eqref{f mapsto alpha}  is a group isomorphism. 
\end{proof}

\begin{remark}
\label{inverse formula}
Clearly, the identity element of $P(A,\,q)$ is the zero homomorphism. 
Let us describe the inverse of $f\in P(A,\, q)$. 

It is immediate from  \eqref{diamond} that the inverse of $f$ 
is given by the formula 
\begin{equation}
\label{f inverse}
f^{-1}= (f \circ \tilde\sigma-\id_A)^{-1} \circ f.
\end{equation}
Let $f^* : \widehat{A}\to A$ denote the homomorphism
dual to $f$.  We claim that $f^*\in P(A,\,q)$ and that $f^*$ is the inverse
of $f$ with respect to the multiplication 
$\diamond$.  Indeed, equality of quadratic forms in condition (ii) of Proposition~\ref{alpha from f} 
implies equality of the corresponding bilinear forms:
\[
\la f+f^*(\phi),\, \psi \ra = \sigma(f(\phi),\, f(\psi)),\qquad \phi,\,\psi\in \widehat{A}, 
\]
whence $f+f^* =f^*\circ \tilde\sigma \circ f$,  i.e.,  $f^*$  coincides with
the right hand side of \eqref{f inverse}.
\end{remark}

\begin{corollary}
\label{P iso P}
There is a group isomorphism $P(A,\,q)\cong \Pic(\C(A,\,q))$.
\end{corollary}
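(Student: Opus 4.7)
The plan is to obtain the isomorphism by composing the two isomorphisms already established in the paper. By Theorem~\ref{Drinfeld's Theorem} applied to $\C=\C(A,\,q)$, the restriction of $\Phi$ gives a group isomorphism
\[
\Phi|_{\Pic(\C(A,\,q))} : \Pic(\C(A,\,q)) \xrightarrow{\simeq} \Aut^{br}\bigl(\Z(\C(A,\,q));\,\C(A,\,q)\bigr).
\]
On the other hand, Proposition~\ref{P(A,q) is a group} provides a group isomorphism
\[
P(A,\,q) \xrightarrow{\simeq} \Aut^{br}\bigl(\Z(\C(A,\,q));\,\C(A,\,q)\bigr),\qquad f \mapsto \alpha_f,
\]
where $\alpha_f$ is the orthogonal automorphism of $(A\oplus\widehat A,\,Q)$ defined in \eqref{our alpha}. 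Composing these two isomorphisms yields the desired group isomorphism $P(A,\,q)\cong \Pic(\C(A,\,q))$.

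There is essentially no obstacle, since the substantive work has already been done: the identification of $\Z(\C(A,\,q))$ with the pointed category $\C(A\oplus\widehat A,\,Q)$, the classification (Proposition~\ref{alpha from f}) of orthogonal automorphisms of $(A\oplus\widehat A,\,Q)$ that restrict to the identity on the embedded copy of $(A,\,q)$, and the verification that composition of such automorphisms corresponds to the operation $\diamond$ on $P(A,\,q)$. So the corollary is a direct consequence and requires only the statement that the two isomorphisms can be composed.
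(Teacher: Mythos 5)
Your proof is correct and is essentially the paper's own argument: the corollary is obtained by composing the isomorphism $\Phi|_{\Pic(\C(A,\,q))}$ from Theorem~\ref{Drinfeld's Theorem} with the isomorphism $f\mapsto\alpha_f$ of Proposition~\ref{P(A,q) is a group}. No gaps.
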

\begin{proof}
This follows from Proposition~\ref{P(A,q) is a group} and Theorem~\ref{Drinfeld's Theorem}.
\end{proof}

\begin{remark}
\label{comparing 2 parameterizations}
We have two parameterization for the group $\Pic(\C(A,\,q))$.  The first one is given in terms
of pairs $(B,\, \beta)$, where $B \subset A$ is a subgroup and $\beta: B \times B \to k^\times$
is a non-degenerate bimultiplicative map such that $\beta(x,\,x) =q(x)$ for all $x\in B$, see Corollary~\ref{beta bijection}
and Proposition~\ref{B,beta}. The second one  is given in terms of the set $P(A,\,q)$
consisting of homomorphisms $f: \widehat{A}\to A$ satisfying conditions listed in
\eqref{introducing P(a,q)}.

Let us establish a bijection between  these parameterizations. 
Let $\M =\M(B,\, \beta)$ denote the invertible $\C(A,\,q)$-module
category corresponding to a pair $(B,\, \beta)$ as above.  Let $\Phi(\M)$ denote the corresponding
braided autoequivalence of $\Z(\C(A,\,q)$ defined as in \eqref{PhiM}.  By Proposition~\ref{alpha from f}
$\Phi(\M)=\alpha_f$ for a unique $f \in P(A,\,q)$. Let $\phi\in \widehat{A}$ and let $b =f(\phi)$.
Then $b$ is uniquely determined by the condition
\[
\Phi(\M)(Z_{0,\phi}) = Z_{b,\psi} \quad \text{ for some $\psi\in \widehat{A}$}.
\]
Equivalently,
\[
a_\M (Z_{0,\phi}) = b_\M(Z_{b,\psi}),
\]
where $a_\M$ and $b_\M$ are functors defined in \eqref{ZC = ZCM} nd \eqref{bM}.
Note that $b\in B$ since  the functor $a_\M (Z_{0,\phi})$ is identical on the classes of isomorphic
objects of $\M$. 

Take $x\in B$ and compare isomorphisms
\begin{eqnarray}
\label{aMZ0}
x \ot a_\M(Z_{0,\phi})(?) & \xrightarrow{\simeq}&  a_\M(Z_{0,\phi})(x\ot ?) \quad \text{and}\\
\label{bMZ0}
x \ot b_\M(Z_{b,\psi})(?) & \xrightarrow{\simeq}&  b_\M(Z_{b,\psi})(x\ot ?)
\end{eqnarray}
coming from  left $\C(A,\,q)$-module functor structures of $a_\M(Z_{0,\phi})$ and
$b_\M(Z_{0,\phi})$. 

Using equations \eqref{aM computation} and  \eqref{half br} we see that the isomorphism
\eqref{aMZ0} is given by
\begin{equation}
\label{aMZ0 explicit}
x \ot (Z_{0,\phi} \ot ?) \xrightarrow{ \langle \phi,\,x\rangle} Z_{0,\phi} \ot (x\ot ?).
\end{equation}

On the other hand, using equation \eqref{bM computation} we see that the isomorphism \eqref{bMZ0}
is given by
\begin{equation}
\label{bMZ0 explicit}
x \ot (Z_{b,\psi} \ot ?) \xrightarrow{\gamma(x,\,b)}
(x \ot Z_{b,\psi}) \ot ? \xrightarrow{c(x,\,b)}
(Z_{b,\psi}\ot x) \ot ? \xrightarrow{\gamma(b,\,x)^{-1}}
Z_{b,\psi}\ot (x \ot ?),
\end{equation}
where $\gamma: B\times B \to k^\times$ is the function that determines the module associativity
of $\M(B,\, \beta)$, see \eqref{ac}, and $c: A \times A \to k^\times$ is the braiding of $\C(A,\,q)$.
From \eqref{def beta} we see that the product of scalars in the composition \eqref{bMZ0 explicit}
is equal to $\beta(x,\,b)$. Since $\beta$ is non-degenerate it follows that $b= f(\phi)$
is completely  determined by the condition
\[
\langle \phi,\, x \rangle =\beta(x,\, b).
\]
Thus, the homomorphism $f: \widehat{A}\to A$ corresponding to $(B,\, \beta)$ is given by the composition
\begin{equation}
\label{ f from B, beta}
f: \widehat{A} \to \widehat{B} \xrightarrow{\hat{\beta}} B \hookrightarrow A,
\end{equation}
where  $\widehat{A} \to \widehat{B}$ is the surjection dual to the embedding $B \hookrightarrow A$  and
$\hat{\beta}: \widehat{B} \xrightarrow{\sim} B$ is the isomorphism induced by $\beta$.
\end{remark}

\begin{example}
\begin{enumerate}
\item[(i)]
Suppose  $q$ is non-degenerate (i.e., the category $\C(A,\, q)$ is non-degenerate).  Then 
$\tilde{\sigma}: A \to \widehat{A}$ is an isomorphism and the map  
\[
P(A,\, q) \to O(A,\,q) :
f \mapsto \id_A -  f\circ \tilde\sigma
\]
is an isomorphism. 
\item[(ii)]
Suppose $q=1$ (i.e., the category $\C(A,\, q)$ is  Tannakian). Then 
\[
P(A,\,q) =\{ \phi:  \widehat{A}\to A \mid \la \phi,\, f(\phi)\ra =1\}.
\] 
Thus, elements of  $P(A,\,q)$ are identified with alternating bimultiplicative maps
$\widehat{A} \times \widehat{A} \to k^\times$ and
\[
P(A,\,q) \cong \wedge^2 A  \cong H^2(\widehat{A},\,k^\times),
\]
cf. \cite[Corollary 3.17]{ENO}. 
\item[(iii)]
Suppose that $\sigma =1$ but $q\neq 1$ (i.e., the category $\C(A,\, q)$ is  symmetric but
not Tannakian). In this case $q\in \widehat{A}$ is a character of order $2$. Let $\langle q \rangle$
denote the subgroup of $\widehat{A}$ generated by $q$. We have
\[
P(A,\,q) \cong  
\begin{cases}
H^2(\widehat{A},\,k^\times) & \text{if  $\langle q \rangle$ is a direct summand in $\widehat{A}$}, \\ 
H^2(\widehat{A},\,k^\times) \times \mathbb{Z}/2\mathbb{Z} & \text{otherwise}. 
\end{cases}
\] 
This agrees with the result of \cite{C} in the case of semisimple Hopf algebras.
\end{enumerate}
\end{example}

\subsection{Description of the Picard crossed module of $\C(A,\,q)$}

Let $\C(A,\,q)$ be a pointed braided fusion category.  By Corollary~\ref{P iso A}
the Picard crossed module of $\C$ is isomorphic to the autoequivalence  crossed module
\[
\mathfrak{A}(\C(A,\,q)) = \big(\Aut^{br}(\Z(\C(A,\,q)); \C(A,\,q) ) ,\,\Aut^{br}(\C(A,\,q))\big) \cong \big(P(A,\,q),
O(A,\,q)\big).
\]
introduced in Section~\ref{def Pic cr mod}. 

By Lemma~\ref{two inductions} the structural homomorphism
\begin{equation}
\label{alpha ind}
\partial: \Pic(\C(A,\, q)) \cong \Aut^{br}(\Z(\C(A,\,q)); \C(A,\,q) )  \to \Aut^{br}(\C(A,\,q)). 
\end{equation}
is given by  restriction of autoequivalences in $\Aut^{br}(\Z(\C(A,\,q));\C(A,\,q))$ 
to $\C(A,\,q)^\rev \subset \Z(\C(A,\,q))$.

Let us describe $\partial$ explicitly.
We already observed that the tensor  subcategory $\C(A,\,q)^\rev
\subset \Z(\C(A,\,q))$  corresponds to the subgroup
$\{(a,\, -\widehat{a}) \mid a\in A \} \subset A \oplus \widehat{A}$.
Given $f \in P(A,\,q)$ we have
\[
\alpha_f(a,\,-\tilde{\sigma}(a)) = (a - f \tilde{\sigma}(a),\,  -(\tilde{\sigma}(a) - \tilde{\sigma}f\tilde{\sigma}(a)).
\]
Hence, 
\begin{equation}
\label{partial}
\partial({f}) = \id_A - f\circ \tilde{\sigma},\quad f\in P(A,\,q).
\end{equation}
Next, for any $g\in O(A,\, q)$ let $\tilde{g}\in O(A\oplus\widehat{A},\,Q)$ be the orthogonal
automorphism induced by $g$, i.e., $\tilde{g}(a,\, \phi) = (g(a),\, \phi\circ g^{-1})$. 
It is straightforward to check the identity
\[
\tilde{g} \circ \alpha_f \circ \tilde{g}^{-1} =\alpha_{({}^g f)},
\]
where \begin{equation}
\label{action}
{}^g f = g\circ f \circ g^{-1}  ,\quad g\in O(A,\,q), \, f\in P(A,\,q).
\end{equation}
Thus, the autoequivalence crossed module of $\C(A,\, q)$ is
\[
\mathfrak{A}(\C(A,\,q)) \simeq (P(A,\, q),\, O(A,\,q))
\]
with structural operations \eqref{partial} and \eqref{action}. 

\subsection{Invariants of $\mathfrak{P}(\C(A,\,q))$}

The kernel and  the cokernel of the homomorphism $\partial$ are important invariants of 
a crossed module. Below we compute the kernel of $\partial$ for the crossed module $\mathfrak{P}(\C(A,\,q))$. We also describe the cokernel of $\partial$ for the crossed module $\mathfrak{P}(\C(A,\,q))$ when $\Z_{sym}(\C(A,\,q))$ is Tannakian. 

As before $A^\perp\subset A$ denotes the kernel of $\sigma$. Note that  
$\C(A^\perp,\, q|_{A^\perp}) = \Z_{sym}(\C(A,\,q))$ is a symmetric fusion category. 

\begin{proposition}
\label{Ker partial}
The group homomorphism \eqref{ker} $$j: \Pic(\C(A^\perp,\, q|_{A^\perp}))  \to  ker(\partial)$$ is an isomorphism. 
\end{proposition}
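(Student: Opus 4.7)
The plan is to use the explicit classification of invertible module categories by pairs $(B,\beta)$ from Corollary~\ref{beta bijection} and Proposition~\ref{B,beta} to parameterize both $\ker(\partial)$ and $\Pic(\Z_{sym}(\C(A,q)))$ by the same set of pairs, and then to verify that $j$ acts as the identity under this parameterization.

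First I would identify $\ker(\partial)$. By Corollary~\ref{Alexei's list}, $\M(B,\beta) \in \ker(\partial)$ iff the orthogonal automorphism $\partial(\M(B,\beta))$ is the identity, which amounts to $\beta(b,c) = \beta(c,b)^{-1}$ for all $b,c \in B$. Combined with identity \eqref{symb} this forces $\sigma(b,c) = 1$ on $B$, i.e.\ $B \subset A^\perp$. Conversely, when $B \subset A^\perp$ the same identity \eqref{symb} makes antisymmetry of $\beta$ automatic. Hence $\ker(\partial)$ is in bijection with pairs $(B,\beta)$ for which $B \subset A^\perp$, with $\beta$ non-degenerate and bimultiplicative, satisfying $\beta(x,x)=q(x)$. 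Applying Proposition~\ref{B,beta} to the symmetric fusion category $\Z_{sym}(\C(A,q)) = \C(A^\perp,q|_{A^\perp})$, its Picard group is parameterized by exactly the same set.

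Next I would verify that $j$ corresponds to the identity map on these parameterizations. Realizing $\M = \M(B,\beta) \in \Pic(\Z_{sym}(\C))$ as $\Z_{sym}(\C)_{\mathcal{A}}$ for the $\beta$-twisted group algebra $\mathcal{A}$ supported on $B$, the definition of $j$ gives $j(\M) = \C \bt_{\Z_{sym}(\C)} \Z_{sym}(\C)_{\mathcal{A}} \simeq \C_{\mathcal{A}}$. The pair recovered from $\C_{\mathcal{A}}$ via formula \eqref{def beta} is still $(B,\beta)$, because the braiding of $\Z_{sym}(\C)$ is the restriction of the braiding of $\C$ and the module associativity $\gamma$ is unchanged by induction. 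That $j$ is a group homomorphism follows from Proposition~\ref{AB}: it gives $j(\Z_{sym}(\C)_{\mathcal{A}} \bt_{\Z_{sym}(\C)} \Z_{sym}(\C)_{\mathcal{B}}) \simeq \C_{\mathcal{A} \otimes \mathcal{B}} \simeq \C_{\mathcal{A}} \bt_\C \C_{\mathcal{B}}$.

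The main obstacle is making rigorous the claim that the pair classifying the induced module category $\C_{\mathcal{A}}$ agrees with the original pair classifying $\Z_{sym}(\C)_{\mathcal{A}}$. This reduces to tracing the formula \eqref{def beta} through the induction, using that both the braiding and the module associativity restrict from $\C$ to $\Z_{sym}(\C)$, so that the same $\beta$ is produced on either side.
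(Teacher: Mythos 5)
There is a genuine gap in your identification of $\ker(\partial)$, and it is fatal as written. From Corollary~\ref{Alexei's list} and \eqref{symb} you conclude that $\partial(\M(B,\beta))=\id$ is equivalent to $\beta(b,c)=\beta(c,b)^{-1}$ on $B$, and hence to $\sigma(b,c)=1$ for all $b,c\in B$; but this last condition says only that $B$ is \emph{isotropic}, which is strictly weaker than $B\subset A^\perp$ (the latter requires $\sigma(b,a)=1$ for all $b\in B$ and \emph{all} $a\in A$). Moreover the equivalence itself is not valid: the three conditions listed in Corollary~\ref{Alexei's list} only constrain $g$ on $B$ and modulo $B$, so the identity satisfying them does not force $\partial(\M(B,\beta))=\id$. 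The determination of $g$ on all of $A$ is the one in the remark following that corollary, $g(a)-a\in B$ and $\beta(b,g(a)-a)=\sigma(b,a)^{-1}$ for all $a\in A$, $b\in B$ (equivalently $\partial(f)=\id_A-f\circ\tilde\sigma$ in the $P(A,q)$ picture), and from it one reads off that $\partial(\M(B,\beta))=\id$ if and only if $B\subset A^\perp$. A concrete counterexample to your criterion: let $A=(\mathbb{Z}/2\mathbb{Z})^4$ with basis $e_1,\dots,e_4$ and $q(x)=(-1)^{x_1x_3+x_2x_4}$, so $q$ is non-degenerate, $A^\perp=0$ and $\partial$ is injective; take $B=\langle e_1,e_2\rangle$ (isotropic, $q|_B=1$) and the non-degenerate alternating form $\beta$ with $\beta(e_1,e_2)=\beta(e_2,e_1)=-1$, $\beta(e_i,e_i)=1$. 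This pair satisfies your kernel condition, yet $\M(B,\beta)$ is a nontrivial element of $\Pic(\C(A,q))$ (it has $4$ simple objects, not $16$) and therefore does not lie in $\ker(\partial)$; with your parameterization the proposition would be false here, since $\Pic(\C(A^\perp,q|_{A^\perp}))$ is trivial.

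Once the kernel condition is corrected to $B\subset A^\perp$, the two sides are indeed parameterized by the same pairs, and the rest of your plan (realizing $\M(B,\beta)$ as modules over a $\beta$-twisted group algebra supported on $B$, computing $j(\M)\simeq\C_{\mathcal{A}}$, and tracing \eqref{def beta} through the induction) is a reasonable alternative route; note, though, that this matching of pairs under $j$ is exactly the point you flag as needing a rigorous check, so it is not yet complete. The paper avoids the $(B,\beta)$ language altogether: it works with $f\in P(A,q)$, observes that $f\in\ker(\partial)$ means $f\circ\tilde\sigma=0$, deduces $\tilde\sigma\circ f=0$ using $f^{-1}=f^*$ (Remark~\ref{inverse formula}), and then lets $f$ descend to an element of $P(A^\perp,q|_{A^\perp})$, which gives an explicit inverse to $j$; the dualization step there is precisely what replaces the incorrect shortcut in your first paragraph.
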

\begin{proof}
The homomorphism $j$ can be explicitly described as follows. 
For $g\in P(A^\perp,\, q|_{A^\perp})$ the image  $j(g)\in P(A,\, q)$ 
is the composition
\[
j(g) : \widehat{A}\to \widehat{A^\perp} \xrightarrow{g}  A^\perp \hookrightarrow A,
\]
where the first arrow is the restriction of a character and the last arrow is the embedding. 

We will construct the inverse homomorphisms
\[
i: \mbox{Ker}(\partial) \to  \Pic(\C(A^\perp,\, q|_{A^\perp}))  
\] to $j$.
Let  $f \in \mbox{Ker}(\partial)$. Then $f \circ  \tilde{\sigma}=0$, i.e., $f|_{\widehat{A/A^\perp}} =0$.
By Remark~\ref{inverse formula} we also have $f^*\in \mbox{Ker}(\partial)$ and, hence,  $f^* \circ  \tilde{\sigma}=0$.
Taking the dual we get , $\tilde{\sigma} \circ f =0$, i.e., $f(\widehat{A}) \subset A^\perp$.  
Hence $f$ descends to a homomorphism
\[
i(f): \widehat{A^\perp} \cong  \widehat{A} /  \widehat{(A/A^\perp)}  \to A^\perp, 
\]
which is easily seen to be in $P(A^\perp,\, q|_{A^\perp})$. 
\end{proof}

Now let $\C(A,\,q))$ be a pointed category 
whose symmetric center $\Z_{sym}(\C(A,\,q))$ is Tannakian.
 In other words let $q|_{A^\perp}=1$. 
 Note that in this case the form $q$ descents to $A/A^\perp$ (we denote the descendent form by $\tilde q$). 
Below we describe the kernel of the homomorphism \eqref{coc} for $\C(A,\,q)$.
\bpr
Let $q|_{A^\perp}=1$. Then the kernel of the canonical homomorphism \eqref{coc} 
$$coker\big(\Pic(\C(A,\, q)) \xrightarrow{\partial} \Aut^{br}(\C(A,\,q))\big)\to Aut(A^\perp)$$ 
is isomorphic to the abelian group $Hom(A/A^\perp,A^\perp)$. In other words, the cokernel 
$C=coker\big(\Pic(\C(A,\, q)) \xrightarrow{\partial} \Aut^{br}(\C(A,\,q))\big)$ fits into an exact sequence
\beq\lb{ext}\xymatrix{0\ar[r] & Hom(A/A^\perp,A^\perp)\ar[r] & C \ar[r] & Aut(A^\perp)}.\eeq
\epr
\bpf
From the commutativity of the diagram
$$
\xymatrix{P(A,\,q)\ar[r]^\partial \ar[d] & O(A,\,q)\ar[d]\\ P(A/A^\perp,\,\tilde q)\ar[r]_{\partial}^\simeq & O(A/A^\perp,\,\tilde q)}
$$
it follows that $coker\big(P(A,\, q) \xrightarrow{\partial} O(A,\,q)\big)$ coincides with 
$$ker\big(O(A,\,q)\to O(A/A^\perp,\,\tilde q)\big)/im(\partial)\cap ker\big(O(A,\,q)\to O(A/A^\perp,\,\tilde q)\big).$$
Now $ker\big(O(A,\,q)\to O(A/A^\perp,\,\tilde q)\big)$ consists of automorphisms of the form $id_A + \phi$ for $\phi\in Hom(A,A^\perp)$. Indeed any element of $ker\big(O(A,\,q)\to O(A/A^\perp,\,\tilde q)\big)$ must have this form and conversely any automorphisms of this form preserves $q$:
$$q(a +\phi(a)) = q(a)q(\phi(a))\sigma(a,\phi(a)) = q(a).$$
Note that composition of automorphisms induces the following group operation on $Hom(A,A^\perp)$:
$$\phi*\psi = \phi+\psi+\phi\circ\psi.$$
It is straightforward that $C=\{\phi\in Hom(A,A^\perp)|\ id_A + \phi$ is invertible $\}$ with the group operation $*$ fits into an exact sequence \eqref{ext}.

All we need to show now is that the intersection $/im(\partial)\cap ker\big(O(A,\,q)\to O(A/A^\perp,\,\tilde q)\big)$ is trivial. Assume that $\partial(f) = id_A + \phi$ for $\phi\in Hom(A,A^\perp)$. Then $\phi = -f\circ\tilde\sigma$ so that $im(f)\subset A^\perp$. We also have $\partial(f^*) = id_A + \psi$ for $\psi\in Hom(A,A^\perp)$, which implies that $im(f^*)\subset A^\perp$. Then 
$$\phi = -f\circ\tilde\sigma = -(\tilde\sigma\circ f^*)^* = 0.$$
\epf

\bibliographystyle{ams-alpha}

\end{document}